\documentclass[11pt]{article}
\usepackage{fullpage}
\usepackage{amsmath}
\usepackage{amsfonts}
\usepackage{amsthm}
\usepackage{graphicx}
\usepackage{subcaption}
\usepackage{framed}
\usepackage{bm}

\newtheorem{proposition}{Proposition}

\newtheorem{definition}{Definition}

\title{Jet Lag Recovery: Synchronization of Circadian Oscillators as a Mean Field Game}
\author{
	Ren\'{e} Carmona\thanks{Operations Research and Financial Engineering, Princeton University, Partially supported by NSF \#DMS-1716673 and ARO \#W911NF-17-1-0578} \and Christy V. Graves\thanks{Program in Applied and Computational Mathematics, Princeton University, Partially supported by NSF GRFP}\\
}

\begin{document}

\maketitle

\begin{abstract}
	The Suprachiasmatic Nucleus (SCN) is a region in the brain that is responsible for controlling circadian rhythms. The SCN contains on the order of $10^4$ neuronal oscillators which have a preferred period slightly longer than $24$ hours. The oscillators try to synchronize with each other as well as responding to external stimuli such as sunlight exposure.  A mean field game model for these neuronal oscillators is formulated with two goals in mind: 1) to understand the long time behavior of the oscillators when an individual remains in the same time zone, and 2) to understand how the oscillators recover from jet lag when the individual has traveled across time zones. In particular, we would like to study the claim that jet lag is worse after traveling east than west. Finite difference schemes are used to find numerical approximations to the mean field game solutions. Numerical results are presented and conjectures are posed. The numerics suggest the time to recover from jet lag is about the same for east versus west trips, but the cost the oscillators accrue while recovering is larger for eastward trips.
\end{abstract}

\section{Introduction}
\hspace{5mm} Circadian rhythm refers to the oscillatory behavior of certain biological processes occurring with a period close to $24$ hours. Examples of circadian rhythms in animals include sleep/wake patterns, eating schedules, bodily temperatures, hormone production, and brain activity. These oscillations can be entrained to the 24 hour cycle of sunlight exposure. Abrupt disruptions of such circadian rhythms can occur, such as when an individual travels across time zones, resulting in jet lag.

The Suprachiasmatic Nucleus (SCN) is a region in the brain that is responsible for controlling circadian rhythms \cite{bio1}\cite{bio2}. The SCN contains on the order of $10^4$ neuronal oscillator cells, each of which has a preferred frequency corresponding to a period slightly longer than 24 hours \cite{bio3}. We model the oscillators as rational players wishing to synchronize with each other as well as the natural $24$ hour sunlight cycle, while minimizing their effort. Since there are a large number of SCN cells, we consider the limit as the number of players tends to infinity and view the game from the perspective of mean field games (MFG). Mean field games were proposed independently by Lasry and Lions \cite{lasry2007mean}, and Caines and his collaborators \cite{huang2006large}, to approximate games with a large number of players with symmetric interactions of a mean field nature. Finite difference schemes for numerically solving the PDE formulations of mean field games are discussed in the works of Achdou and collaborators \cite{achdou2013finite}\cite{achdou2013mean}\cite{achdou2010mean}\cite{achdou2012iterative}.

The goal of this study is to understand the behavior of the oscillators in two settings: 1) the long time behavior of the oscillators for an individual that is entrained to the 24 hour light/dark rhythm, and 2) how the oscillators resynchronize to a shifted 24 hour light/dark rhythm after travel across time zones.

This project was inspired by the work of Lu, Klein-Carde\~{n}a, Lee, Antonsen, Girvan, and Ott, in which they formulated a model of jet lag for SCN oscillators using the classical Kuramoto model \cite{lu2016resynchronization}. In their model, each oscillator has a random preferred frequency, and their phases evolve forward in time according to deterministic coupled ODEs. By making an ansatz and turning to the limit as the number of oscillators tends to infinity, they reduce the dynamics to an ODE for a compex order parameter. They are able to show a larger recovery time for eastward travel. Our model differs in that we use a game theoretic approach: the oscillators choose their controls to minimize a cost objective.

Other relevant work includes that of Yin, Mehta, Meyn, and Shanbhag \cite{yin2012synchronization}, in which they formulated a mean field game model for the synchronization of oscillators. In their model, the oscillators choose their control to minimize a cost objective which encourages synchronization of the oscillators with each other. There is no external forcing term, such as the natural 24 hour light/dark cycle, which is relevant for SCN oscillators.

Our main finding from the numerical results is that the time to recover from jet lag is about the same for east versus west travels. However, there is a larger cost associated with recovery from jet lag after traveling east.

Our problem formulation is described in Section \ref{sec:Problem_Formulation}. Our model of long time behavior of the oscillators is provided in Section \ref{sec:Stationary_Solutions}, while Section \ref{sec:Recovery_Problem} provides two models for resynchronization after travel. A comment on existence and uniqueness for these models is provided in Section \ref{sec:Existence_Uniqueness}. Jet lag recovery is described in Section \ref{sec:Jet_Lag_Recovery} where we provide a few notions of jet lag recovery time and jet lag recovery cost.

A finite difference approach is used to find numerical approximations to the solutions. The numerical methods are described in Section \ref{sec:Numerical_Methods} and results are presented in Section \ref{sec:Numerical_Results}. From these numerics, we have formulated conjectures in Section \ref{sec:Conjectures}. Section \ref{sec:Conclusion} concludes the paper.

\section{Model Formulation}\label{sec:Problem_Formulation}
\subsection{$N$ Player Game Formulation}
\hspace{5mm} We model the SCN as $N$ coupled oscillators, $i \in \{1, \dots N \}$, with phases $\Theta^i_t$ in the periodic domain $[0,2\pi)$. Without any external stimuli, SCN oscillators are believed to have an intrinsic frequency $\omega^i$ corresponding to a period slightly longer than $24$ hours \cite{bio3}. We will take the frequency to be the same for all of the oscillators: $\omega^i=\omega_0,\ \forall i  \in \{1, \dots N \}$. Note that this differs from the formulation in \cite{lu2016resynchronization}, where the $\omega^i$ are taken to be random. We model oscillator $i$ as advancing it's phase, $\Theta^i_t$, forward in time according to the It\^{o} equation:
\begin{equation*}
d\Theta^i_t=(\omega_0+\alpha^i_t)dt+\sigma dB^i_t,
\end{equation*}
where $B^i_t$ are independent standard Wiener processes. The control of oscillator $i$, $\bm{\alpha^i}=(\alpha_t^i)_{t\geq0} \in \mathbb{A}$, is chosen to minimize the long run average cost:
		\begin{equation*}
		J^{\mu^N}(\bm{\alpha}):=\limsup_{T \rightarrow \infty} \frac{1}{T}\int\limits_0^T \left(\frac{1}{2} \alpha_t^2+K\bar{c}^N(\Theta^i_t,\mu^N(t,\cdot))+Fc_{sun}(t,\Theta^i_t,\rho(t)) \right) dt,
		\end{equation*}
where $\mathbb{A}$ is the set of locally square integrable stochastic processes that are adapted to $B^i_t$. $\mu^N(t,\cdot))=\frac{1}{N}\sum_{j=1}^{N}\delta_{\Theta^i_t}$ is the empirical measure of the $N$ oscillators at time $t$. The cost function $\bar{c}^N$ encourages synchronization of the oscillators with each other through their empirical measure. $\bar{c}^N$ is given by:
		\begin{equation*}
		\bar{c}^N(\theta,\nu)=\frac{1}{2}\int_0^{2\pi}\sin^2\left(\frac{\theta'-\theta}{2}\right) d\nu(\theta').
		\end{equation*}
The cost function $c_{sun}$ encourages alignment of the oscillators with the natural 24 hour sunlight cycle and is given by:
		\begin{equation*}
		c_{sun}(t,\theta,\rho(t))=\frac{1}{2}\sin^2\left(\frac{\omega_St+\rho(t)-\theta}{2}\right),
		\end{equation*}
where $\omega_S=\frac{2\pi}{24}$ radians per hour is the frequency of the 24 hour sunlight cycle, and $\rho(t):\mathbb{R}^{+} \rightarrow [0,2\pi)$ is a phase shift accounting for the time zone angle at time $t$. $\rho(t)=p$ is used for an individual that stays in their time zone forever, whereas $\rho(t)$ increases for eastward travel and decreases for westward travel. The two constants, $K, F \geq 0$ are used to weigh the three components of the cost.

\subsection{Mean Field Limit Formulation}
\hspace{5mm} By considering the limit $N \rightarrow \infty$, we reformulate the problem as a mean field game:

\begin{enumerate}
	\item Fix a deterministic flow of measures, $\bm{\mu}=(\mu(t,\cdot))_{t\geq0}$.
	\item Solve the standard control problem of finding $\bm{\alpha^{\bm{\mu}}}=(\alpha_t^{\bm{\mu}})_{t\geq0}\in\mathbb{A}$ to minimize the long run average (LRA) cost:
		\begin{equation*}
		J^{\bm{\mu}}(\bm{\alpha})=\limsup_{T \rightarrow \infty} \frac{1}{T}\int_0^T \left(\frac{1}{2} \alpha_t^2+K\bar{c}(\Theta_t^{\bm{\alpha}},\mu(t,\cdot))+Fc_{sun}(t,\Theta_t,\rho(t)) \right) dt,
		\end{equation*}
		\begin{equation*}
		\bar{c}(\theta,\nu)=\frac{1}{2}\int_0^{2\pi}\sin^2\left(\frac{\theta'-\theta}{2}\right) d\nu(\theta'),
		\end{equation*}
		\begin{equation*}
		c_{sun}(t,\theta,\rho(t))=\frac{1}{2}\sin^2\left(\frac{\omega_St+\rho(t)-\theta}{2}\right),
		\end{equation*}
		subject to the dynamical constraint:
				\begin{equation*}
				d\Theta_t^{\bm{\alpha}}=(\omega_0+\alpha_t)dt+\sigma dB_t.
				\end{equation*}
	\item Find a fixed point such that $\mu(t,\theta)=\mathcal{L}(\Theta_t^{\bm{\alpha}^{\bm{\mu}}}),\ \forall t$.
\end{enumerate}	
The convergence of the $N$ player game to the mean field game is outside the focus of this report. For the remainder of the paper, we will work with the mean field game.

\subsection{Change of Variables}
\hspace{5mm} For now, let $\rho(t)=p$ be constant. Note that the time dependency of $c_{sun}$ can be removed if we make the change of variables: $\Phi_t=\Theta_t-\omega_St$, where $\Phi_t$ is also in the periodic domain $[0,2\pi)$. Now the optimal control part of the mean field game is the following. A generic oscillator evolves it's phase according to:
	\begin{equation}
	d\Phi_t=(\omega_0-\omega_S+\alpha_t)dt+\sigma dB_t,
	\label{eq:dynamics_phi}
	\end{equation}
where $\bm\alpha\in\mathbb{A}$ is chosen to minimize the long run average (LRA) cost:
		\begin{equation}
		J^{\bm{\mu}}(\bm\alpha)=\limsup_{T \rightarrow \infty} \frac{1}{T}\int_0^T \left(\frac{1}{2} \alpha_t^2+K\bar{c}(\Phi_t,\mu(t,\cdot))+Fc_{sun}(\Phi_t,p) \right) dt,
		\label{eq:cost_phi}
		\end{equation}
where:
		\begin{equation}
		\bar{c}(\phi,\nu)=\frac{1}{2}\int_0^{2\pi}\sin^2\left(\frac{\phi'-\phi}{2}\right) d\nu(\phi'),
		\label{eq:cost_cbar_phi}
		\end{equation}
		\begin{equation}
		c_{sun}(\phi,p)=\frac{1}{2}\sin^2\left(\frac{p-\phi}{2}\right).
		\label{eq:cost_csun_phi}
		\end{equation}
		
Now that the cost functions no longer depend on $t$, we can look at the long time behavior of the oscillators when an individual remains in the same time zone angle $p$. Thus, the model formulation in the new variable, $\Phi$, will be useful to address our first problem of interest: to understand the long time behavior of the oscillators when an individual remains in the same time zone. In fact, this formulation will also be used to understand the second problem of interest as well: to understand how the oscillators recover from jet lag when the individual has traveled across time zones. Thus, for the remainder of the paper, we will refer to $\Phi_t$ as the phase of a representative oscillator at time $t$.
		
\section{Long Time Behavior in One Time Zone}\label{sec:Stationary_Solutions}
\hspace{5mm} When an individual remains in the same time zone angle $p$ for a long time, we expect the distribution of their oscillators to be a stationary solution to the mean field game posed in equations (\ref{eq:dynamics_phi})-(\ref{eq:cost_csun_phi}). Using the analytic (PDE) approach to solving mean field games, this stationary solution is given by $(\mu^*_p(\phi),V^*_p(\phi),\lambda^*_p)$ solving the coupled ergodic Hamilton-Jacobi-Bellman (HJB) and Poisson equations:
	\begin{equation*}
	(\omega_0-\omega_S) \partial_\phi V^*_p-\frac{1}{2}(\partial_\phi V^*_p)^2+\frac{\sigma^2}{2}\partial_{\phi \phi}^2 V^*_p=\lambda^*_p-K\bar{c}(\phi,\mu^*_p(\cdot))-Fc_{sun}(\phi,p),
	\end{equation*}
	\begin{equation*}
	(\omega_0-\omega_S) \partial_{\phi} \mu^*_p- \partial_{\phi}\left[\mu^*_p(\partial_\phi V^*_p) \right]- \frac{\sigma^2}{2} \partial_{\phi \phi}^2 \mu^*_p=0.
	\end{equation*}
Since $V^*_p$ is defined up to a constant, without loss of generality, we add the constraint $\int_0^{2\pi}V^*_p=0$. Also, $\mu_p^*$ needs to be a probability measure. We require $\mu^*_p(\phi)\geq0$ and $\int_0^{2\pi}d\mu^*_p(\phi)=1$. Note that $\lambda_p^*$ is given by the ergodic average cost, i.e.:
\begin{equation*}
\lambda_p^*=\int_0^{2\pi} \left[\frac{1}{2}(\partial_\phi V^*_p)^2+K\bar{c}(\phi,\mu^*_p(\cdot))+Fc_{sun}(\phi,p) \right] d\mu^*_p(\phi).
\end{equation*}

Let $(\mu^*(\phi),V^*(\phi),\lambda^*)$ without subscripts denote a solution when $p=0$. It is easy to check that for $p \neq 0$, $(\mu^*_p(\phi),V^*_p(\phi),\lambda^*_p)=(\mu^*(\phi-p),V^*(\phi-p),\lambda^*)$ is a solution. Thus, without loss of generality, let $p=0$ and pose the \textit{Ergodic Mean Field Game Problem}.
\begin{framed}
\textit{Ergodic Mean Field Game Problem}: Find $(\mu^*(\phi),V^*(\phi),\lambda^*)$ solving
	\begin{equation}
	(\omega_0-\omega_S) \partial_\phi V^*-\frac{1}{2}(\partial_\phi V^*)^2+\frac{\sigma^2}{2}\partial_{\phi \phi}^2 V^*=\lambda^*-K\bar{c}(\phi,\mu^*(\cdot))-Fc_{sun}(\phi,0),
	\label{eq:ergodic_HJB}
	\end{equation}
	\begin{equation}
	(\omega_0-\omega_S) \partial_{\phi} \mu^*- \partial_{\phi}\left[\mu^*(\partial_\phi V^*) \right]- \frac{\sigma^2}{2} \partial_{\phi \phi}^2 \mu^*=0,
	\label{eq:Poisson}
	\end{equation}
		\begin{equation*}
		\bar{c}(\phi,\mu^*(\cdot))=\frac{1}{2}\int_0^{2\pi}\sin^2\left(\frac{\phi'-\phi}{2}\right) d\mu^*(\phi'),
		\end{equation*}
		\begin{equation*}
		c_{sun}(\phi,0)=\frac{1}{2}\sin^2\left(\frac{\phi}{2}\right),
		\end{equation*}
	\begin{equation}
	\int_0^{2\pi}V^*(\phi)d\phi=0,
	\label{eq:normalization}
	\end{equation}
	\begin{equation}
	\mu^*(\phi)\geq0,\ \int_0^{2\pi}d\mu^*(\phi)=1.
	\label{eq:prob_measure}
	\end{equation}
\end{framed}

The \textit{Ergodic Mean Field Game Problem} describes the long time behavior of the oscillators for an individual that remains in time zone angle $p=0$. We emphasize that if we can solve the \textit{Ergodic Mean Field Game Problem} as written for $p=0$, then we have simultaneously solved for the long time behavior in any time zone angle $p$, since, as noted above, we can take $\mu^*(\phi-p)$. We define \textit{entrainment} as the following.

\begin{definition}
An individual is \textit{entrained} to the time zone angle $p$ if the phases of their oscillators are distributed according to $\mu^*(\phi-p)$, where $\mu^*(\phi)$ solves the \textit{Ergodic Mean Field Game Problem}.
\end{definition}
In other words, $\mathcal{L}(\Phi)=\mu^*(\phi-p)$. Note that the optimal control is given by $\alpha^*(\phi-p):=-\partial_\phi V^*(\phi-p)$.

\subsection{Special Case: $K=0$, $\omega_0=\omega_S$}
In this section, we consider the special case $K=0$ and $\omega_0=\omega_S$. It will be useful to define some notation related to Mathieu's differential equation:
\begin{equation}
	\partial^2_{xx}f+[a-2q\cos(2x)]f=0.
	\label{eq:Mathieu}
\end{equation}
Let $\mathbb{M}(a,q,x)$ denote the unique even solution to equation (\ref{eq:Mathieu}) with the normalization constraint $\mathbb{M}(a,q,0)=1$ and let $\mathcal{C}(q)$ be defined such that $\mathbb{M}(\mathcal{C}(q),q,x)$ is periodic with period $\pi$. We have the following result.

\begin{proposition}
	In the special case $K=0$ and $\omega_0=\omega_S$, the solution to the \textit{Ergodic Mean Field Game Problem}, denoted $(\mu^*_{K_0}(\phi),V^*_{K_0}(\phi),\lambda^*_{K_0})$ in this case, is given by:
	\begin{equation*}
		\partial_\phi V^*_{K_0}(\phi)=-\sigma^2\partial_\phi \left[\log\left(\mathbb{M}\left(\mathcal{C}\left(-\frac{2F}{\sigma^4}\right),-\frac{2F}{\sigma^4},\frac{\phi}{2}\right)\right)\right],
	\end{equation*}
	\begin{equation*}
		\lambda^*_{K_0}=\frac{F}{2}+\frac{\sigma^2}{8}\mathcal{C}\left(-\frac{2F}{\sigma^4}\right),
	\end{equation*}
	\begin{equation*}
		\mu^*_{K_0}(\phi)=\frac{1}{c_1}\left[\mathbb{M}\left(\mathcal{C}\left(-\frac{2F}{\sigma^4}\right),-\frac{2F}{\sigma^4},\phi/2\right)\right]^2,
	\end{equation*}
	where $c_1$ is the normalization constant:
	\begin{equation*}
		\int_0^{2\pi}\left[\mathbb{M}\left(\mathcal{C}\left(-\frac{2F}{\sigma^4}\right),-\frac{2F}{\sigma^4},\phi/2\right)\right]^2 d\phi.
	\end{equation*}
\end{proposition}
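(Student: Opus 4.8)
The plan is to exploit the two simplifications that $K=0$ and $\omega_0=\omega_S$ produce. Setting $K=0$ removes the measure-coupling term $\bar c$ from the running cost, so the HJB equation (\ref{eq:ergodic_HJB}) decouples from $\mu^*$ and becomes a scalar ergodic ODE in $V^*$ alone; setting $\omega_0=\omega_S$ kills the first-order drift term. What remains to solve is the ergodic eigenvalue problem
\begin{equation*}
\frac{\sigma^2}{2}\,\partial^2_{\phi\phi}V^*-\frac12\bigl(\partial_\phi V^*\bigr)^2=\lambda^*-F\,c_{sun}(\phi,0),
\end{equation*}
for the pair $(V^*,\lambda^*)$ with $V^*$ periodic on $[0,2\pi)$ and $\int_0^{2\pi}V^*=0$, after which the Poisson equation (\ref{eq:Poisson}) for $\mu^*$ is linear and can be solved explicitly.

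The key step is the Hopf--Cole (logarithmic) transformation $\psi:=e^{-V^*/\sigma^2}$, equivalently $V^*=-\sigma^2\log\psi$, which is exactly the change of variable implicit in the stated formula for $\partial_\phi V^*_{K_0}$. Substituting and simplifying, the quadratic gradient term cancels identically and the equation collapses to the linear form $-\tfrac{\sigma^4}{2}\,\psi''=\bigl(\lambda^*-F\,c_{sun}(\phi,0)\bigr)\psi$. Writing $F\,c_{sun}(\phi,0)=\tfrac{F}{4}(1-\cos\phi)$ and rescaling by $x=\phi/2$ so that $\cos\phi=\cos 2x$, this is precisely Mathieu's equation (\ref{eq:Mathieu}), with $a$ an affine function of $\lambda^*$ and $q$ proportional to $F/\sigma^4$; the exact constants follow by matching coefficients. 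I would then identify $\psi(\phi)$ with the even, unit-normalized Mathieu solution $\mathbb{M}(a,q,\phi/2)$, the evenness being natural because $c_{sun}(\cdot,0)$ is even about $\phi=0$.

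It remains to pin down the eigenvalue and to recover $\mu^*$. Because $V^*$, hence $\psi$, must be $2\pi$-periodic in $\phi$, the function $f(x)=\psi(2x)$ must be $\pi$-periodic, which by the definition of $\mathcal{C}(q)$ forces $a=\mathcal{C}(q)$; solving the affine relation $a=a(\lambda^*)$ then yields the claimed formula for $\lambda^*_{K_0}$. Among the discrete set of periodic characteristic values I would select the principal (lowest) one, so that the corresponding even Mathieu function is nodeless and $\psi>0$, making $V^*=-\sigma^2\log\psi$ well defined. Finally, with $\omega_0=\omega_S$ the Poisson equation rewrites as a vanishing-divergence condition $\partial_\phi\!\bigl[\mu^*\partial_\phi V^*+\tfrac{\sigma^2}{2}\partial_\phi\mu^*\bigr]=0$, i.e.\ constant probability flux; substituting $\partial_\phi V^*=-\sigma^2\psi'/\psi$ shows that the ansatz $\mu^*\propto\psi^2$ makes the drift and diffusion fluxes cancel termwise, giving zero flux and a periodic, nonnegative density. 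Normalizing by $c_1=\int_0^{2\pi}\psi^2$ enforces $\int_0^{2\pi}\mu^*=1$ and produces the stated $\mu^*_{K_0}$.

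The computations above are individually routine; the genuine obstacle is the selection step. One must argue that the periodicity constraint singles out the correct branch of Mathieu functions — namely the principal even solution at the characteristic value $\mathcal{C}(q)$ — and that this branch is nodeless, since positivity of $\psi$ is precisely what is needed both to invert the Hopf--Cole map and to obtain a legitimate probability density. Verifying that the reported $\lambda^*_{K_0}$ coincides with the ergodic-cost integral $\int_0^{2\pi}[\tfrac12(\partial_\phi V^*)^2+Fc_{sun}]\,d\mu^*$ provides a useful consistency check on the constants tracked through the $\phi\mapsto\phi/2$ rescaling.
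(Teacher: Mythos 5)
Your proposal follows essentially the same route as the paper's proof: the Cole--Hopf substitution $W=e^{-V^*/\sigma^2}$ linearizes the ergodic HJB (with the drift killed by $\omega_0=\omega_S$ and the coupling killed by $K=0$) into a Mathieu equation in $x=\phi/2$, periodicity and positivity select the principal even branch at the characteristic value $\mathcal{C}(q)$, and the zero-flux form of the Poisson equation is verified by the ansatz $\mu^*\propto W^2$ with normalization $c_1$; your explicit attention to choosing the \emph{nodeless} even branch (evenness alone does not give positivity) is a point the paper passes over. One caveat: carrying your (correct) identity $Fc_{sun}(\phi,0)=\tfrac{F}{4}(1-\cos\phi)$ through the coefficient matching yields $q=-F/\sigma^4$ and $\lambda^*_{K_0}=\tfrac{F}{4}+\tfrac{\sigma^4}{8}\mathcal{C}(-F/\sigma^4)$, which differs by factors of two (and a power of $\sigma$) from the constants displayed in the proposition and in the paper's intermediate equation, so you cannot simply assert that ``the exact constants follow'' --- you would need to reconcile this discrepancy explicitly.
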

\begin{proof}
The ergodic HJB can be linearized through the Cole-Hopf transformation:
\begin{equation*}
	W_{K_0}(\phi)=e^{-V^*_{K_0}(\phi)/\sigma^2}.
\end{equation*}
Thus, when $K=0$ and $\omega_0=\omega_S$, equation (\ref{eq:ergodic_HJB}) becomes:
\begin{equation*}
\partial^2_{\phi \phi}W_{K_0}+\frac{2}{\sigma^4} \left[\lambda^*_{K_0}-\frac{F}{2}+\frac{F}{2} \cos\left(\phi\right) \right]W_{K_0}=0.
\end{equation*}
There are even and odd solution to this equation. Since $W_{K_0}(\phi)=e^{-V^*_{K_0}(\phi)/\sigma^2}>0$, we want an even solution. Thus, a periodic solution with period $2\pi$ is given by:
\begin{equation*}
	W_{K_0}(\phi)=c_2\mathbb{M}\left(\mathcal{C}\left(-\frac{2F}{\sigma^4}\right),-\frac{2F}{\sigma^4},\phi/2\right),
\end{equation*}
where $\lambda^*_{K_0}$ solves:
\begin{equation*}
-\frac{4(F-2\lambda^*_{K_0})}{\sigma^4}=\mathcal{C}\left(-\frac{2F}{\sigma^4}\right).
\end{equation*} 
The constant $c_2>0$ can be chosen to satisfy the normalization constraint (\ref{eq:normalization}) and does not affect $\partial_\phi V^*_{K_0}$. Finally, it can be verified that $\mu^*_{K_0}(\phi)=\frac{W^2_{K_0}(\phi)}{\int_0^{2\pi}W^2_{K_0}(\phi)d\phi}$ solves equations (\ref{eq:Poisson}) and (\ref{eq:prob_measure}).
\end{proof}

\subsection{Perturbation of Special Case: Small Interaction}
In this section, we consider the case $\omega_0=\omega_S$ and small $K>0$. Borrowing from the ideas of Chan and Sircar \cite{chan_sircar}, we expand our solution $(\mu^*(\phi),V^*(\phi),\lambda^*)$ to the \textit{Ergodic Mean Field Game Problem} by the parameter $K$:
\begin{equation}
\begin{split}
\mu^*(\phi)&=\mu^*_{K_0}(\phi)+K\mu^*_{K_1}(\phi)+K^2\mu^*_{K_2}(\phi)+ \dots \\
V^*(\phi)&=V^*_{K_0}(\phi)+KV^*_{K_1}(\phi)+K^2V^*_{K_2}(\phi)+ \dots \\
\lambda^*&=\lambda^*_{K_0}+K\lambda^*_{K_1}+K^2\lambda^*_{K_2}+ \dots
\end{split}
\label{eq:expansion}
\end{equation}
We derived the 0th order solution, $(\mu^*_{K_0}(\phi),V^*_{K_0}(\phi),\lambda^*_{K_0})$, in the previous section. Now we consider the 1st order correction, $(\mu^*_{K_1}(\phi),V^*_{K_1}(\phi),\lambda^*_{K_1})$. We have the following result.

\begin{proposition}
	If $\omega_0=\omega_S$, the first order correction for small $K>0$ is given by:
	\begin{equation}
		\partial_\phi V^*_{K_1}(\phi)=\frac{2}{\sigma^2\mu^*_{K_0}(\phi)} \left[c_{K_1}+\int_0^\phi  \mu^*_{K_0}(\phi')\left(\lambda^*_{K_1}-\left(\frac{1}{2}\sin^2\left(\frac{\cdot}{2}\right)* \mu^*_{K_0}\right)(\phi')\right)d\phi'\right],
	\label{eq:V_1}
	\end{equation}
	where
	\begin{equation}
		c_{K_1}=-\frac{\int_0^{2\pi}\int_0^\phi \frac{\mu^*_{K_0}(\phi')}{ \mu^*_{K_0}(\phi)}\left(\lambda^*_{K_1}-\left(\frac{1}{2}\sin^2\left(\frac{\cdot}{2}\right)* \mu^*_{K_0}\right)(\phi')\right)d\phi'd\phi}{\int_0^{2\pi}\frac{1}{\mu^*_{K_0}(\phi)}d\phi},
	\label{eq:c_K_1}
	\end{equation}
	and
	\begin{equation}
		\lambda^*_{K_1}=\int_0^{2\pi}\left(\frac{1}{2}\sin^2\left(\frac{\cdot}{2}\right)*\mu^*_{K_0}\right)(\phi)d\phi,
	\label{eq:lambda_1}
	\end{equation}
	where $*$ denotes convolution. Finally, the 1st order correction to the density is given by $\mu^*_{K_1}(\phi)=\Gamma(\phi) W_{K_0}(\phi)$ where $\Gamma(\phi)$ solves:
	\begin{equation}
	\frac{\sigma^2}{2}\left(\Gamma \partial^2_{\phi \phi}W_{K_0}-W_{K_0}\partial^2_{\phi \phi}\Gamma \right)=\frac{2}{\sigma^2}\mu^*_{K_0}\left(\lambda^*_{K_1}-\left(\frac{1}{2}\sin^2\left(\frac{\cdot}{2}\right)*\mu^*_{K_0}\right)(\phi)\right).
	\label{eq:gamma}
	\end{equation}
\end{proposition}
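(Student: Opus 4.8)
The plan is to substitute the expansions (\ref{eq:expansion}) into the $\omega_0=\omega_S$ versions of (\ref{eq:ergodic_HJB}) and (\ref{eq:Poisson}), exploit the fact that $\bar c(\phi,\cdot)$ is linear in its measure argument, and collect the terms of order $K^1$. Because $\bar c$ enters the cost with an explicit prefactor $K$, the interaction first appears in the HJB at order $K^1$ as $-\bar c(\phi,\mu^*_{K_0})$, giving the linear corrector equation $\tfrac{\sigma^2}{2}\partial_{\phi\phi}^2 V^*_{K_1} - (\partial_\phi V^*_{K_0})(\partial_\phi V^*_{K_1}) = \lambda^*_{K_1} - \bar c(\phi,\mu^*_{K_0})$, whose left-hand side is exactly the generator $\mathcal{A} = \tfrac{\sigma^2}{2}\partial_{\phi\phi}^2 - (\partial_\phi V^*_{K_0})\partial_\phi$ of the zeroth-order optimally controlled diffusion (drift $-\partial_\phi V^*_{K_0}$). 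The $O(K^1)$ part of (\ref{eq:Poisson}) yields a linear equation for $\mu^*_{K_1}$ whose source couples to $\partial_\phi V^*_{K_1}$.

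Second, I would solve the first-order HJB for $u:=\partial_\phi V^*_{K_1}$. Rewriting it through the zeroth-order Cole–Hopf variable $W_{K_0}=e^{-V^*_{K_0}/\sigma^2}$ and using $\partial_\phi(W_{K_0}^2) = -\tfrac{2}{\sigma^2}(\partial_\phi V^*_{K_0})W_{K_0}^2$ puts the equation into the exact conservation form $\partial_\phi\!\left(W_{K_0}^2\,u\right) = \tfrac{2}{\sigma^2}W_{K_0}^2\bigl(\lambda^*_{K_1} - \bar c(\phi,\mu^*_{K_0})\bigr)$. Since the zeroth-order proposition gives $W_{K_0}^2 = c_1\mu^*_{K_0}$, dividing out $c_1$, integrating from $0$ to $\phi$, and dividing by $\mu^*_{K_0}(\phi)$ reproduces (\ref{eq:V_1}), with the integration constant $c_{K_1}$. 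That constant is then pinned down by requiring $V^*_{K_1}$ to be single-valued on the circle, i.e. $\int_0^{2\pi}\partial_\phi V^*_{K_1}\,d\phi=0$; solving this scalar linear relation gives (\ref{eq:c_K_1}).

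Third — and this is where the real care is needed — I would fix $\lambda^*_{K_1}$ from the compatibility (Fredholm) condition. Integrating the conservation form above over the full period annihilates the left-hand side by periodicity of $W_{K_0}^2 u$, forcing the source to be orthogonal to the zeroth-order invariant density: $\int_0^{2\pi}W_{K_0}^2(\lambda^*_{K_1}-\bar c(\phi,\mu^*_{K_0}))\,d\phi = 0$, and since $W_{K_0}^2\propto\mu^*_{K_0}$ this determines $\lambda^*_{K_1}$, equation (\ref{eq:lambda_1}). The main obstacle is precisely getting this weighting right: $\mathcal{A}V^*_{K_1}=\lambda^*_{K_1}-\bar c$ is solvable in the periodic class iff its right-hand side integrates to zero against $\ker\mathcal{A}^*$, and one must recognize that this kernel is spanned by the zeroth-order density $\mu^*_{K_0}\propto W_{K_0}^2$ (the solution of (\ref{eq:Poisson}) at order $K^0$), not by Lebesgue measure. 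A useful consistency check is that the identical condition re-emerges as the solvability requirement of the Poisson step below.

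Finally, I would solve the first-order Poisson equation. Using the HJB identity to rewrite its source as $-\tfrac{2}{\sigma^2}\mu^*_{K_0}(\lambda^*_{K_1}-\bar c(\phi,\mu^*_{K_0}))$, I make the ansatz $\mu^*_{K_1}=\Gamma W_{K_0}$. A direct computation with $\partial_\phi V^*_{K_0}=-\sigma^2\,\partial_\phi W_{K_0}/W_{K_0}$ collapses the adjoint operator to the Wronskian-type expression $\mathcal{A}^*(\Gamma W_{K_0}) = \tfrac{\sigma^2}{2}\bigl(W_{K_0}\partial_{\phi\phi}^2\Gamma - \Gamma\,\partial_{\phi\phi}^2 W_{K_0}\bigr)$, so that the equation reduces exactly to (\ref{eq:gamma}); the cross terms proportional to $(\partial_\phi\Gamma)(\partial_\phi W_{K_0})$ cancel, which is the structural reason the ansatz succeeds. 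The residual homogeneous freedom in $\Gamma$ (the mode $\Gamma\propto W_{K_0}$, i.e. $\mu^*_{K_1}\propto\mu^*_{K_0}$) is eliminated by the normalization $\int_0^{2\pi}\mu^*_{K_1}\,d\phi=0$ inherited from (\ref{eq:prob_measure}), completing the first-order correction.
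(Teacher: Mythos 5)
Your argument is correct and is essentially the paper's own proof: the ``integration factor'' the authors invoke is exactly your conservation form $\partial_\phi\bigl(W_{K_0}^2\,\partial_\phi V^*_{K_1}\bigr)=\tfrac{2}{\sigma^2}W_{K_0}^2\bigl(\lambda^*_{K_1}-\bar c\bigr)$ together with $W_{K_0}^2\propto\mu^*_{K_0}$, the constant $c_{K_1}$ is fixed by the same periodicity requirement $\int_0^{2\pi}\partial_\phi V^*_{K_1}\,d\phi=0$, and the Wronskian collapse under the ansatz $\mu^*_{K_1}=\Gamma W_{K_0}$ is the identical computation, cross terms cancelling for the same structural reason. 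The one organizational difference is where $\lambda^*_{K_1}$ is pinned down: you impose the Fredholm condition at the HJB stage, via periodicity of $\mu^*_{K_0}\,\partial_\phi V^*_{K_1}$, while the paper extracts it at the Poisson stage from periodicity of $\Gamma$ and $W_{K_0}$ in (\ref{eq:gamma}); as you yourself observe, both routes produce the same scalar condition $\int_0^{2\pi}\mu^*_{K_0}(\phi)\bigl(\lambda^*_{K_1}-\bar c(\phi,\mu^*_{K_0})\bigr)\,d\phi=0$, so nothing is gained or lost. One caution: that condition literally yields $\lambda^*_{K_1}=\int_0^{2\pi}\bigl(\tfrac{1}{2}\sin^2(\tfrac{\cdot}{2})*\mu^*_{K_0}\bigr)(\phi)\,\mu^*_{K_0}(\phi)\,d\phi$, i.e.\ the integral should be taken against $d\mu^*_{K_0}(\phi)$ rather than $d\phi$ as written in (\ref{eq:lambda_1}); the paper's own final display makes the same unweighted simplification (it uses $\int\mu^*_{K_0}=1$ on the first term but drops the weight on the second), so your appeal to (\ref{eq:lambda_1}) inherits that slip rather than introducing a new one. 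Your closing remark that the residual mode $\mu^*_{K_1}\propto\mu^*_{K_0}$ is removed by $\int_0^{2\pi}\mu^*_{K_1}\,d\phi=0$ is a point the paper leaves implicit in (\ref{eq:prob_measure}), and is worth stating.
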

\begin{proof}
	After substituting the expansion from equation (\ref{eq:expansion}) into the ergodic HJB in equation (\ref{eq:ergodic_HJB}), and collecting terms of order $K$, we arrive at:
	\begin{equation*}
		\frac{\sigma^2}{2}\partial^2_{\phi \phi}V^*_{K_1}-\partial_\phi V^*_{K_0}\partial_\phi V^*_{K_1}=\lambda^*_{k_0}-\left(\frac{1}{2}\sin^2\left(\frac{\cdot}{2} \right) * \mu^*_{K_0}\right)(\phi).
	\end{equation*}
	Using the integration factor method, we arrive at equation (\ref{eq:V_1}) for some constant $c_{K_1}$. Since $V^*_{K_1}$ is periodic, we must have $\int_0^{2\pi}\partial_\phi V^*_{K_1} d\phi=0$, which gives equation (\ref{eq:c_K_1}) for $c_{K_1}$.
	
	Next, we substitute the expansion from equation (\ref{eq:expansion}) into the Poisson equation in equation (\ref{eq:Poisson}). After collecting terms of order $K$, we arrive at:
	\begin{equation*}
		\partial_\phi \left[\mu^*_{K_0}\partial_\phi V^*_{K_1}+\mu^*_{K_1}\partial_\phi V^*_{K_0}) \right]+\frac{\sigma^2}{2}\partial^2_{\phi \phi}\mu^*_{K_1}=0.
	\end{equation*}
	Next, we use the fact that $\partial_\phi V^*_{K_0}=-\sigma^2\frac{\partial_\phi W_{K_0}}{W_{K_0}}$ and we make the substitution $\mu_1(\phi)=\Gamma(\phi) W_0(\phi)$, which leads to:
	\begin{equation*}
		\frac{\sigma^2}{2}\left(\Gamma \partial^2_{\phi \phi}W_{K_0} -W_{K_0}\partial^2_{\phi \phi}\Gamma\right)=\partial_\phi \left[\mu^*_{K_0}\partial_\phi V^*_{K_1} \right].
	\end{equation*}
	Substituting equation (\ref{eq:V_1}), we have equation (\ref{eq:gamma}). Since $\Gamma$, $W_{K_0}$, and their derivatives are periodic,
	\begin{equation*}
		\int_0^{2\pi}\Gamma \partial^2_{\phi \phi}W_{K_0}d\phi= \int_0^{2\pi} W_{K_0}\partial^2_{\phi \phi}\Gamma d\phi,
	\end{equation*}
	and thus with equation (\ref{eq:gamma}), we have:
	\begin{equation*}
	\begin{split}
		0&=\int_0^{2\pi}\frac{2}{\sigma^2}  \mu^*_{K_0}(\phi)\left(\lambda^*_{K_1}-\left(\frac{1}{2}\sin^2\left(\frac{\cdot}{2}\right)* \mu^*_{K_0}\right)(\phi)\right)d\phi, \\
		&=\lambda^*_{K_1}-\int_0^{2\pi}\left(\frac{1}{2}\sin^2\left(\frac{\cdot}{2}\right)* \mu^*_{K_0}\right)(\phi)d\phi,
	\end{split}
	\end{equation*}
	which gives equation (\ref{eq:lambda_1}).
\end{proof}
		
\section{Jet Lag Recovery}\label{sec:Recovery_Problem}
\hspace{5mm} In Section \ref{sec:Stationary_Solutions}, we posed the \textit{Ergodic Mean Field Game Problem}, which models the long time behavior of oscillators when an individual remains in their time zone for a long period of time. In other words, this problem models oscillators which are entrained to their local time zone. The second goal of this study is to model the resynchronization of oscillators after travel to a new time zone (i.e. how the oscillators return to the ergodic solution after switching time zones). We provide two such models: in the first model, which will be called the \textit{Recovery via Ergodic Problem}, the oscillators use the optimal controls they have already learned from the \textit{Ergodic Mean Field Game Problem} to resychronize to a new time zone; in the second model, which will be called the \textit{Recovery Mean Field Game Problem}, the oscillators solve a new mean field game associated with traveling to a new time zone. We now discuss the \textit{Recovery via Ergodic Problem}.

If we revisit our model formulation, provided by equations (\ref{eq:dynamics_phi})-(\ref{eq:cost_csun_phi}), we notice that the long run average form of the cost is not amenable to modeling the transient period of jet lag recovery. To be more specific, if the model is ergodic, the form of the cost in equation (\ref{eq:cost_phi}) will have the same value, regardless of the behavior of the oscillators between time $0$ and time $T$ for arbitrarily large $T>0$. Thus, the long run average form of the cost does not penalize the oscillators for taking an arbitrarily large amount of time to resynchronize, which is undesirable.

Since the oscillators have already learned the optimal control, $\alpha^*(\phi)$, for long time behavior at time zone angle $0$, they have simultaneously learned the optimal control, $\alpha^*(\phi-p)$, for long time behavior at the time zone angle $p$. If the problem is ergodic and they use the control $\alpha^*(\phi-p)$ after traveling $p/\omega_S$ time zones, then the distribution of the oscillators will converge to $\mu^*(\phi-p)$. Thus, the oscillators could use the control they've already learned, $\alpha^*(\phi-p)$, to recover from traveling $p/\omega_S$ time zones. We would like to compute the distribution, denoted $\mu_p(t,\phi)$, of the oscillators as they recover from traveling $p/\omega_S$ time zones. We make the following assumptions:
\begin{enumerate}
	\item Travel is immediate. Without loss of generality,
			\begin{equation*}
			\begin{split}
			\rho(0)&=0, \\
			\rho(t)&=p \in [-\pi,\pi),\ \forall t>0.
			\end{split}
			\end{equation*}
	\item The individual is entrained to their local time zone before travel begins:
	\begin{equation*}
	\mu_p(0,\phi)=\mu^*(\phi),
	\end{equation*}
	where $\mu^*(\phi)$ is a solution to the \textit{Ergodic Mean Field Game Problem}.
	\item The oscillators adopt the control:
	\begin{equation*}
		\alpha_p(t,\phi):=\alpha^*(\phi-p),\ \forall t>0,
	\end{equation*}
	where $\alpha^*(\phi)=-\partial_\phi V^*(\phi)$ is computed from the solution to the \textit{Ergodic Mean Field Game Problem}.
\end{enumerate}
Under these assumptions, the law of the phases of the oscillators is given by solving the Kolmogorov/ Fokker-Planck equation forward in time. We now pose the \textit{Recovery via Ergodic Problem}.

\begin{framed}
\textit{Recovery via Ergodic Problem}: Find $\mu_p(t,\phi)$ solving	
	\begin{equation}
	\begin{split}
	\partial_t \mu_p+(\omega_0-\omega_S) \partial_{\phi} \mu_p+ \partial_{\phi}\left[\mu_p\alpha_p \right]- \frac{\sigma^2}{2} \partial_{\phi \phi}^2 \mu_p&=0, \\
	\mu_p(0,\phi)&=\mu^*(\phi).
	\end{split}
	\label{eq:Kolm_recovery}
	\end{equation}
\end{framed}
Note that since the initial condition, $\mu^*(\phi)$, is a probability measure, a solution $\mu_p(t,\phi)$ to equation (\ref{eq:Kolm_recovery}) is a probability measure for all $t\geq0$.

Since we model the oscillators as selfish players when formulating the \textit{Ergodic Mean Field Game Problem}, it is feasible that the oscillators would also wish to find a mean field game equilibrium while recovering from jet lag. As mentioned earlier, an infinite time horizon is not amenable to modeling the period of time during which jet lag recovery occurs. Since it is realistic to expect jet lag recovery in a finite amount of time, we reformulate our second model for the time-dependent problem of jet lag recovery, called the \textit{Recovery Mean Field Game Problem}, by replacing the long run average form of the cost with a finite horizon. For the time-dependent problem with a finite horizon $T$, we need to specify an initial condition for the oscillators and a terminal cost. We would like to compute the distribution, denoted $\tilde{\mu}_p(t,\phi)$, of the oscillators as they recover from traveling $p/\omega_S$ time zones, while using the control computed from the value function $\tilde{V}_p(t,\phi)$. We make the following assumptions:
\begin{enumerate}
	\item Travel is immediate. Without loss of generality,
	\begin{equation*}
	\begin{split}
	\rho(0)&=0, \\
	\rho(t)&=p, \forall t>0.
	\end{split}
	\end{equation*}
	\item The individual is entrained to their local time zone before travel begins:
	\begin{equation*}
	\tilde{\mu}_p(0,\phi)=\mu^*(\phi),
	\end{equation*}
	where $\mu^*(\phi)$ is the solution to the \textit{Ergodic Mean Field Game Problem}.
	\item $T$ is chosen sufficiently large that the terminal condition $\tilde{V}_p(T,\phi)$ will not play a significant role in the behavior of the oscillators during resynchronization. Thus, we let
	\begin{equation*}
	\tilde{V}_p(T,\phi)=0.
	\end{equation*}
\end{enumerate}
Using the analytic approach to mean field games, we formulate the solution using coupled forward backward HJB and Kolmogorov/Fokker-Planck equations and pose the \textit{Recovery Mean Field Game Problem}.
\begin{framed}
\textit{Recovery Mean Field Game Problem}: Find $(\tilde{\mu}_p(t,\phi),\tilde{V}_p(t,\phi))$ solving
\begin{equation*}
\partial_t \tilde{V}_p+(\omega_0-\omega_S) \partial_\phi \tilde{V}_p-\frac{1}{2}(\partial_\phi \tilde{V}_p)^2+\frac{\sigma^2}{2}\partial_{\phi \phi}^2 \tilde{V}_p=-K\bar{c}(\phi,\tilde{\mu}_p(t,\cdot))-Fc_{sun}(\phi,p)
\end{equation*}
\begin{equation*}
\tilde{V}_p(T,\phi)=0
\end{equation*}
\begin{equation*}
\partial_t \tilde{\mu}_p+(\omega_0-\omega_S) \partial_{\phi} \tilde{\mu}_p- \partial_{\phi}\left[\tilde{\mu}_p(\partial_\phi \tilde{V}_p) \right]- \frac{\sigma^2}{2} \partial_{\phi \phi}^2 \tilde{\mu}_p=0
\end{equation*}
\begin{equation*}
\bar{c}(\phi,\tilde{\mu}_p(t,\cdot))=\frac{1}{2}\int_0^{2\pi}\sin^2\left(\frac{\phi'-\phi}{2}\right) d\tilde{\mu}_p(t,\phi')
\end{equation*}
\begin{equation*}
c_{sun}(\phi,p)=\frac{1}{2}\sin^2\left(\frac{p-\phi}{2}\right)
\end{equation*}
\begin{equation*}
\tilde{\mu}_p(t,\phi)\geq0,\int_0^{2\pi}d\tilde{\mu}_p(t,\phi)=1
\end{equation*}
\begin{equation*}
\tilde{\mu}_p(0,\phi)=\mu^*(\phi)
\end{equation*}
\end{framed}
Note that the optimal control is given by $\tilde{\alpha}_p(t,\phi):=-\partial_\phi \tilde{V}_p(t,\phi)$. To summarize the proposed models, in Section \ref{sec:Stationary_Solutions}, we posed the \textit{Ergodic Mean Field Game Problem} to describe the long time behavior of the oscillators for an individual who remains in one time zone. To model jet lag recovery, we assume that travel is immediate, the oscillators are synchronized with their time zone before travel, and the oscillators attempt to resynchronize to the new time zone by either adapting the control from the \textit{Ergodic Mean Field Game Problem} for the new time zone, as in the \textit{Recovery via Ergodic Problem}, or the oscillators find a new mean field game equilibrium throughout their recovery for a finite time horizon, as in the \textit{Recovery Mean Field Game Problem}. In the next sections, we discuss existence and uniqueness for these models and provide some definitions for quantifying jet lag recovery.

\section{Existence \& Uniqueness}\label{sec:Existence_Uniqueness}
\hspace{5mm} For the \textit{Ergodic Mean Field Game Problem}, the existence of a classical solution follows from the early papers of Lasry and Lions, relying on compactness arguments and a use of Schauder's fixed point theorem \cite{lasry2007mean}. A detailed proof is given for Theorem 2.1 in the paper of Bardi and Feleqi \cite{bardi2016nonlinear}. Existence for the \textit{Recovery Mean Field Game Problem} also follows \cite{lasry2007mean}. The notes of Cardaliaguet based on the lectures of Lions provide a detailed proof which can be readily extended to our \textit{Recovery Mean Field Game Problem} \cite{cardaliaguet2010notes}.

Uniqueness, on the other hand, is hard to prove in general for mean field games. Typically, uniqueness arguments are based on a monotonicity condition for the cost functions in the measure argument. This Lasry Lions monotonicity condition \cite{lasry2007mean} is not satisfied for the running cost function $\bar{c}(\phi,\nu)$. In fact, we can use the results in \cite{yin2012synchronization} to show that there is not uniqueness for the \textit{Ergodic Mean Field Game Problem} in at least some cases. For example, if $F=0$, $\omega_0=\omega_S$, and $K=1$, we recover the model in \cite{yin2012synchronization}, where their parameter $\omega$ is zero. Their model is slightly different in that they look for solutions on an infinite time horizon which may depend on time. In this case, there are at least two solutions: the uniform distribution, which they call the \textit{incoherence solution}, and a `traveling wave' solution that they prove in Theorem 4.3. Since the speed of the `traveling wave' is $\omega=0$, the solution does not depend on time, and we recover a second solution to our \textit{Ergodic Mean Field Game Problem}. Thus, uniqueness does not hold in general if $F=0$. We will require $F>0$ and we will assume uniqueness in order to have clear definitions for jet lag recovery. In fact, assuming uniqueness allows us to uniquely define the initial condition for both recovery problems and the control to be used for the \textit{Recovery via Ergodic Problem}.

Given a solution $\mu^*$ and $\alpha^*$ to the \textit{Ergodic Mean Field Game Problem}, existence and uniqueness for the \textit{Recovery via Ergodic Problem} follows from the standard PDE literature.

We also note that in the case $K=0$, the monotonicity condition of Lasry and Lions is satisfied, and we have uniqueness for both the \textit{Ergodic Mean Field Game Problem} and the \textit{Recovery Mean Field Game Problem} \cite{lasry2007mean}. In fact, if $K=0$, there is no mean field interaction so uniqueness comes from standard optimal control theory.

\section{Quantifying Jet Lag Recovery}\label{sec:Jet_Lag_Recovery}
\hspace{5mm} To quantify jet lag recovery, we measure the time it takes to recover from jet lag, and the cost the oscillators accrued while recovering from jet lag. First, we need to define what it means to have recovered from jet lag. To simplify the presentation, we write the following definitions in terms of the solution $\mu_p(t,\phi)$ of the \textit{Recovery via Ergodic Problem}, where analogous definitions are clear for the solution $(\tilde{\mu}_p(t,\phi),\tilde{V}_p(t,\phi)$ to the \textit{Recovery Mean Field Game Problem}. Since we are assuming $(\mu^*(\phi),V^*(\phi),\lambda^*)$ solving the \textit{Ergodic Mean Field Game Problem} is unique, we define jet lag recovery as the following.

\begin{definition}
For a given $p \in[-\pi,\pi)$ and $\epsilon^W>0$, we say that the oscillators have \textit{$\epsilon^W$-recovered} from jet lag for traveling $p/\omega_S$ time zones away if $\exists$ $t\geq 0$ s.t.:
\begin{equation*}
\mathcal{W}_2(\mu_p(t,\phi),\mu^*(\phi-p))<\epsilon^W,
\end{equation*}
where $\mathcal{W}_2$ denotes the 2-Wasserstein distance, $\mu^*(\phi)$ denotes the solution to the \textit{Ergodic Mean Field Game Problem} and $\mu_p(t,\phi)$ denotes the solution to the \textit{Recovery via Ergodic Problem} for the given value of $p$.
\end{definition}
In other words, the oscillators are within $\epsilon^W$ of entrainment after traveling from time zone angle $0$ to time zone angle $p$. We define the recovery time as:
\begin{equation*}
\tau^W_p:=\inf\{t>0:\mathcal{W}_2(\mu_p(t,\phi),\mu^*(\phi-p)) < \epsilon^W\}.
\end{equation*}

A second notion of jet lag recovery time is based on the analog of the order parameter introduced in Lu et al. \cite{lu2016resynchronization}. Let $z^*=\int_0^{2\pi}e^{i\phi}d\mu^*(\phi)$. Similarly, let $z_p(t)=\int_0^{2\pi}e^{i\phi}d\mu_p(t,\phi)$. We define a second notion of jet lag recovery time:
\begin{equation*}
\tau^z_p:=\inf\{t>0:|z_p(t)-e^{ip}z^*| < \epsilon^z\}.
\end{equation*}

We also consider the cost accrued while recovering from jet lag using the values of the cost functions that the oscillators wish to minimize. We define the instantaneous contributions to the overall cost by:
\begin{equation*}
f_{p,\alpha}(t):=\int_0^{2\pi} \frac{1}{2}\alpha_p(t,\phi)^2d\mu_p(t,\phi)=\int_0^{2\pi} \frac{1}{2}\alpha^*(\phi-p)^2d\mu_p(t,\phi),
\end{equation*}
\begin{equation*}
f_{p,osc}(t):=\int_0^{2\pi} \bar{c}(\phi,\mu_p(t,\cdot))d\mu_p(t,\phi),
\end{equation*}
\begin{equation*}
f_{p,sun}(t):=\int_0^{2\pi} c_{sun}(\phi,p)d\mu_p(t,\phi),
\end{equation*}
\begin{equation*}
f_p(t):=\frac{f_{\alpha}(t)+K\cdot f_{osc}(t)+F\cdot f_{sun}(t)}{1+K+F},
\end{equation*}
To simplify the presentations of the numerical results, we compare the cost accrued over the first $10$ days after a time zone change, since we find that the oscillators recover within $10$ days for almost all of the cases we consider. With an abuse of notation, we compute $f_{p,\alpha}:=\int_0^{240} f_{p,\alpha}(t) dt$, $f_{p,osc}:=\int_0^{240} f_{p,osc}(t) dt$, $f_{p,sun}:=\int_0^{240} f_{p,sun}(t) dt$, and $f_{p}:=\int_0^{240} f_{p}(t) dt$.

As mentioned earlier, we have analogous definitions, denoted $\tilde{\tau^W_p}$, $\tilde{\tau^z_p}$, $\tilde{z}_p(t)$, $\tilde{f}_{p,\alpha}$, $\tilde{f}_{p,osc}$, $\tilde{f}_{p,sun}$, and $\tilde{f}_{p}$ computed from the \textit{Recovery Mean Field Game Problem} solution $(\tilde{\mu}_p(t,\phi),\tilde{V}_p(t,\phi))$.

\section{Numerical Methods}\label{sec:Numerical_Methods}
\hspace{5mm} We have three problems to solve numerically: the \textit{Ergodic Mean Field Game Problem}, the \textit{Recovery via Ergodic Problem}, and the \textit{Recovery Mean Field Game Problem}. The numerical complications are mainly due to the nonlinearity in the HJB equation, as well as the coupling between the ergodic HJB and the Poisson equation. For all three problems, our numerical approach is to use finite differences. The ideas for our numerical methods were drawn from the papers on numerically solving mean field games by Achdou and collaborators \cite{achdou2013finite}\cite{achdou2013mean}\cite{achdou2010mean}\cite{achdou2012iterative}. Our methods are described in more detail below. Note that the \textit{Ergodic Mean Field Game Problem} needs to be solved first, since its solution is used as the initial condition for the two recovery problems.

\subsection{Ergodic Mean Field Game Problem: Finite Difference Equations}\label{sec:Ergodic Mean Field Game Problem: Finite Difference Equations}
\hspace{5mm} We approximate the solution $(\mu^*(\phi),V^*(\phi),\lambda^*)$ to the \textit{Ergodic Mean Field Game Problem} by $(M=\{M_j\}_{0 \leq j < n},U=\{U_j\}_{0 \leq j < n},\Lambda)$ where $M_j \approx \mu^*(j \Delta \phi)$, $U_j \approx V^*(j \Delta \phi),$ and $\Lambda \approx \lambda^*$. The grid step size is $\Delta \phi:= \frac{2 \pi}{n}$, where $n$ is the number of grid points.

To linearize the HJB equation, we use the following approximation of the control, $\beta_j \approx \alpha^*(j \Delta \phi)=-\partial_\phi V^*(j\Delta \phi)$. Using a monotone scheme to define the direction of the first derivative, the discretized HJB becomes:
	\begin{equation}
	\begin{split}
	(\omega_0-\omega_S+\beta_j)^+\cdot \frac{U_{j+1}-U_{j}}{\Delta \phi}+(\omega_0-\omega_S+\beta_j)^-\cdot\frac{U_j-U_{j-1}}{\Delta \phi}& \\
	+\frac{\sigma^2}{2}\frac{U_{j+1}-2U_{j}+U_{j-1}}{\Delta \phi^2}=
	\Lambda-\frac{1}{2}\beta_j^2-K\bar{c}(j \Delta \phi,M)-Fc_{sun}(j \Delta \phi),\ \forall j,&
	\end{split}
	\label{eq:discretized_HJB}
	\end{equation}
	\begin{equation*}
	\bar{c}(j\Delta \phi,M)=\frac{1}{2}\sum_{k=0}^{n-1}\sin^2\left(\frac{(k-j)\Delta \phi}{2}\right)M_k \Delta \phi,\ \forall j,
	\end{equation*}
	\begin{equation*}
	c_{sun}(j \Delta \phi)=\frac{1}{2}\sin^2\left(\frac{j \Delta \phi}{2}\right),\ \forall j,
	\end{equation*}
	\begin{equation}
	\sum_{j=0}^{n-1}U_j=0.
	\label{eq:sum_V}
	\end{equation}
Note that the domain is periodic, so $U_{-1}=U_{n-1}$, and $U_n=U_0$, and similarly for $M$. We define the matrix $L_{\beta}$ to be the linear operator on the left hand side of equation (\ref{eq:discretized_HJB}). Equation (\ref{eq:discretized_HJB}) becomes:
	\begin{equation}
	\left(L_{\beta}U\right)_j=\Lambda-\frac{1}{2}\beta_j^2-K\bar{c}(j \Delta \phi,M)-Fc_{sun}(j \Delta \phi),\ \forall j.
	\label{eq:discretized_HJB_L}
	\end{equation}
Using this operator, we write the corresponding discretized Poisson equation as the following:
	\begin{equation}
	\left(L_{\beta}^TM\right)_j=0,\ \forall j,
	\label{eq:discretized_Poisson}
	\end{equation}
	\begin{equation*}
	M_j\geq0,\ \forall j,
	\end{equation*}
	\begin{equation}
	\sum_{j=0}^{n-1}M_j \Delta \phi=1,
	\label{eq:sum_mu}
	\end{equation}
where $L_{\beta}^T$ denotes the transpose. The last thing we need is to make sure $\beta$ is a good approximation for the control, while preserving a monotone scheme. Thus, for each $j$, the approximation of $\beta_j$ is calculated from $U$ as follows:

Let $l_j=\omega_0-\omega_S-\frac{U_{j}-U_{j-1}}{\Delta \phi}$, and $r_j=\omega_0-\omega_S-\frac{U_{j+1}-U_{j}}{\Delta \phi}$.

\begin{itemize}

\item If $l_j<0$ and $r_j<0$:
	$$\beta_j=-\frac{U_{j}-U_{j-1}}{\Delta \phi}.$$
\item Else if $l_j>0$ and $r_j>0$:
	$$\beta_j=-\frac{U_{j+1}-U_{j}}{\Delta \phi}.$$
\item Else:
	$$\beta_j=0.$$
\end{itemize}

\subsection{Ergodic Mean Field Game Problem: Numerical Method 1}
\hspace{5mm} We implemented two numerical methods to solve the finite difference scheme for the \textit{Ergodic Mean Field Game Problem} proposed in the previous section. The first numerical method we implemented is the following:
\begin{enumerate}
	\item Initialize $M^0$ and $\beta^0$. In practice, we set $M_j^0=\frac{1}{2\pi}$, $\beta_j^0=0$, $j=0, \dots , n-1$.
	\item Given $M^k$ and $\beta^k$, solve the linear system of equations (\ref{eq:sum_V}) and (\ref{eq:discretized_HJB_L}) for $U^k$ and $\Lambda^k$.
	\item Given $U^k$, calculate new approximation for the control, $\beta^{k+1}$ as described at the end of Section \ref{sec:Ergodic Mean Field Game Problem: Finite Difference Equations}.
	\item Given $\beta^{k+1}$, solve the linear system of equations (\ref{eq:discretized_Poisson}) and (\ref{eq:sum_mu}) for $M^{k+1}$. Since there are more equations than unknowns, this system is overdetermined, so we use the least squares solution. Let $\epsilon^{k+1}$ denote the error in solving this system:
	\begin{equation*}
		\epsilon^{k+1}:=\sqrt{\sum_{j=0}^{n-1}\left(\left(L_{\beta}^TM\right)_j\right)^2+\left(1-\sum_{j=0}^{n-1}M_j \Delta \phi\right)^2}.
	\end{equation*}
	\item Repeat steps 2 through 4 until:
	\begin{equation*}
		\begin{split}
		\mathcal{W}_2(M^{k}\Delta \phi,M^{k+1}\Delta \phi)<&\epsilon, \\
		\sqrt{\sum_{j=0}^{n-1}\left(\beta_j^k-\beta_j^{k+1}\right)^2}<&\epsilon, \\
		|\Lambda^k-\Lambda^{k+1}|<&\epsilon, \\
		\epsilon^{k+1} < \epsilon,
		\end{split}
	\end{equation*}
	where $\mathcal{W}_2$ denotes the 2-Wasserstein distance.
\end{enumerate}
Note that convergence is not guaranteed. In fact, for some values of the parameters, this method does not converge. This is the motivation for trying a second method for solving the \textit{Ergodic Mean Field Game Problem}.

\subsection{Ergodic Mean Field Game Problem: Numerical Method 2}
\hspace{5mm} Since the previous method does not always converge, we implemented a second method to solve the posed finite difference scheme for the \textit{Ergodic Mean Field Game Problem}. In this method, we introduce an artificial time derivative. Thus, given $M^k$, $U^k$, and $\beta^k$, we explicitly calculate $U^{k+1}$ by:
	\begin{equation}
	U_j^{k+1}=U_j^k+\Delta t \left[\left(L_{\beta^k}U^k\right)_j+\frac{1}{2}(\beta^k_j)^2+K\bar{c}(j \Delta \phi,M^k)+Fc_{sun}(j \Delta \phi)\right].
	\label{eq:discretized_HJB_L_method_2}
	\end{equation}
Similarly, given $\beta^{k+1}$, we explicitly calculate $M^{k+1}$ by:
	\begin{equation}
	M_j^{k+1}=M_j^k+\Delta t \left(L_{\beta^{k+1}}^TM^k\right)_j.
	\label{eq:discretized_Poisson_method_2}
	\end{equation}
Bringing this together, the second numerical method is the following:
\begin{enumerate}
	\item Initialize $M^0$, $\beta^0$, and $U^0$. In practice, we set $M_j^0=\frac{1}{2\pi}$, $\beta_j^0=0$, $U_j^0=0$, $j=0, \dots , n-1$.
	\item Given $M^k$, $\beta^k$, and $U^k$, compute $U^{k+1}$ from equation (\ref{eq:discretized_HJB_L_method_2}).
	\item Given $U^{k+1}$, calculate new approximation for the control, $\beta^{k+1}$ as described at the end of Section \ref{sec:Ergodic Mean Field Game Problem: Finite Difference Equations}.
	\item Given $M^k$ and $\beta^{k+1}$, compute $M^{k+1}$ from equation (\ref{eq:discretized_Poisson_method_2}).
	\item Repeat steps 2 through 4 until
	\begin{equation*}
	\begin{split}
	\mathcal{W}_2(M^{k}\Delta \phi,M^{k+1}\Delta \phi)<&\epsilon, \\
	\sqrt{\sum_{j=0}^{n-1}\left(\beta_j^k-\beta_j^{k+1}\right)^2}<&\epsilon.
	\end{split}
	\end{equation*}
\end{enumerate}
As for the previous method, convergence is not guaranteed. If the method converges, then $M^k \approx M^{k+1}$ and $\beta^k \approx \beta^{k+1}$, and the solution approximately solves the original discretized equations before adding the artificial time dependence. Note that since this is an explicit method, we need a CFL stability condition on the relationship between $\Delta t$ and $\Delta \phi$:
\begin{equation}
\Delta t \leq \frac{1}{2(\sigma^2/(\Delta \phi)^2+(|\omega_S-\omega_0|+A)/\Delta \phi)},
\label{eq:CFL}
\end{equation}
where $A$ is a bound on $|\beta|$. Since $A$ is not known a priori, we guess a value for $A$, apply the algorithm, and check if $|\beta^k_{j}| \leq A,\ \forall j,k$. If not, a larger value of $A$ is used, and the process is repeated until a sufficiently large $A$ is chosen. The numerical results are presented in Section \ref{sec:Numerical_Results}. Next, we describe the numerical methods for the \textit{Recovery via Ergodic Problem}.
	
\subsection{Recovery via Ergodic Problem: Numerical Method}
\hspace{5mm} We approximate the solution $\mu_p(t,\phi)$ to the \textit{Recovery via Ergodic Problem} by $M^p=\{M_{i,j}\}_{0 \leq i < m, 0 \leq j < n}$ where $M^p_{i,j} \approx \mu_p(i \Delta t, j \Delta \phi)$. The time step size, $\Delta t$, is chosen to satisfy the CFL condition as given in equation (\ref{eq:CFL}), where $A=\max_{j}|\beta_j|$.

Since we assume the oscillators adopt the control $\alpha^*(\phi-p)$ and we have the approximation $\beta_j \approx \alpha^*(j \Delta \phi)$, we need to rotate the control by $n \cdot p/(2\pi)$ grid points. We conveniently take $n=120$ so that $n \cdot p/(2\pi)$ is an integer for $|p|/\omega_S=1,\cdots,12$ (i.e. $n$ is a multiple of $24$.). Let $\beta^p_j:=\beta_{j-r}\approx \alpha^*(j \Delta \phi-p)$ where $r:=n \cdot p/(2\pi)$.

We use an explicit scheme by using a forward difference for the time derivative in the Kolmogorov/ Fokker-Planck equation (since we have an initial condition). Thus, the Kolmogorov/ Fokker-Planck equation becomes:
	\begin{equation}
	M^p_{i+1,j}=M^p_{i,j}+\Delta t \left(L_{\beta^p}^TM^p_i\right)_j,\ \forall i,j
	\label{eq:discretized_Kolm},
	\end{equation}
with initial condition $M^p_{0,j}=M_j$, $j=0, \dots, n-1$. Note that because of the choice of using a monotone scheme to define the operator $L$, if $M^p_{0,j} \geq0,\ \forall j$, then $M^p_{i,j} \geq0,\ \forall i,j$. Also note that since the rows of $L$ sum to $0$, we also have that $\sum_{j=0}^{n-1}M^p_{0,j}\Delta \phi=1$ implies $\sum_{j=0}^{n-1}M^p_{i,j}\Delta \phi=1,\ \forall i$. The numerical method for solving the \textit{Recovery via Ergodic Problem} is straightforward: compute equation (\ref{eq:discretized_Kolm}) forward in time. Next, we describe the numerical methods for the \textit{Recovery Mean Field Game Problem}.

\subsection{Recovery Mean Field Game Problem: Finite Difference Equations}
\hspace{5mm} We approximate the solution $(\tilde{\mu}_p(t,\phi),\tilde{V}_p(t,\phi))$ to the \textit{Recovery Mean Field Game Problem} by

$(\tilde{M}^p=\{\tilde{M}^p_{i,j}\}_{0 \leq i < m, 0 \leq j < n}$, $\tilde{U}^p=\{\tilde{U}^p_{i,j}\}_{0 \leq i < m, 0 \leq j < n})$ where $\tilde{M}^p_{i,j} \approx \tilde{\mu}_p(i\Delta t,j \Delta \phi)$, and $\tilde{U}^p_{i,j} \approx \tilde{V}_p(i\Delta t,j \Delta \phi)$. The time step is $\Delta t = \frac{T }{m}$, where $m$ is the number of grid points in time. Again, $\Delta t$, is chosen to satisfy the CFL condition as given in equation (\ref{eq:CFL}). To set notation, whenever only one subscript appears, we are referring to the time index. (E.g. $\tilde{U}^p_{i}=\{ \tilde{U}^p_{i,j}\}_{0 \leq j < n}$.) We again make use of the following approximation of the control: $\tilde{\beta}^p_{i,j} \approx \tilde{\alpha}_p(i \Delta t,j \Delta \phi)=-\partial_\phi \tilde{V}_p(i \Delta t,j\Delta \phi)$.

We use an explicit scheme. Since we have a terminal condition for the HJB, we use a backwards difference for the time derivative. Thus, the HJB equation becomes:
\begin{equation}
\tilde{U}^p_{i,j}=\tilde{U}^p_{i+1,j}+\Delta t \left[(L_{\tilde{\beta}^p_{i+1}}\tilde{U}^p_{i+1})_j+\frac{1}{2}(\tilde{\beta}^p_{i+1,j})^2+K\bar{c}(j \Delta \phi,\tilde{M}^p_{i+1}(\cdot))+Fc_{sun}(j \Delta \phi)\right],
\label{eq:discretized_HJB_recovery_MFG}
\end{equation}
with terminal condition $\tilde{U}^p_{m,j}=0$, $j=0, \dots, n-1$. Similarly, since we have an initial condition for the Kolmogorov/Fokker-Planck equation, we use a forward difference for the time derivative. The Kolmogorov/Fokker-Planck equation becomes:
\begin{equation}
\tilde{M}^p_{i+1,j}=\tilde{M}^p_{i,j}+\Delta t (L_{\tilde{\beta}^p_{i}}^T\tilde{M}^p_i)_j,
\label{eq:discretized_Poisson_recovery_MFG}
\end{equation}
with initial condition $\tilde{M}^p_{0,j}=M_j$, $j=0, \dots, n-1$. As for the previous model, the initial condition guarantees that $\tilde{M}^p_{i,j} \geq0, \forall i,j$ and $\sum_{j=0}^{n-1}\tilde{M}^p_{i,j}\Delta \phi=1, \forall i$. For each time step $i$, $\tilde{\beta}^p_{i}$ is computed from $\tilde{U}^p_{i}$ in the same way as described at the end of Section \ref{sec:Ergodic Mean Field Game Problem: Finite Difference Equations}.

\subsection{Recovery Mean Field Game Problem: Numerical Method}
\hspace{5mm} The numerical method for solving the \textit{Finite Horizon Problem} is the following:
\begin{enumerate}
	\item Initialize $\tilde{M}^{p,0}$ and $\tilde{\beta}^{p,0}$. In practice, we set $\tilde{M}^{p,0}_{i,j}=M_j$, $\tilde{\beta}^{p,0}_{i,j}=0$, $i=0, \dots, m-1$, $j=0, \dots, n-1$.
	\item Given $\tilde{M}^{p,k}$ and $\tilde{\beta}^{p,k}$, compute $\tilde{U}^{p,k}$ backward in time from equation (\ref{eq:discretized_HJB_recovery_MFG}).
	\item Given $\tilde{U}^{p,k}$, calculate new approximation for the control, $\tilde{\beta}^{p,k+1}$.
	\item Given $\tilde{\beta}^{p,k+1}$, compute $\tilde{M}^{p,k+1}$ forward in time from equation (\ref{eq:discretized_Poisson_recovery_MFG}).
	\item Repeat steps 2 through 4 until
	\begin{equation*}
	\begin{split}
	\max_i \sqrt{\sum_{j=0}^{n-1}\left(\tilde{M}^{p,k}_{i,j}\Delta \phi-\tilde{M}^{p,k+1}_{i,j}\Delta \phi\right)^2}<&\epsilon, \\
	\max_i \sqrt{\sum_{j=0}^{n-1}\left(\tilde{\beta}^{p,k}_{i,j}-\tilde{\beta}^{p,k+1}_{i,j}\right)^2}<&\epsilon.
	\end{split}
	\end{equation*}
\end{enumerate}
Note that the $L^2$ distance is used between $\tilde{M}^{p,k}_{i}$ and $\tilde{M}^{p,k+1}_{i}$, since the 2-Wasserstein distance is expensive and $m$, the number of time steps, is large. As in \textit{Method 2} for the \textit{Ergodic Mean Field Game Problem}, this is an explicit method and we need to use the stability condition in equation (\ref{eq:CFL}).

\section{Numerical Results}\label{sec:Numerical_Results}
\hspace{5mm} For our numerical results, we fix $\omega_S=\frac{2\pi}{24}$ radians per hour. The number of grid points in $\phi$ is also fixed at $n=120$. The parameters of interest are thus $p$, $\omega_0$, $\sigma$, $K$, and $F$. Results are presented for a reference set of values for these parameters as well as changing each parameter individually while holding the other parameters at the reference values. For our reference set, we let $p=\pm 9 \omega_S$ (travel east or west by 9 time zones), $\omega_0=\frac{2\pi}{24.5}$ radians per hour, $\sigma=0.1$, $K=0.01$, and $F=0.01$.

\subsection{Ergodic Mean Field Game Problem: Convergence of Numerical Methods}
\hspace{5mm} As previously mentioned, the proposed iterative schemes are not guaranteed to converge. In this section, we explore the convergence of the two proposed methods for the \textit{Ergodic Mean Field Game Problem}, both when using the monotone scheme proposed above as well as using a centered scheme (i.e. replacing the one sided differences with centered differences in the definition of $L_{\beta}$ from equation (\ref{eq:discretized_HJB})), for different values of the weight parameters, $K$ and $F$.

Note that if $F=0$, we have the model introduced in \cite{yin2012synchronization}, in which it is shown that there is not uniqueness. For values of $K \geq 0$ and $F >0$, we report if \textit{Method 1} converges within one thousand iterations and if \textit{Method 2} converges within one million iterations. We let $\epsilon=10^{-5}$ in the convergence definitions.

For some values of the parameters, the solution `converges' to a $\mu$ which is concentrated on a value far from the value $p=0$. This `convergence' is up to $\epsilon=10^{-5}$ as defined in the numerical methods descriptions. Since this is clearly not a solution to the problem when $F>0$, we check if the `average' phase is near 0. More precisely, we compute the order parameter $z$ defined in Section \ref{sec:Jet_Lag_Recovery}. Then we compute the angle $\psi \in [-\pi,\pi]$ that $z$ makes with the real line in the complex plane. We require $|\psi|<0.1$ to consider the solution valid. When we use the centered scheme instead of the monotone scheme for defining first derivatives, some values of $\mu_j$ are negative. Since small negative values are permissible, we require $\mu_j>-\epsilon$ for $\epsilon=10^{-5}$ to consider the solution valid. Thus there are three possible outcomes: converges (as defined in the descriptions of the algorithms in Section \ref{sec:Numerical_Methods}), converges but not to a valid solution, and does not converge.

Figure \ref{fig:convergence} shows the convergence results for both methods and schemes, when the other parameters remain at the reference values stated above. From these plots, we cannot conclude that any single method is the best. In fact, there are only two points, $(K,F)=(0.001, 0.01)$ and $(K,F)=(0.01, 0.01)$, for which all four methods/schemes converge. For this reason, we set $(K,F)=(0.01, 0.01)$ for our reference values of the parameters. Where both methods converge, \textit{Method 1} and \textit{Method 2} produce the same results when using the same scheme (monotone or centered). The main difference between the solutions for the monotone and centered schemes is the monotone scheme solution is not smooth near $\phi=0$. Figure \ref{fig:smoothness} shows the solutions for the monotone and centered schemes. Because the solution is smooth, we chose to use the centered scheme for the remainder of the numerical results.

\begin{figure}
	\begin{subfigure}{.5\textwidth}
		\centering
		\includegraphics[scale=0.5]{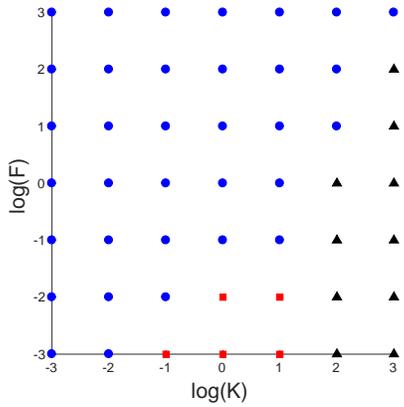}
		\caption{Method 1, Monotone Scheme}
	\end{subfigure}
	\begin{subfigure}{.5\textwidth}
		\centering
		\includegraphics[scale=0.5]{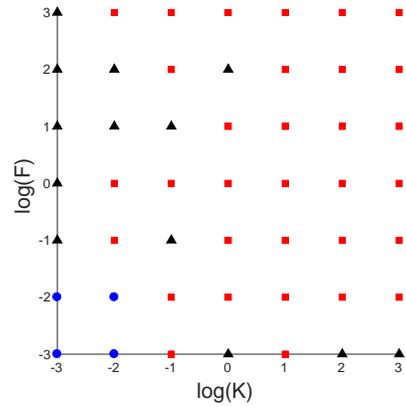}
		\caption{Method 1, Centered Scheme}
	\end{subfigure}
	\begin{subfigure}{.5\textwidth}
		\centering
		\includegraphics[scale=0.5]{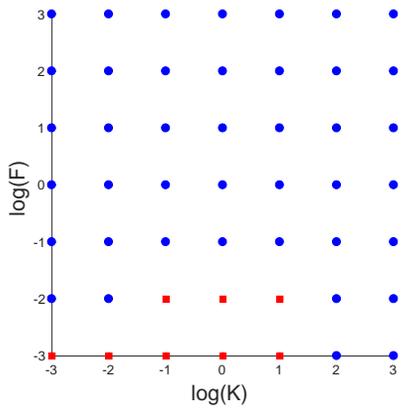}
		\caption{Method 2, Monotone Scheme}
	\end{subfigure}
	\begin{subfigure}{.5\textwidth}
		\centering
		\includegraphics[scale=0.5]{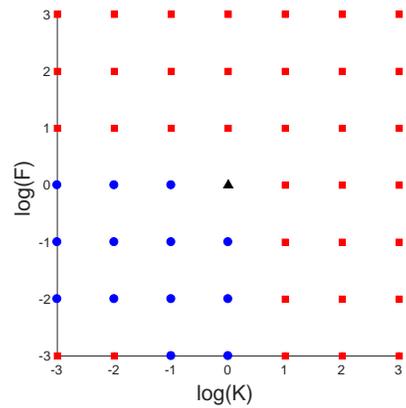}
		\caption{Method 2, Centered Scheme}
	\end{subfigure}
	\caption{\textit{Ergodic Mean Field Game Problem}: Convergence results for different values of $(K,F)$. Blue circle means convergence. Black triangle means convergence but not to a valid solution. Red square means did not converge. Note that the axes are $log_{10}$.}
	\label{fig:convergence}
\end{figure}

\begin{figure}
	\begin{subfigure}{.5\textwidth}
		\centering
		\includegraphics[scale=0.5]{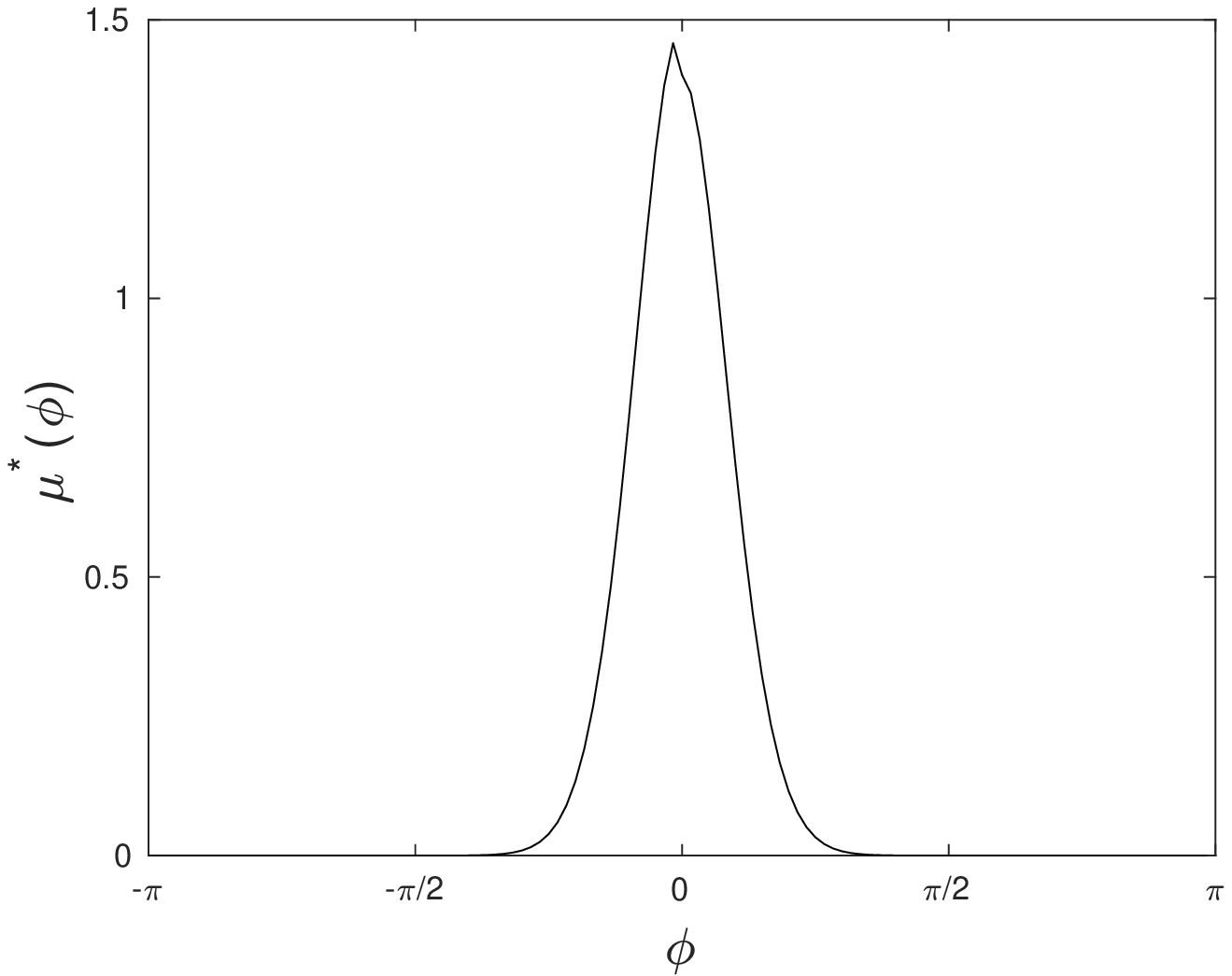}
		\caption{$\mu^*(\phi)$ for monotone scheme}
	\end{subfigure}
	\begin{subfigure}{.5\textwidth}
		\centering
		\includegraphics[scale=0.5]{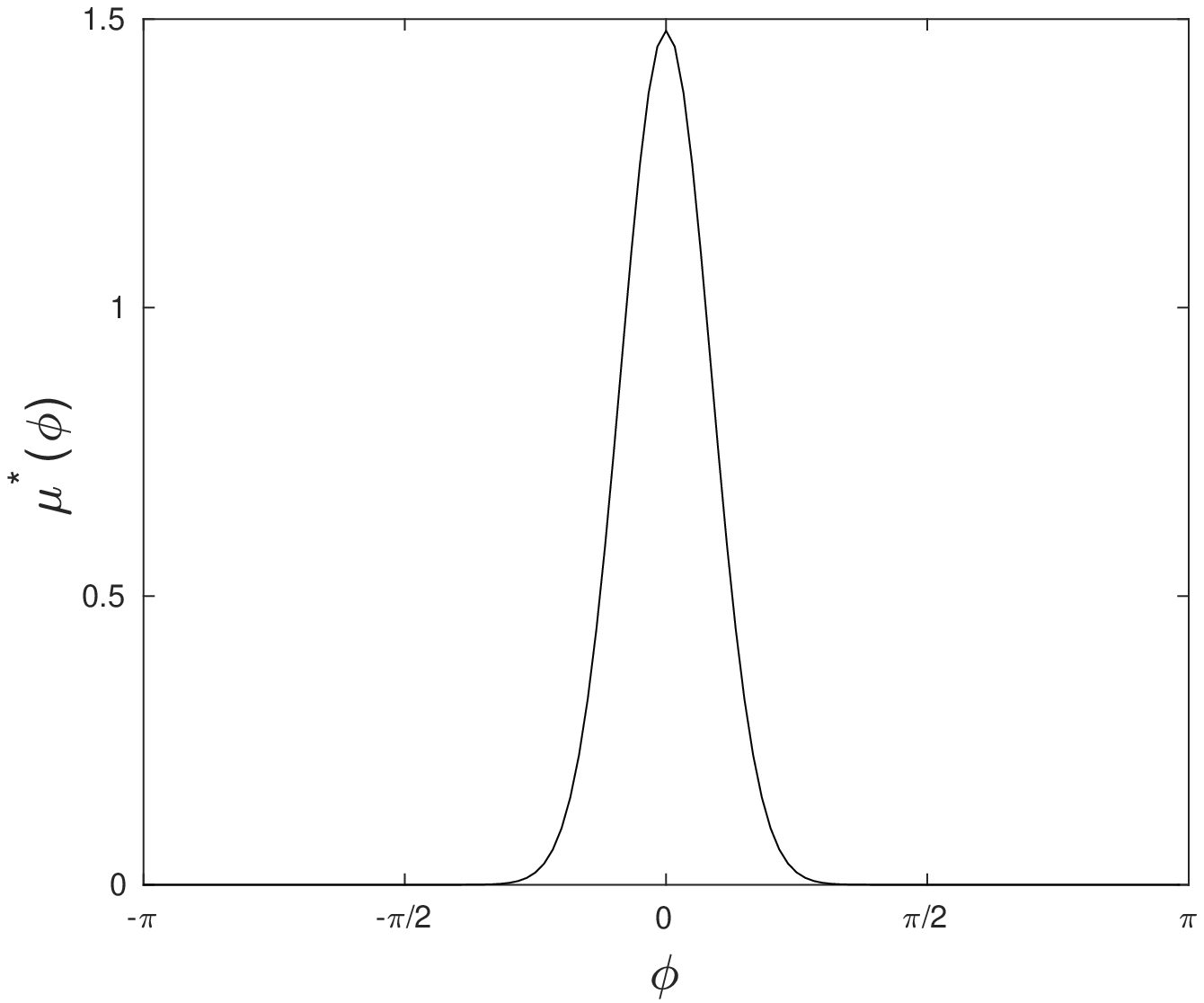}
		\caption{$\mu^*(\phi)$ for centered scheme}
	\end{subfigure}
	\caption{Comparison of solutions to \textit{Ergodic Mean Field Game Problem} with different schemes for the reference set of parameters. Note that the monotone scheme solution is not smooth near $\phi=0$.}
	\label{fig:smoothness}
\end{figure}

\subsection{Ergodic Mean Field Game Problem Numerical Solution}
\hspace{5mm} For our reference set of parameters, the solution to the \textit{Ergodic Mean Field Game Problem} when using \textit{Method 1} with a centered scheme is shown in Figure \ref{fig:Stationary_Reference}. Note that the measure is concentrated near $p=0$ (recall that the domain is periodic). The stationary solution is used to calculate the optimal control $\alpha^*(\phi)$. The drift of an oscillator using the optimal control is a function of $\phi$, shown in Figure \ref{fig:Stationary_Reference_Drift}. Note that near $\phi=0$, if $\phi<0$, meaning the oscillator is lagging behind the phase of the natural 24 hour cycle, then the drift is positive. This allows the oscillator to advance it's phase forward to try to `catch up.' (Since we are on a periodic domain, we state this more precisely as the drift is positive if $\phi \in (\pi+\epsilon,2\pi)$ for a small $\epsilon>0$. The intuition is not as clear for this statement, however.) Similarly, if $\phi \in (0,\pi]$, the drift is negative, meaning that the oscillator delays it's phase.

		\begin{figure}
			\begin{subfigure}{.5\textwidth}
				\centering
				\includegraphics[scale=0.5]{mu_star_reference_set_centered.eps}
				\caption{$\mu^*(\phi)$}
			\end{subfigure}
			\begin{subfigure}{.5\textwidth}
				\centering
				\includegraphics[scale=0.5]{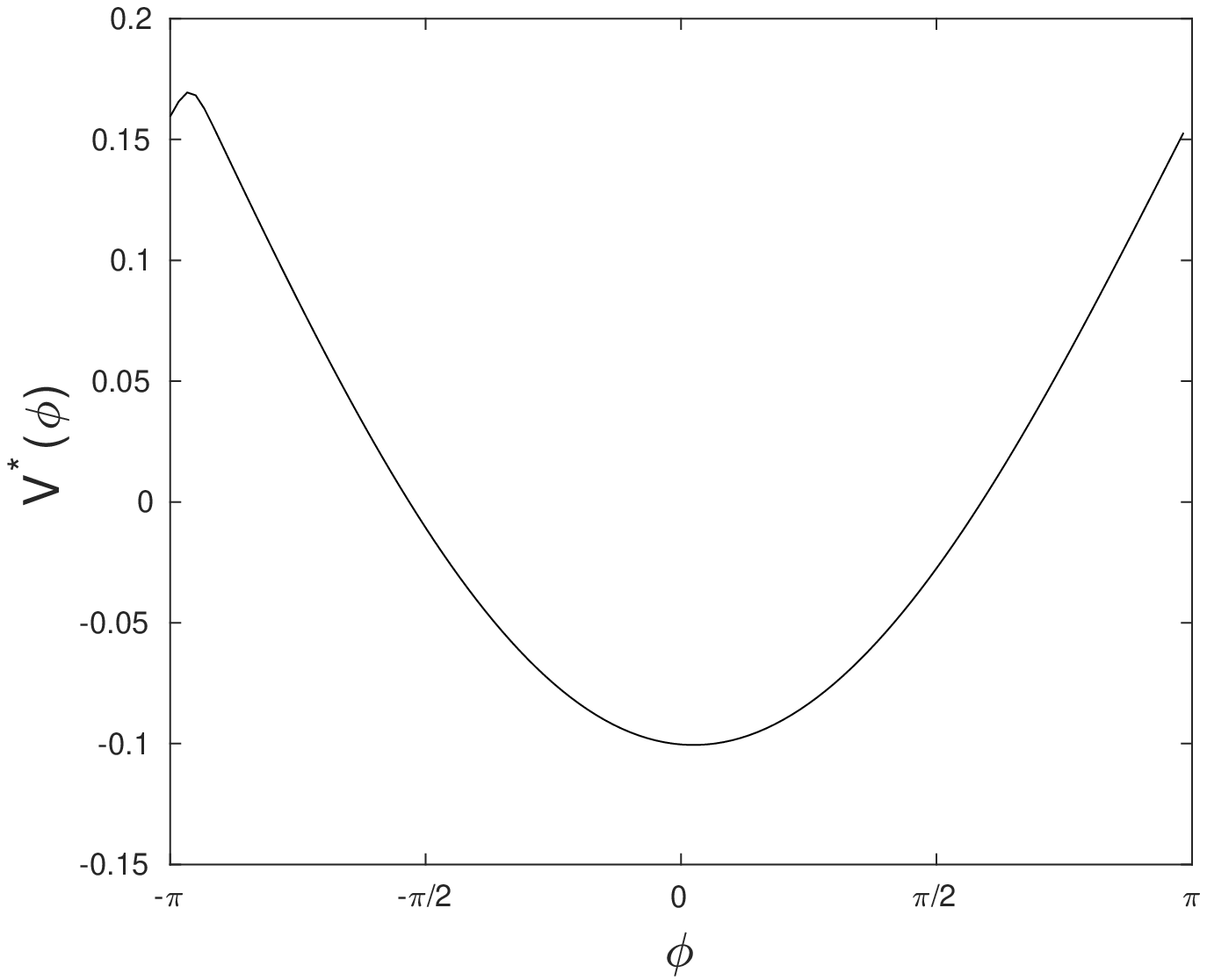}
				\caption{$V^*(\phi)$}
			\end{subfigure}
			\caption{Solution to \textit{Ergodic Mean Field Game Problem} for the reference set of parameters.}
			\label{fig:Stationary_Reference}
		\end{figure}

		\begin{figure}
				\centering
				\includegraphics[scale=0.5]{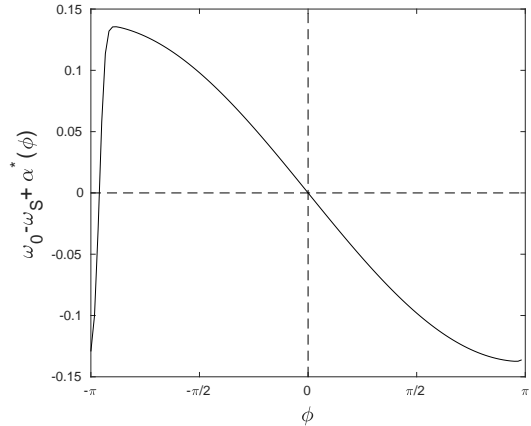}
				\caption{Drift for the solution to the \textit{Ergodic Mean Field Game Problem} ($\omega_0-\omega_S+\alpha^*(\phi))$ for the reference set of parameters.}
			\label{fig:Stationary_Reference_Drift}
		\end{figure}

\subsection{Recovery via Ergodic Problem Numerical Solution}
\hspace{5mm} Using the numerical solution to the \textit{Ergodic Mean Field Game Problem} as the initial condition and control, the solution to the \textit{Recovery via Ergodic Problem} is shown in Figures \ref{fig:Reference_East} and \ref{fig:Reference_West} for traveling $9$ time zones east and west, respectively. The solution is shown at time $0$, after $1$ day, after $2$ days, and after $3$ days to illustrate the gradual adjustment to the new time zone angle $p$, which is plotted as a vertical line.

	\begin{figure}
		\begin{subfigure}{.5\textwidth}
			\centering
			\includegraphics[scale=0.5]{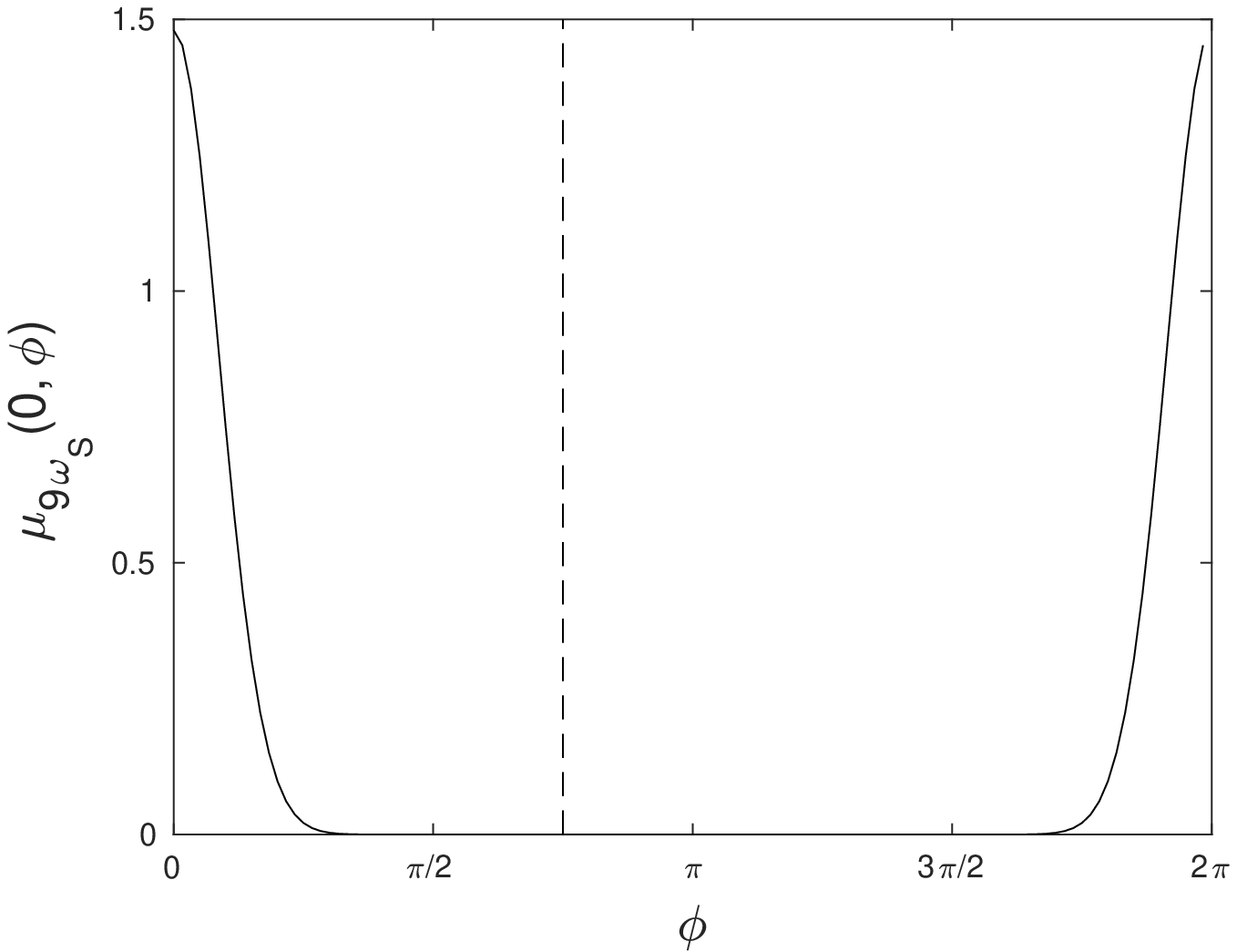}
			\caption{$t=0$}
		\end{subfigure}
		\begin{subfigure}{.5\textwidth}
			\centering
			\includegraphics[scale=0.5]{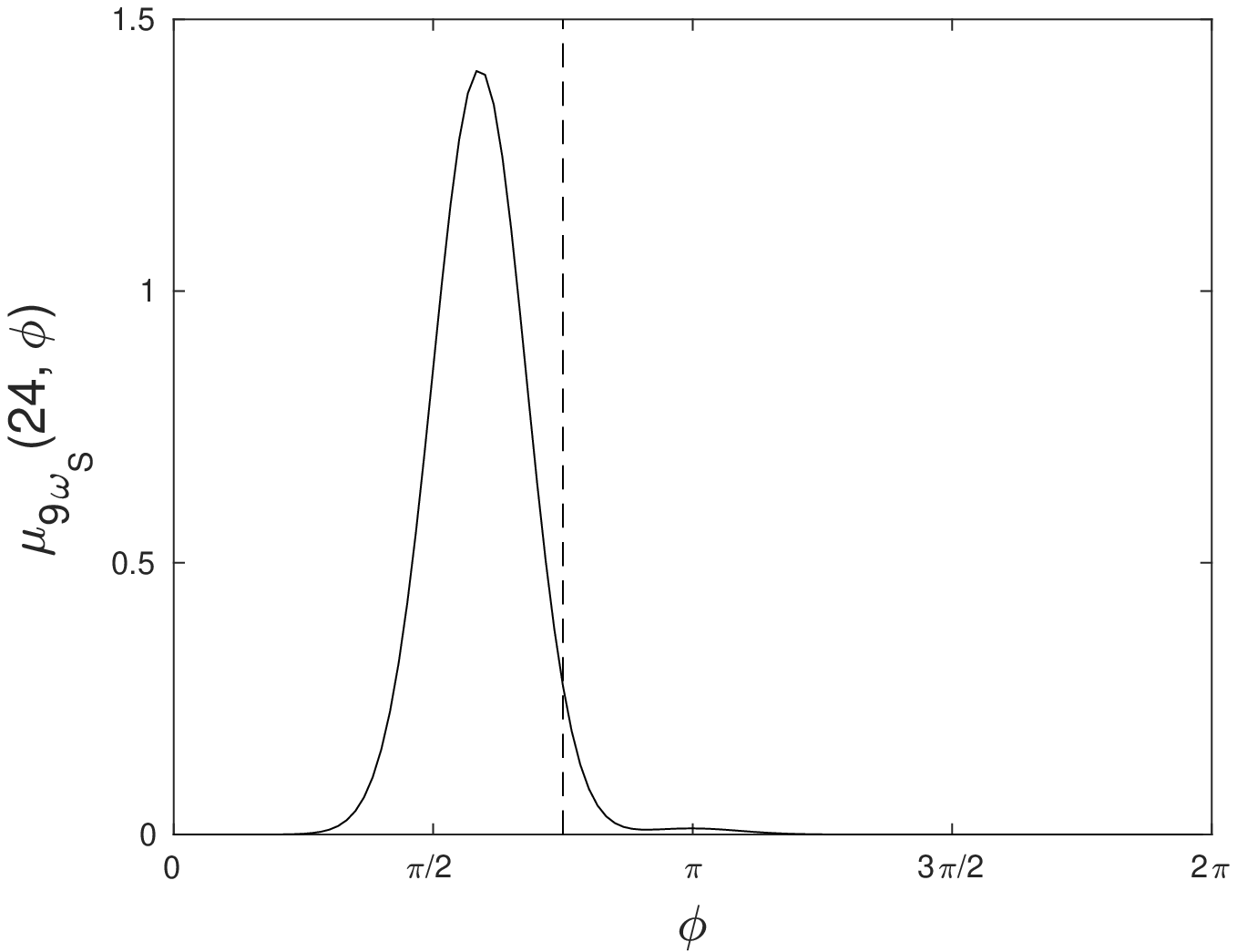}
			\caption{$t=1$ day}
		\end{subfigure}
		\\
		\begin{subfigure}{.5\textwidth}
			\centering
			\includegraphics[scale=0.5]{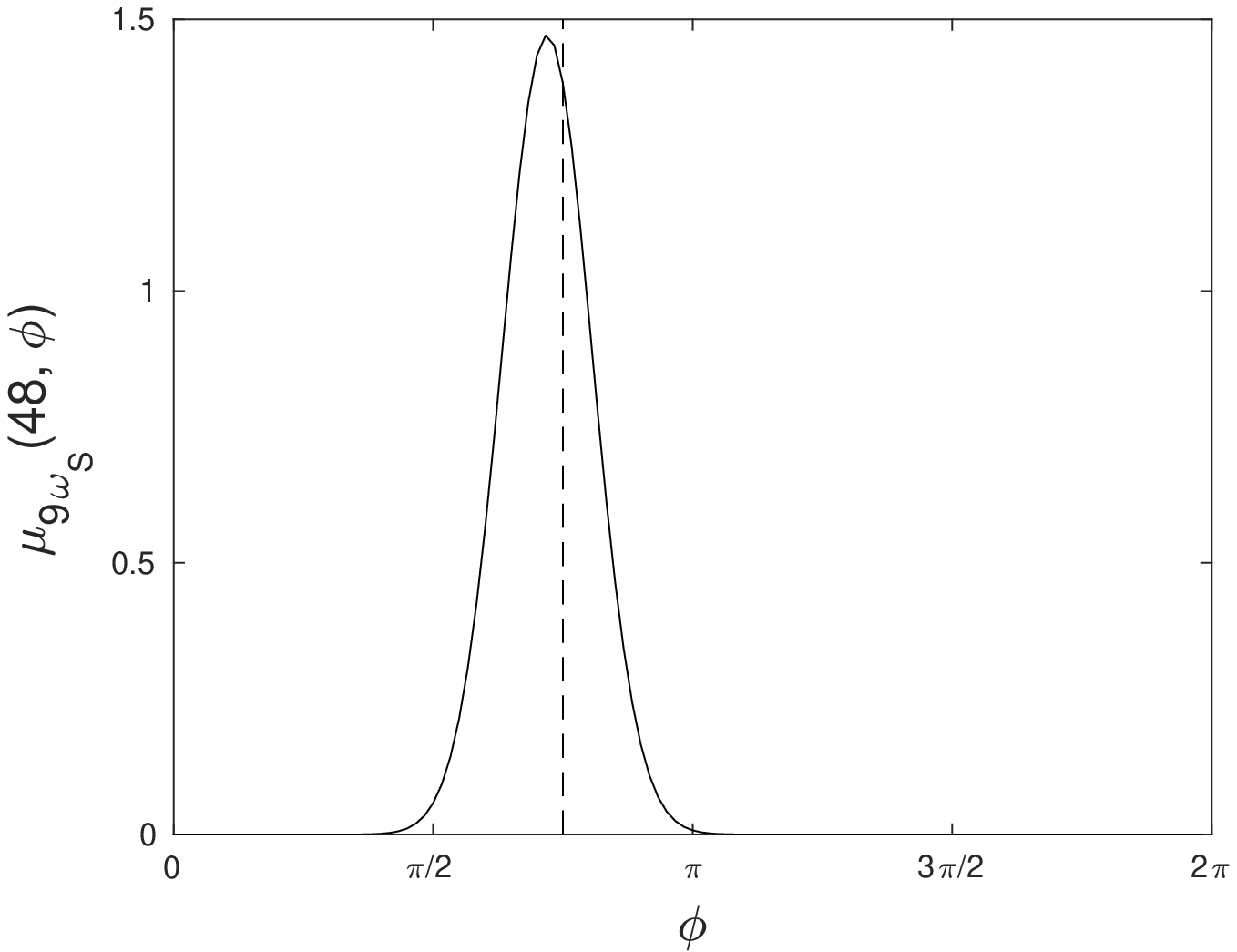}
			\caption{$t=2$ days}
		\end{subfigure}
		\begin{subfigure}{.5\textwidth}
			\centering
			\includegraphics[scale=0.5]{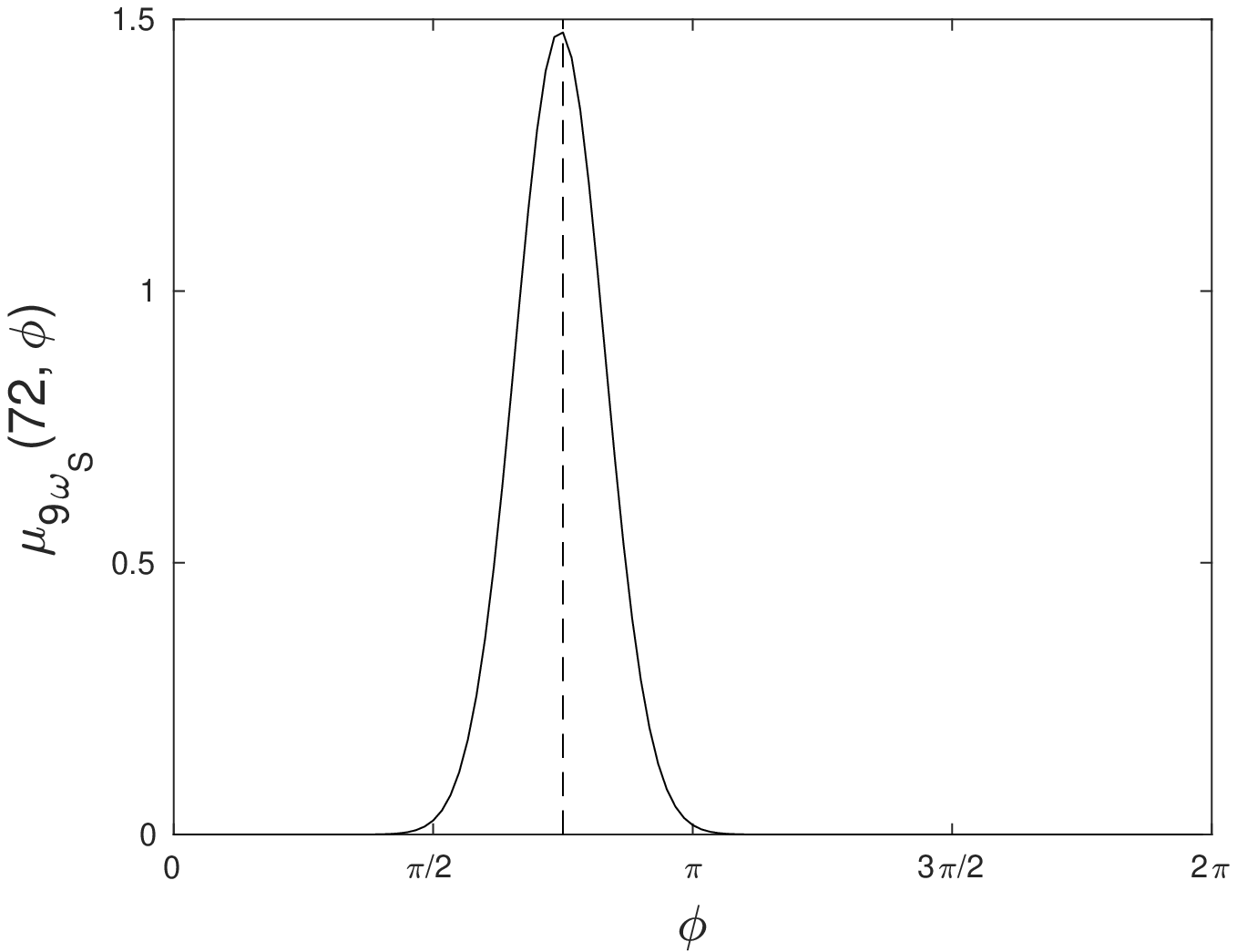}
			\caption{$t=3$ days}
		\end{subfigure}
		\caption{\textit{Recovery via Ergodic Problem}: Jet lag recovery after eastern travel by 9 time zones for the reference set of parameters.}
		\label{fig:Reference_East}
	\end{figure}

	\begin{figure}
		\begin{subfigure}{.5\textwidth}
			\centering
			\includegraphics[scale=0.5]{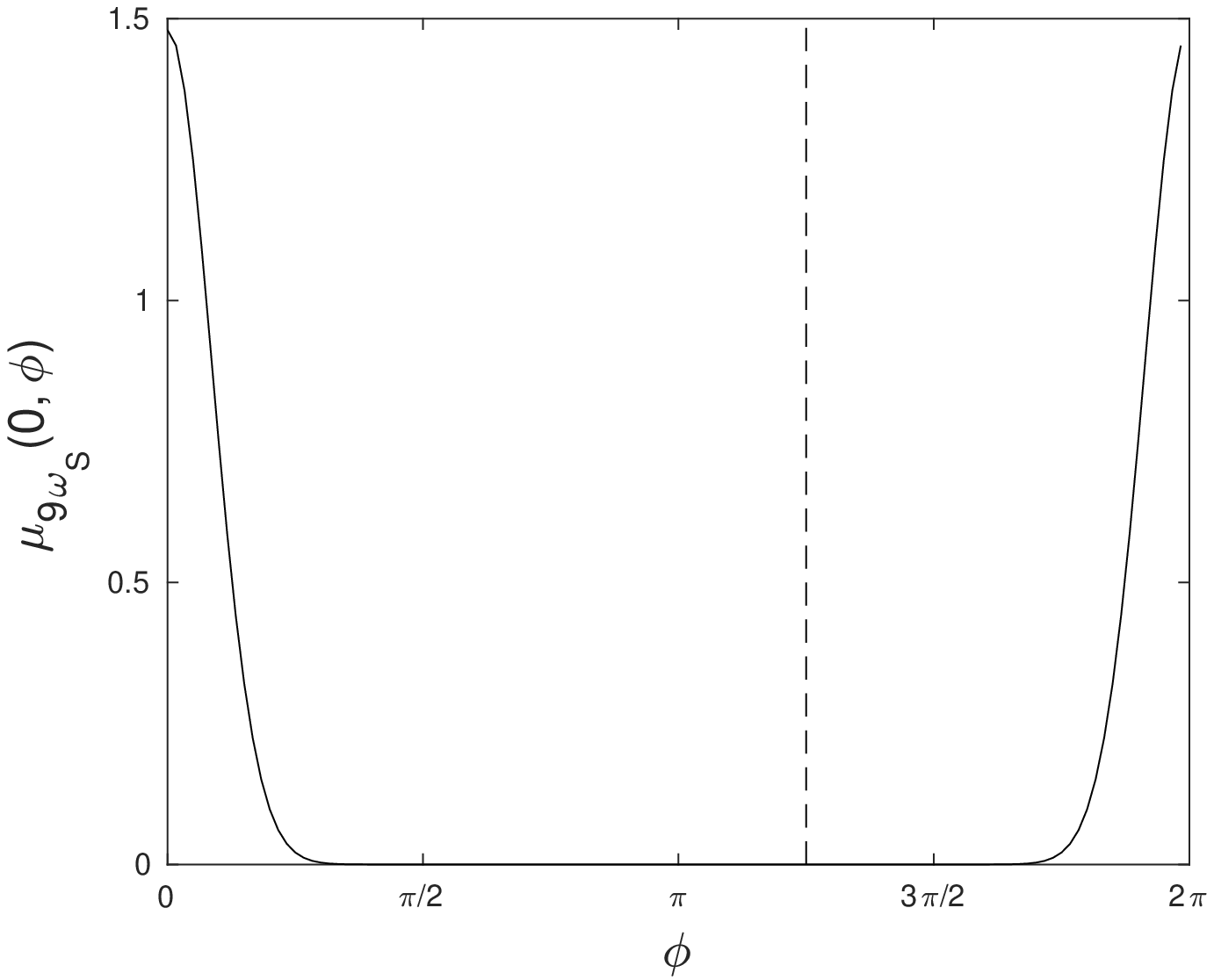}
			\caption{$t=0$}
		\end{subfigure}
		\begin{subfigure}{.5\textwidth}
			\centering
			\includegraphics[scale=0.5]{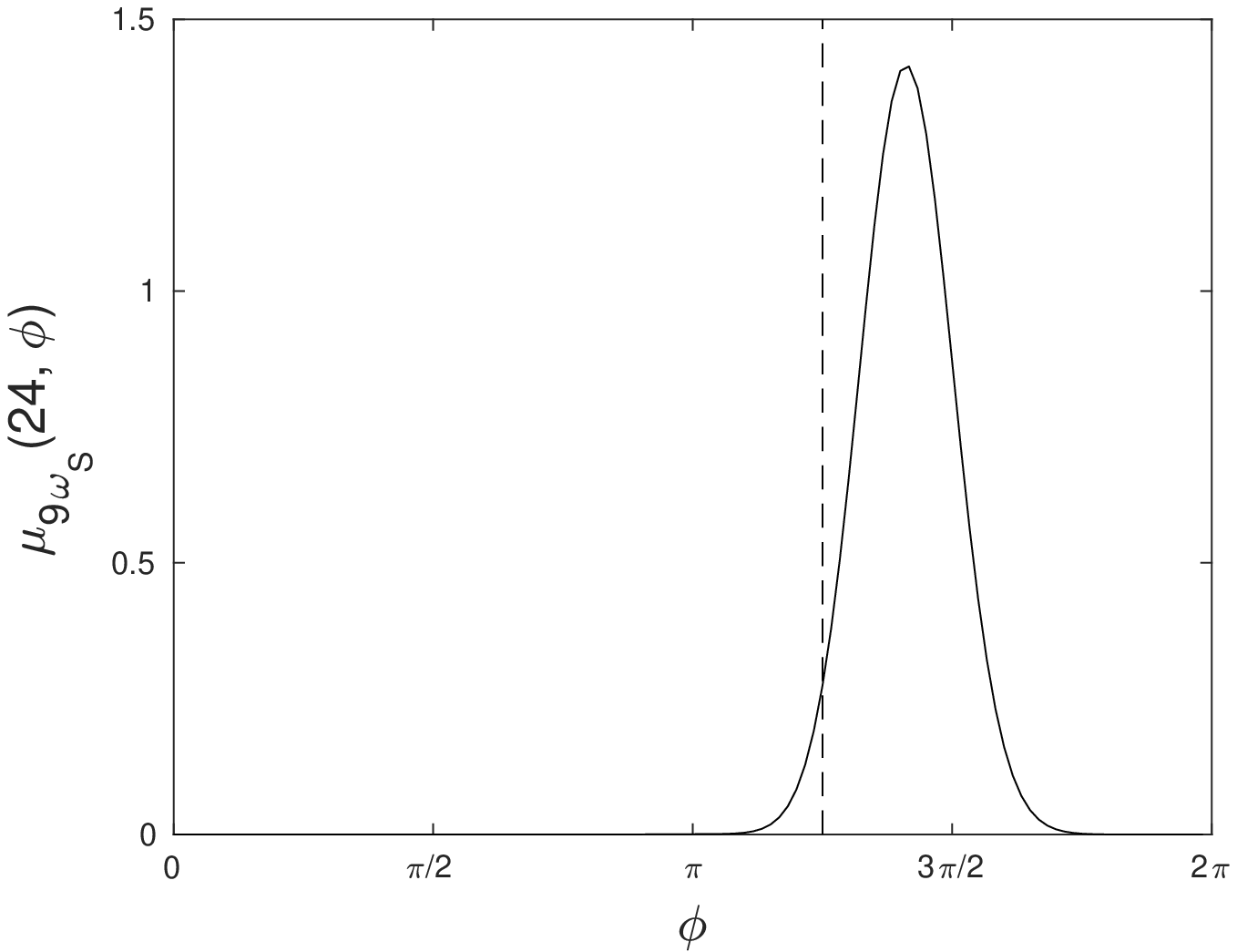}
			\caption{$t=1$ day}
		\end{subfigure}
		\\
		\begin{subfigure}{.5\textwidth}
			\centering
			\includegraphics[scale=0.5]{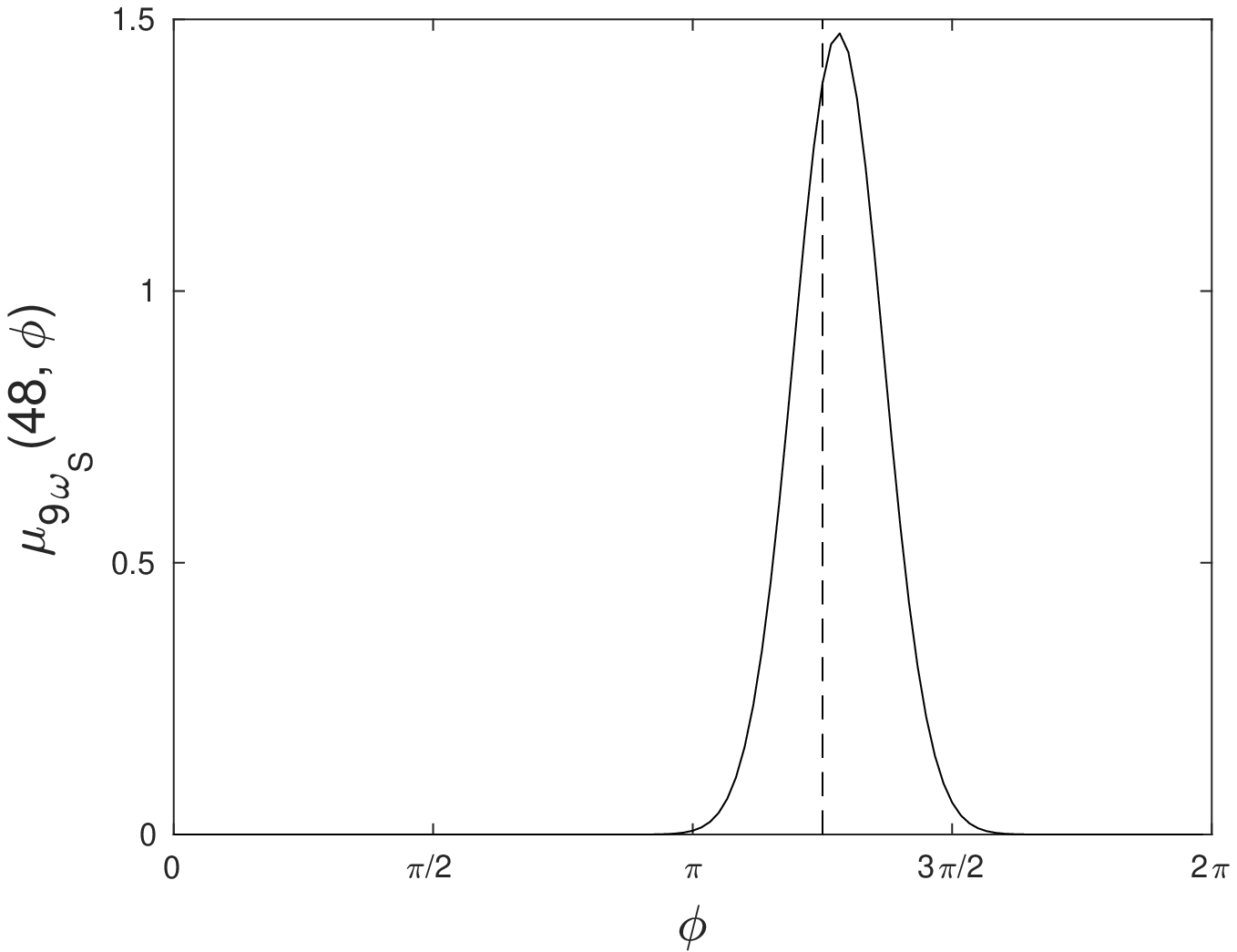}
			\caption{$t=2$ days}
		\end{subfigure}
		\begin{subfigure}{.5\textwidth}
			\centering
			\includegraphics[scale=0.5]{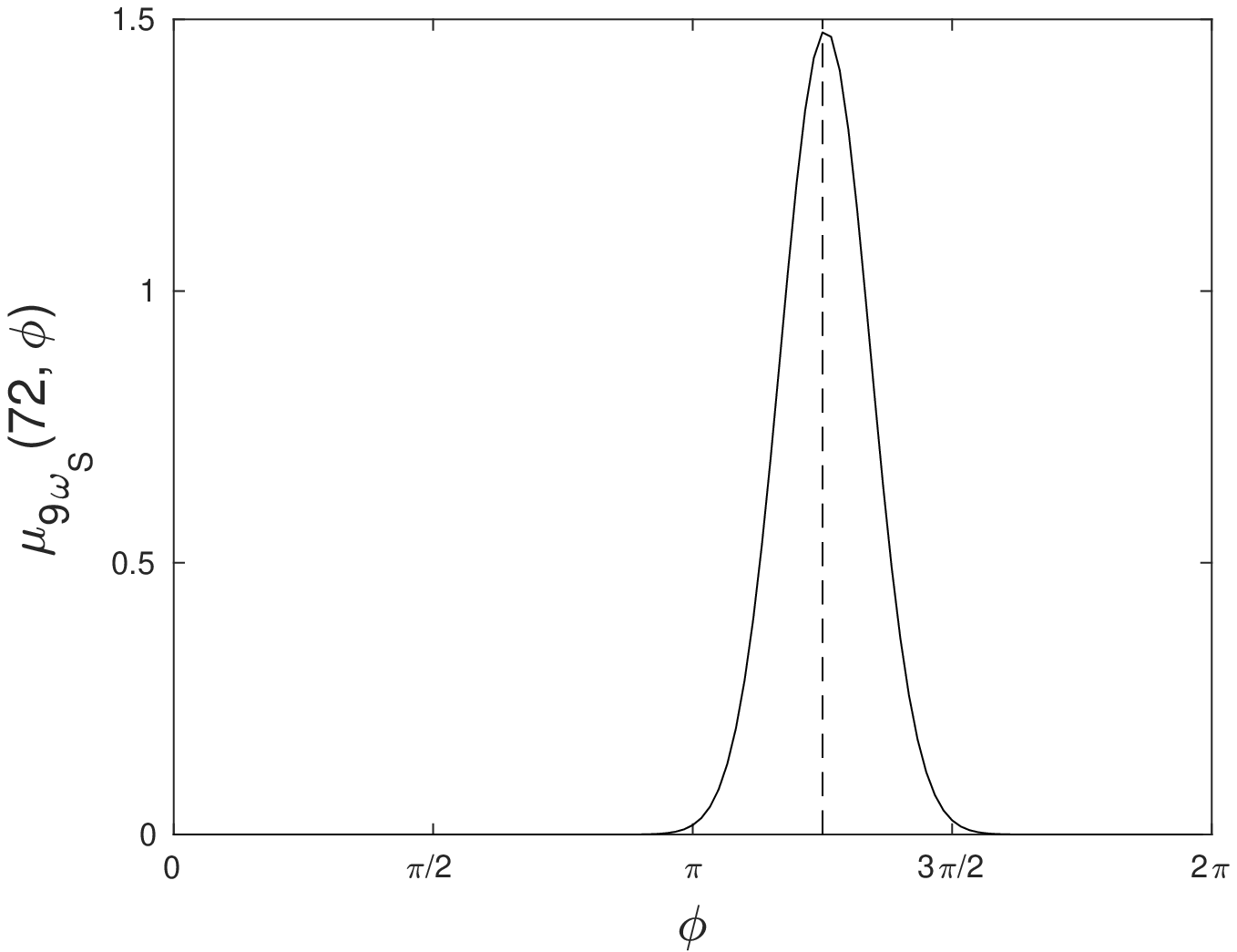}
			\caption{$t=3$ days}
		\end{subfigure}
		\caption{\textit{Recovery via Ergodic Problem}: Jet lag recovery after western travel by 9 time zones for the reference set of parameters.}
		\label{fig:Reference_West}
	\end{figure}

As shown in the plots, after a few days, the distribution of the oscillators has adjusted itself to align with the new value of $p$. Now we compare our measures of jet lag recovery between east and west travel to test the claim that jet lag is worse when traveling east.

The jet lag recovery times based on the 2-Wasserstein distance are $\tau^W_{9\omega_S}=4.38$ days and $\tau^W_{-9\omega_S}=4.42$ days, where we take $\epsilon^W=0.01$. The jet lag recovery times based on the order parameter $z$ are $\tau^z_{9\omega_S}=1.54$ days and $\tau^z_{-9\omega_S}=1.58$ days, where we take the same value of $\epsilon^z=0.2$ as in \cite{lu2016resynchronization}. Thus, jet lag recovery time is about the same for east and west travel for the reference set of parameters. The jet lag recovery costs are shown in Figure \ref{fig:Reference_Costs}. The cost from the controls, $f_{\alpha}(t)$, and the cost of synchronizing with the other oscillators, $f_{osc}(t)$, are larger for the eastward trip than the westward trip, while the cost for the 24 hour cycle, $f_{sun}(t)$, is the same for east and west. Thus, the total recovery cost is larger when recovering from traveling east. To summarize, recovery time is about the same for east and west travels, but there is a larger recovery cost associated with traveling east.

	\begin{figure}
		\begin{subfigure}{.5\textwidth}
			\centering
			\includegraphics[scale=0.5]{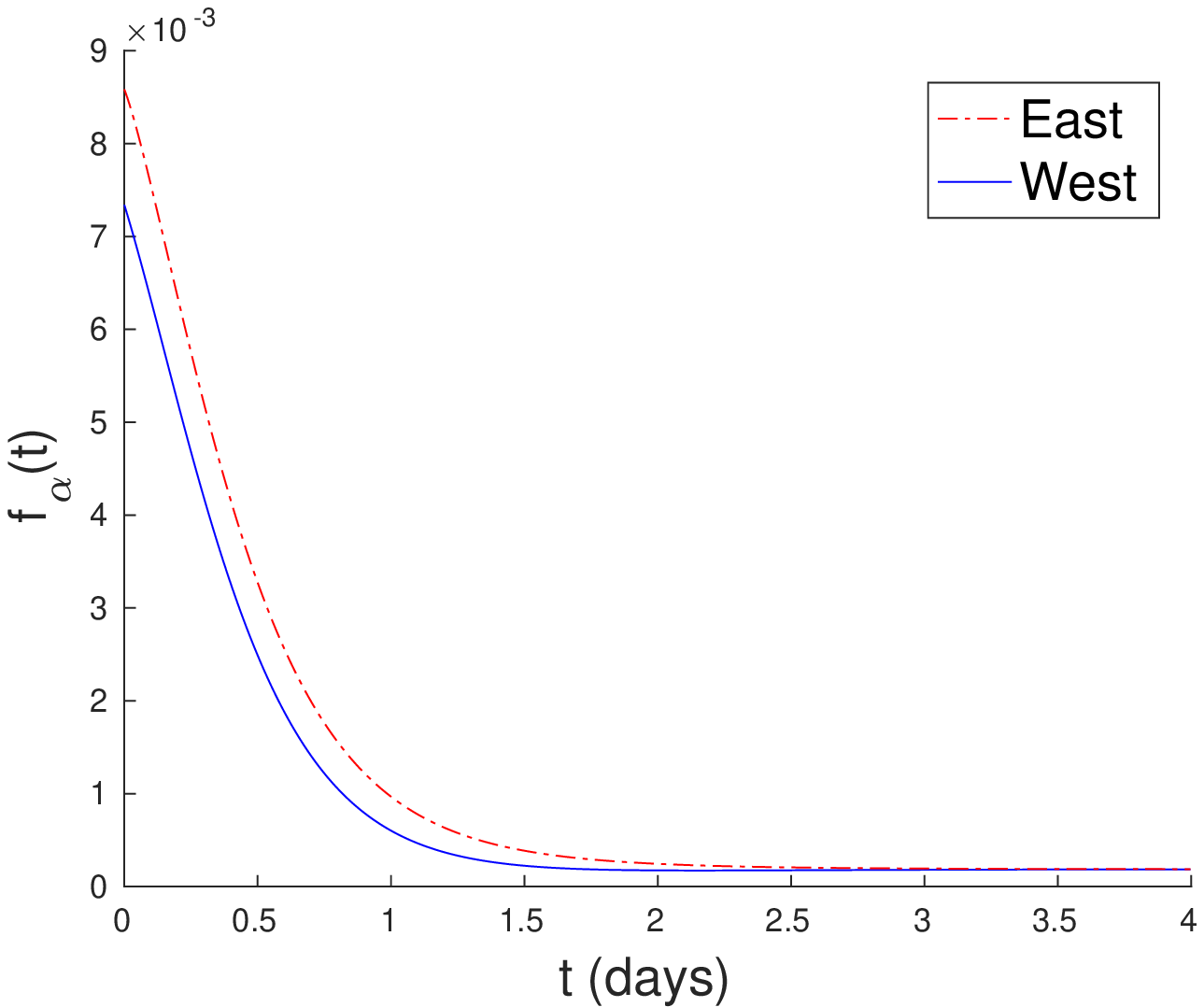}
			\caption{$f_{\alpha}(t)$}
		\end{subfigure}
		\begin{subfigure}{.5\textwidth}
			\centering
			\includegraphics[scale=0.5]{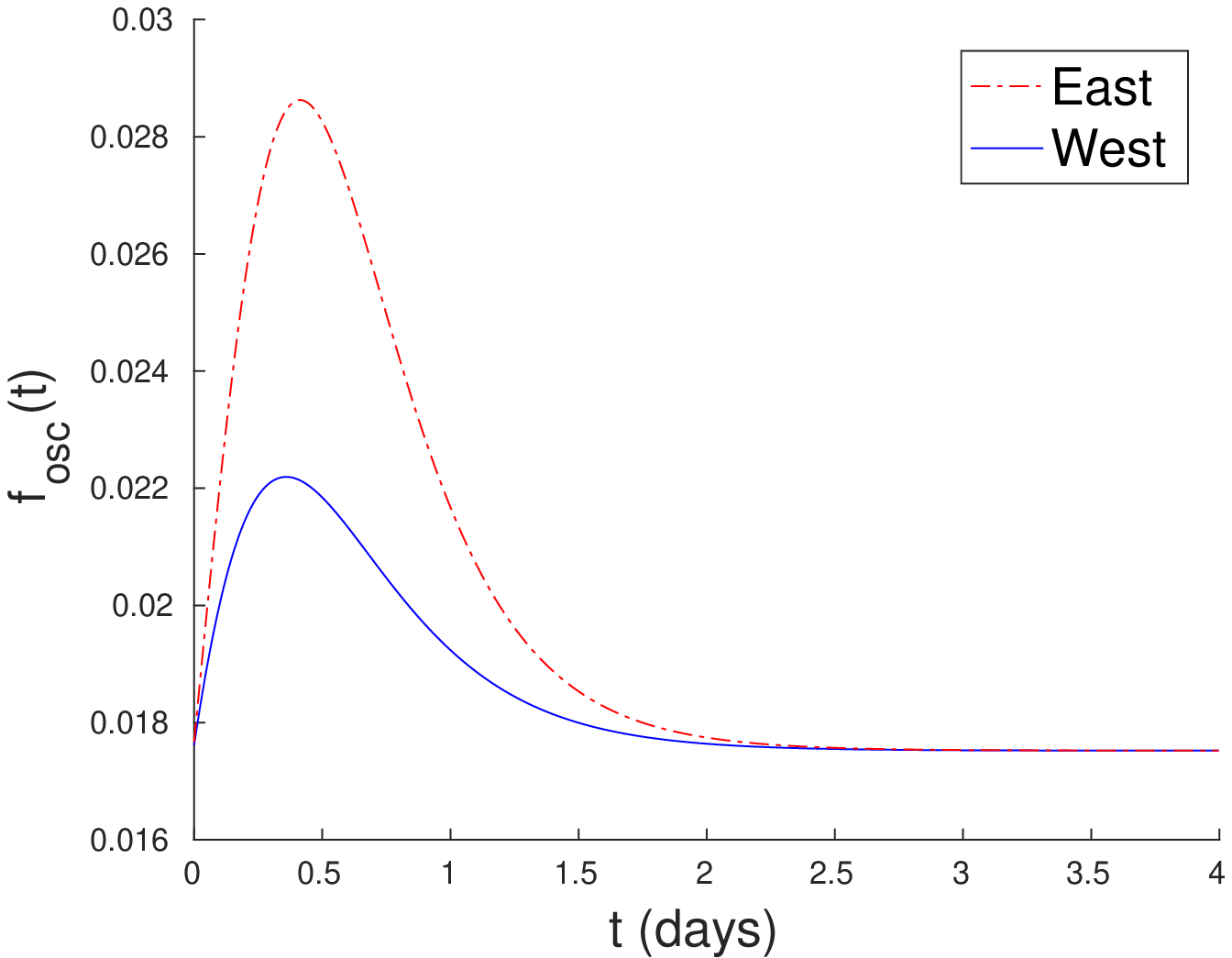}
			\caption{$f_{osc}(t)$}
		\end{subfigure}
		\\
		\begin{subfigure}{.5\textwidth}
			\centering
			\includegraphics[scale=0.5]{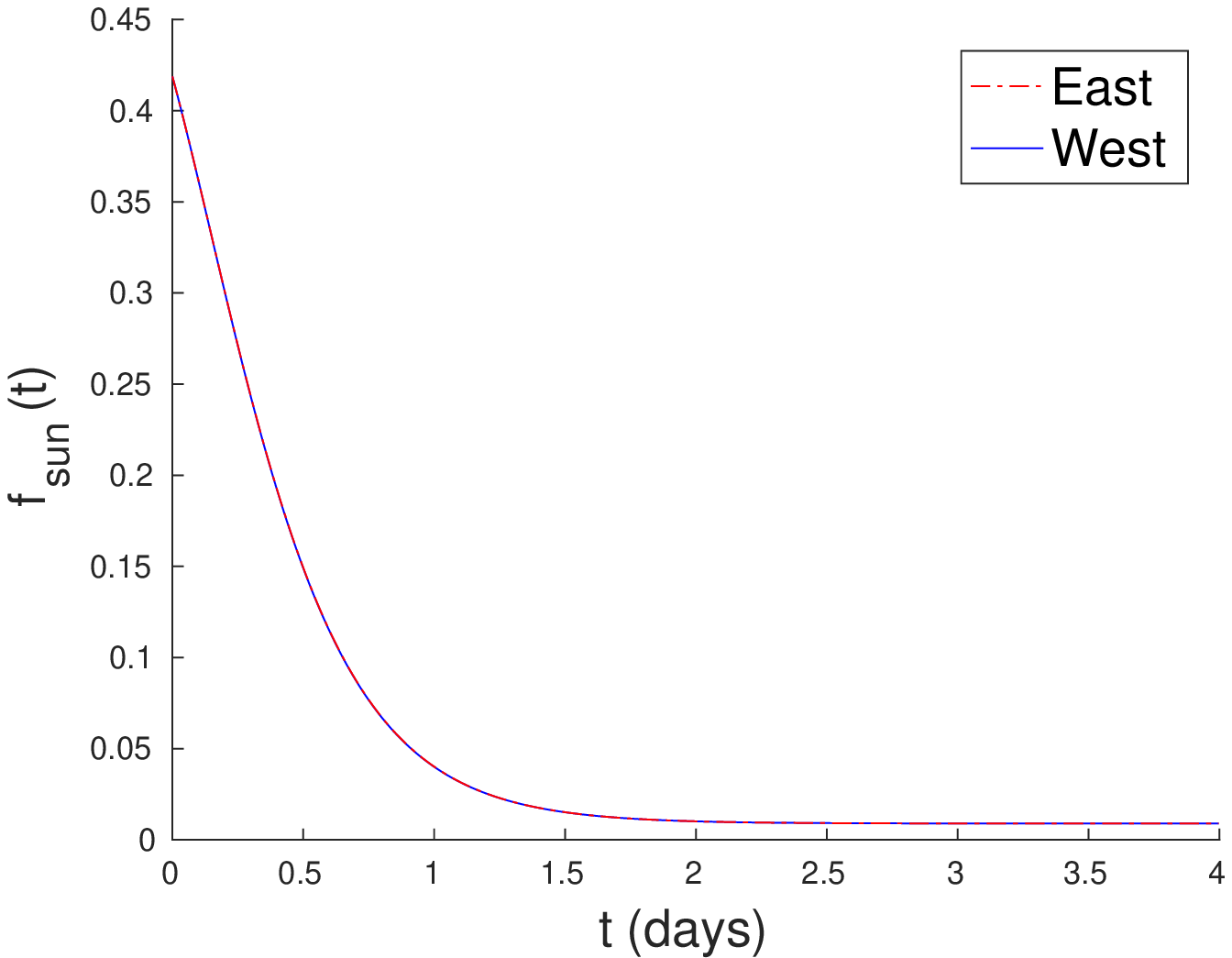}
			\caption{$f_{sun}(t)$}
		\end{subfigure}
		\begin{subfigure}{.5\textwidth}
			\centering
			\includegraphics[scale=0.5]{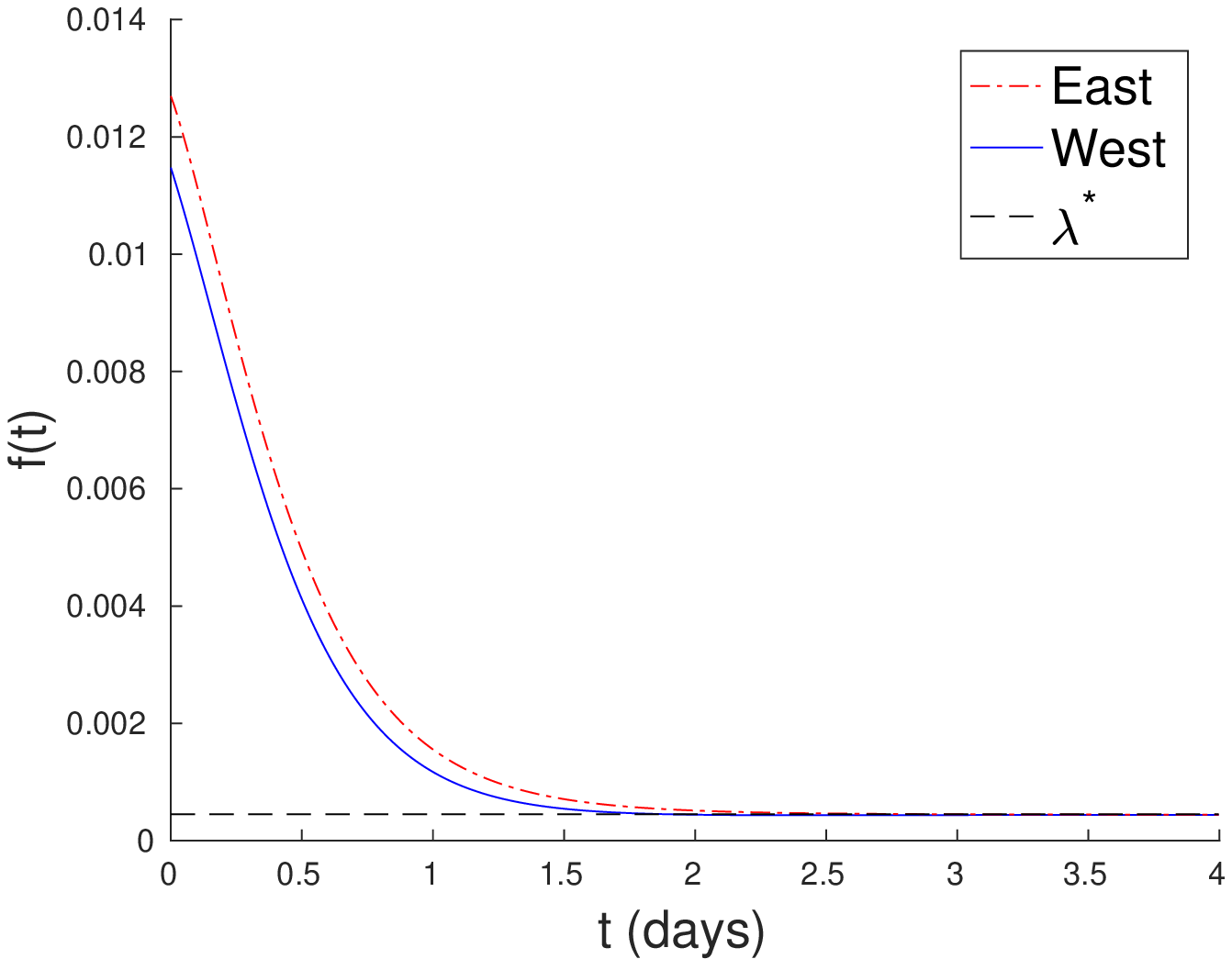}
			\caption{$f(t)$}
		\end{subfigure}
		\caption{\textit{Recovery via Ergodic Problem}: Jet lag recovery costs for travel by 9 time zones for the reference set of parameters.}
		\label{fig:Reference_Costs}
	\end{figure}

For comparison with Lu et al., Figure \ref{fig:Reference_z_t} shows the paths $z_p(t)$ for $p= \pm 9 \omega_S$. Recall that a larger $|z_p(t)|$ means that the oscillators are more synchronized with each other. The results are similar to Figure 2(a) in \cite{lu2016resynchronization}, except the stationary solution $z^*$ for our approach is closer to $(1,0)$ on the complex plane. Note that the presentation is slightly different in that we take $\rho(0)=0$ and $\rho(t)=p$ for $t>0$ whereas in \cite{lu2016resynchronization}, they take $\rho(0)=p$ and $\rho(t)=0$ for $t>0$.

		\begin{figure}
			\centering
			\includegraphics[scale=0.5]{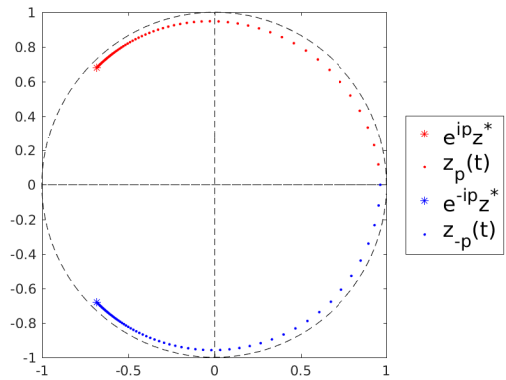}
			\caption{\textit{Recovery via Ergodic Problem}: Path $z_p(t)$ while recovering from jet lag after traveling $9$ time zones east (red) and west (blue) for the reference set of parameters. A point is plotted every hour.}
			\label{fig:Reference_z_t}
		\end{figure}

\subsection{Parameter Sensitivity Analysis for the Recovery via Ergodic Problem}
\hspace{5mm} Now we explore how the results change with the five parameters of interest: $p$, $\omega_0$, $\sigma$, $K$, and $F$. Unless otherwise specified, parameters remain at their reference values of $p=\pm 9 \omega_S$, $\omega_0=2\pi/24.5$, $\sigma=0.1$, $K=0.01$, and $F=0.01$. We are somewhat restricted because we can only obtain results for the \textit{Recovery via Ergodic Problem} when our numerical algorithm for solving the \textit{Ergodic Mean Field Game Problem} converges (otherwise, we do not have an initial condition and a control to feed into the \textit{Recovery via Ergodic Problem}). In particular, when using the centered scheme with \textit{Method 1}, the values of $K$ and $F$ need to be sufficiently small for the algorithm to converge (hence, our choice of $K=0.01$, and $F=0.01$ in the reference set). In addition, we should not consider values of $F$ which are too small, since when $F=0$, it is known that uniqueness does not hold \cite{yin2012synchronization}. Despite these limitations, we still get a picture of the behavior of the solutions as we vary each parameter.

Figure \ref{fig:Changing_p} shows the two jet lag recovery times, $\tau_p^W$ and $\tau_p^z$, as we vary $p$. The east and west recovery times almost completely coincide, except for deviations at $|p| \geq 9 \omega_S$. The costs accrued over the first 10 days are summarized in Figure \ref{fig:Changing_p_costs}. The most interesting result is Figure \ref{fig:Changing_p_costs}(a) which shows a larger cost for eastward trips with an increasing disparity between east and west as $|p|$ increases from $1\omega_S$ through $11\omega_S$. Note that in our model, a trip of $12$ time zones is the same if viewed as an eastward or a westward trip, and the results for $p= \pm 12 \omega_S$ coincide.

		\begin{figure}
			\centering
			\includegraphics[scale=0.5]{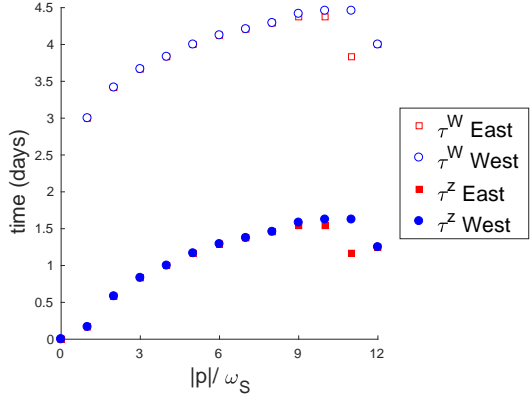}
			\caption{\textit{Recovery via Ergodic Problem}: Jet lag recovery times as a function of $p$.}
			\label{fig:Changing_p}
		\end{figure}
		
	\begin{figure}
		\begin{subfigure}{.5\textwidth}
			\centering
			\includegraphics[scale=0.5]{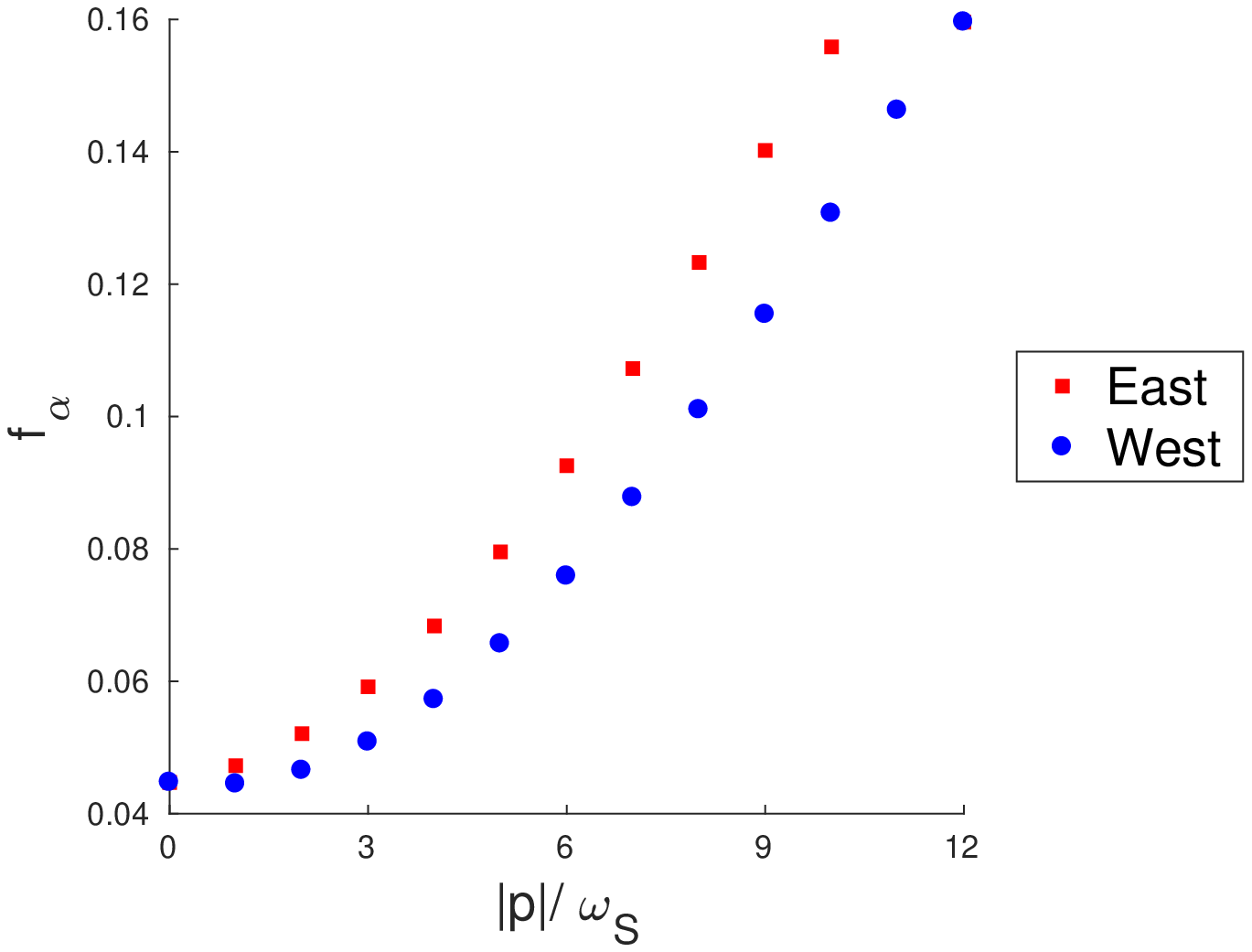}
			\caption{$f_{\alpha}(t)$}
		\end{subfigure}
		\begin{subfigure}{.5\textwidth}
			\centering
			\includegraphics[scale=0.5]{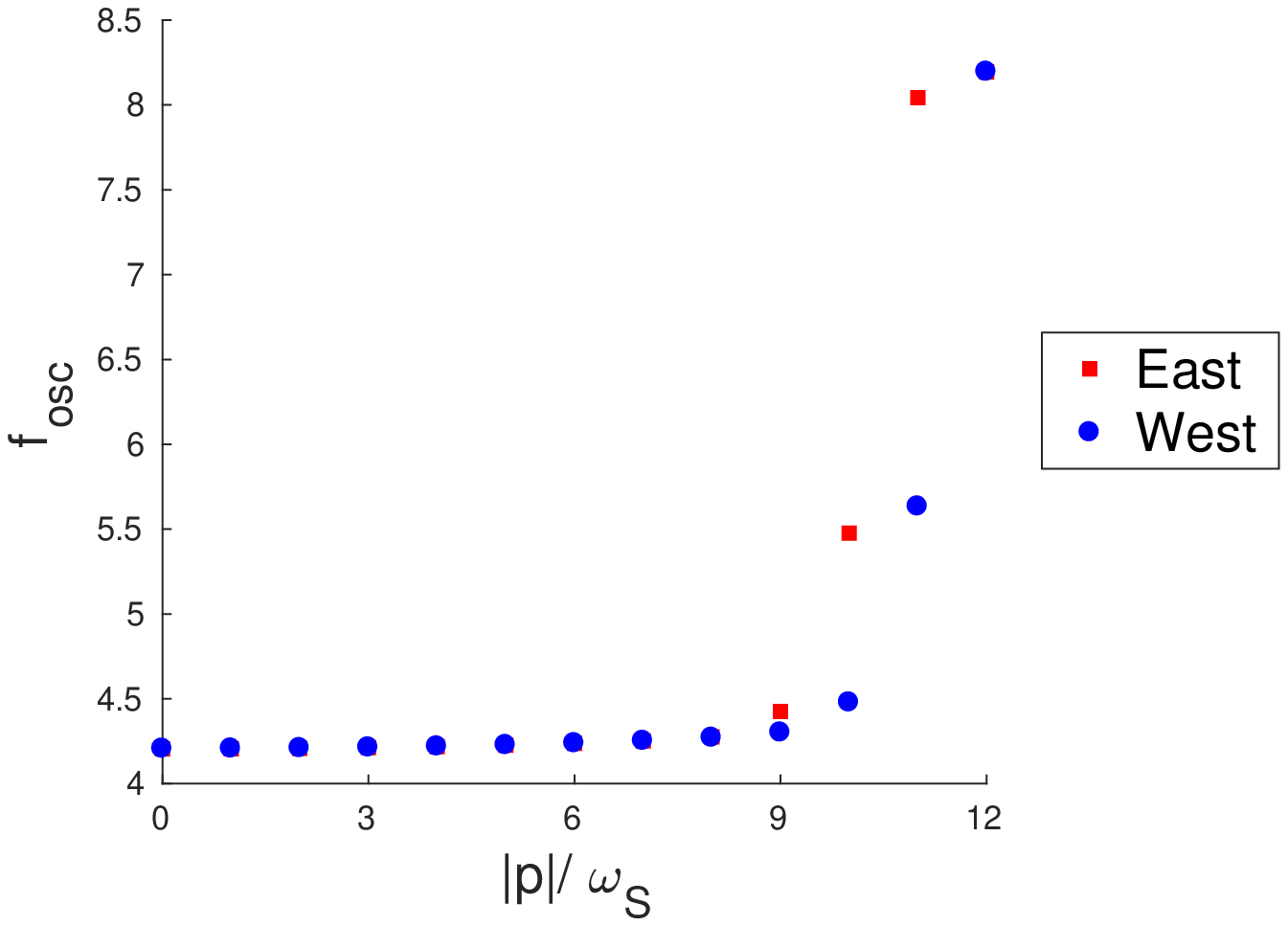}
			\caption{$f_{osc}(t)$}
		\end{subfigure}
		\\
		\begin{subfigure}{.5\textwidth}
			\centering
			\includegraphics[scale=0.5]{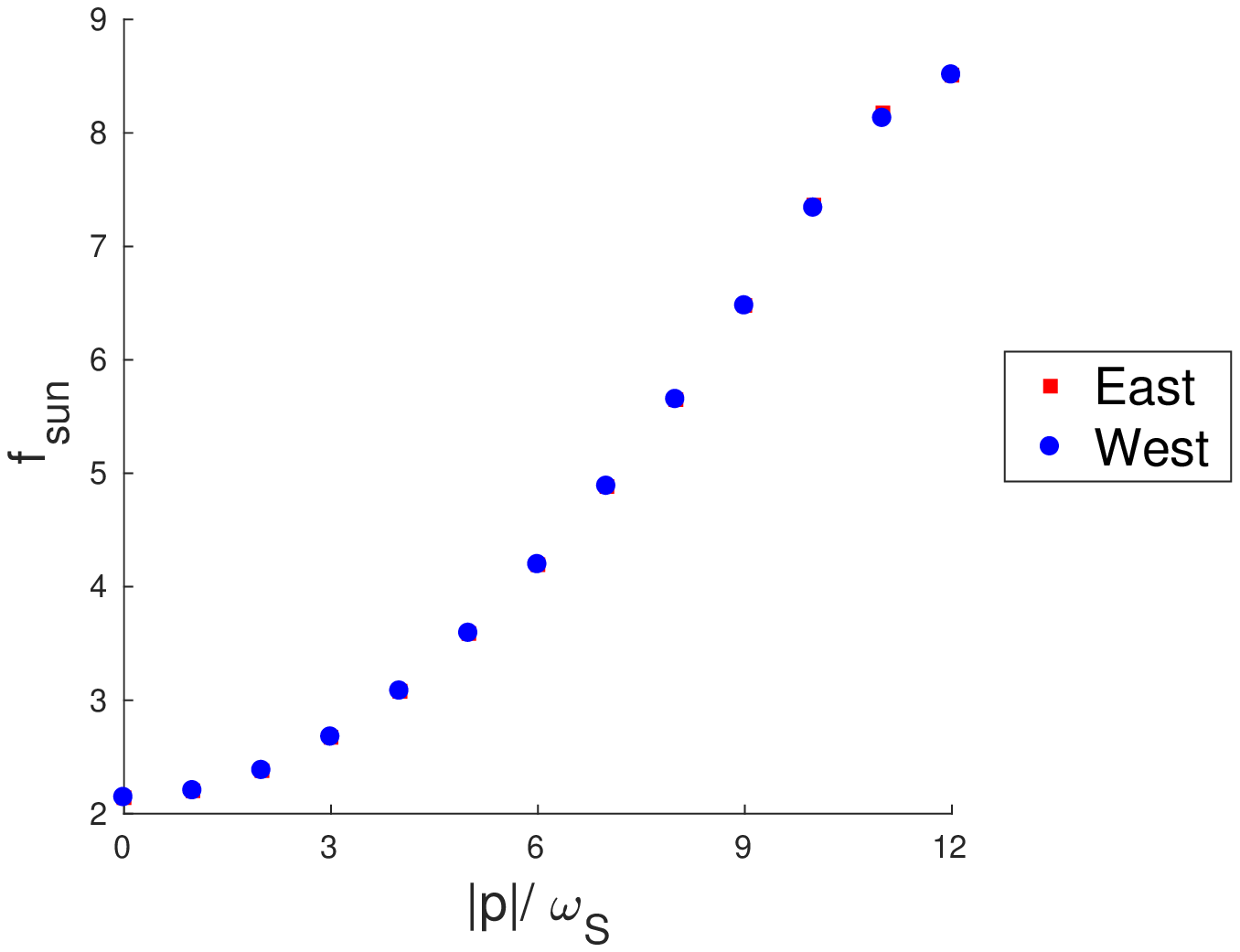}
			\caption{$f_{sun}(t)$}
		\end{subfigure}
		\begin{subfigure}{.5\textwidth}
			\centering
			\includegraphics[scale=0.5]{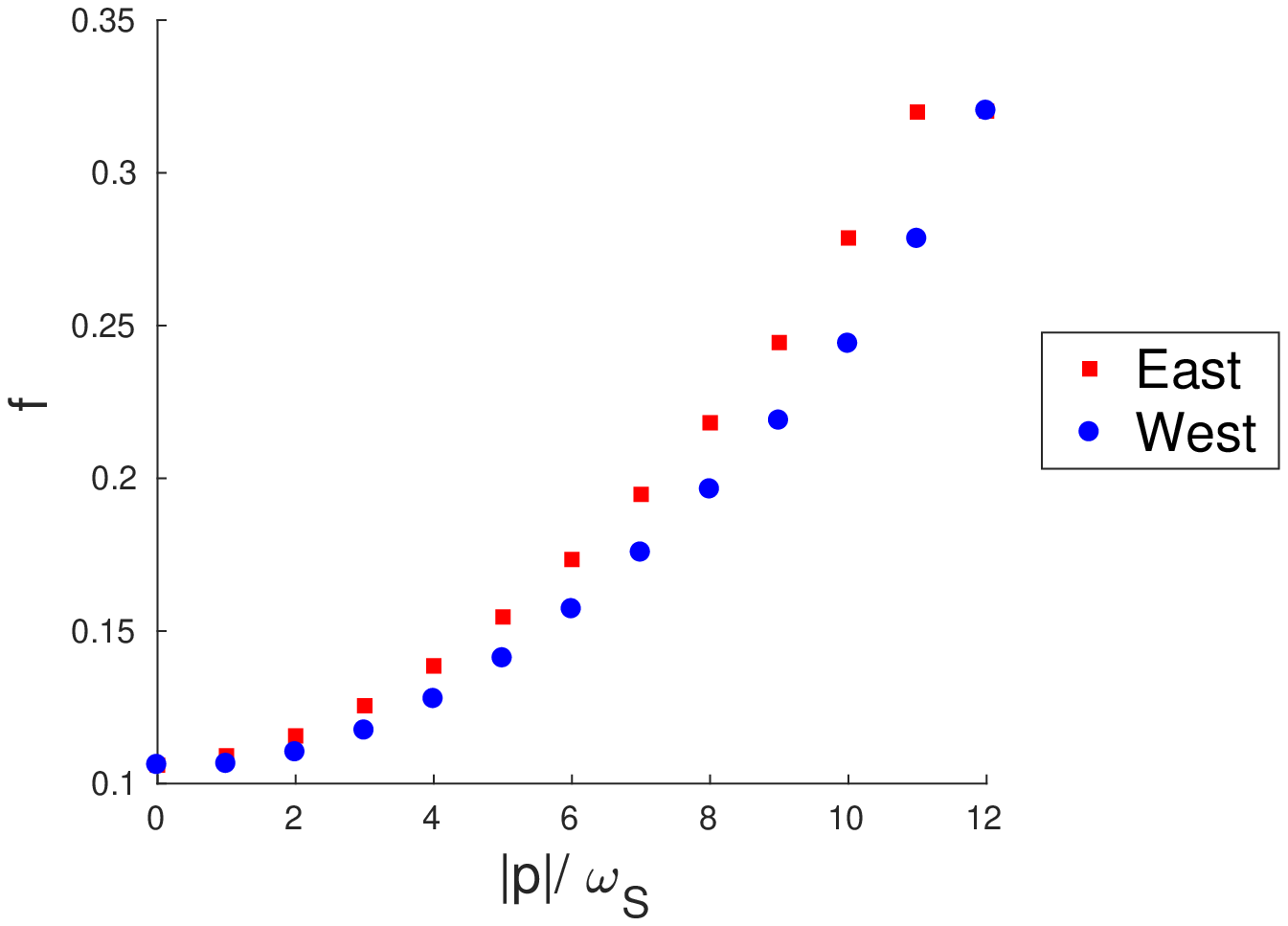}
			\caption{$f(t)$}
		\end{subfigure}
		\caption{\textit{Recovery via Ergodic Problem}: Jet lag recovery costs as a function of $p$. Note that a trip of $12$ time zones is the same when viewed as an eastward or westward trip.}
		\label{fig:Changing_p_costs}
	\end{figure}
	
The results of changing $\omega_0$ are shown in Figures \ref{fig:Changing_omega} and \ref{fig:Changing_omega_costs}. As for the previous results, we see that the recovery time is about the same for east and west travels, but the cost is larger for east travels when $\omega_0<\omega_S$. By symmetry, we expect that if $\omega_0-\omega_S=\Omega$ and $p=+j\omega_S$, the results should be the same as when $\omega_0-\omega_S=-\Omega$ and $p=-j\omega_S$. This is confirmed in Figures \ref{fig:Changing_omega} and \ref{fig:Changing_omega_costs}.

Returning to our analysis of Figure \ref{fig:Changing_omega}, we note that the recovery time is fairly stable as we change $\omega_0$ to different values near $\omega_S$, and recovery times increase for values of $\omega_0$ far from $\omega_S$. The general trend in Figure \ref{fig:Changing_omega_costs} is that the jet lag recovery costs increase as $\omega_0$ goes further away from $\omega_S$. A surprising result, however, is that for the extremal values of $\omega_0=2\pi/18$ or $\omega_0=2\pi/36$, the cost from the control, $f_{\alpha}$, decreases.
	
		\begin{figure}
			\centering
			\includegraphics[scale=0.5]{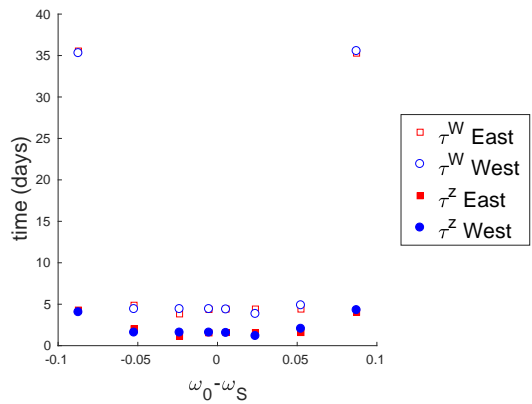}
			\caption{\textit{Recovery via Ergodic Problem}: Jet lag recovery times as a function of $\omega_0$.}
			\label{fig:Changing_omega}
		\end{figure}
		
		\begin{figure}
			\begin{subfigure}{.5\textwidth}
				\centering
				\includegraphics[scale=0.5]{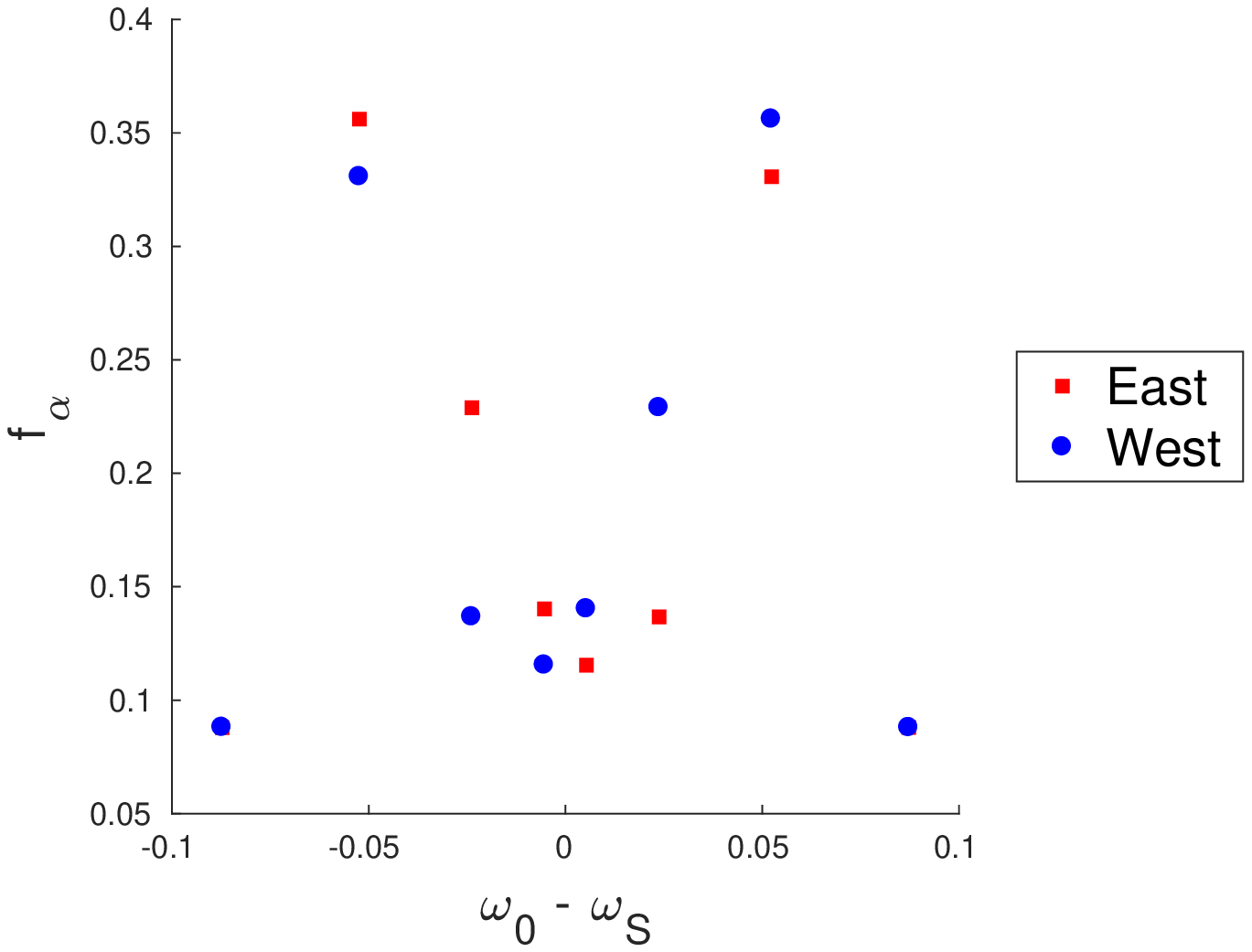}
				\caption{$f_{\alpha}(t)$}
			\end{subfigure}
			\begin{subfigure}{.5\textwidth}
				\centering
				\includegraphics[scale=0.5]{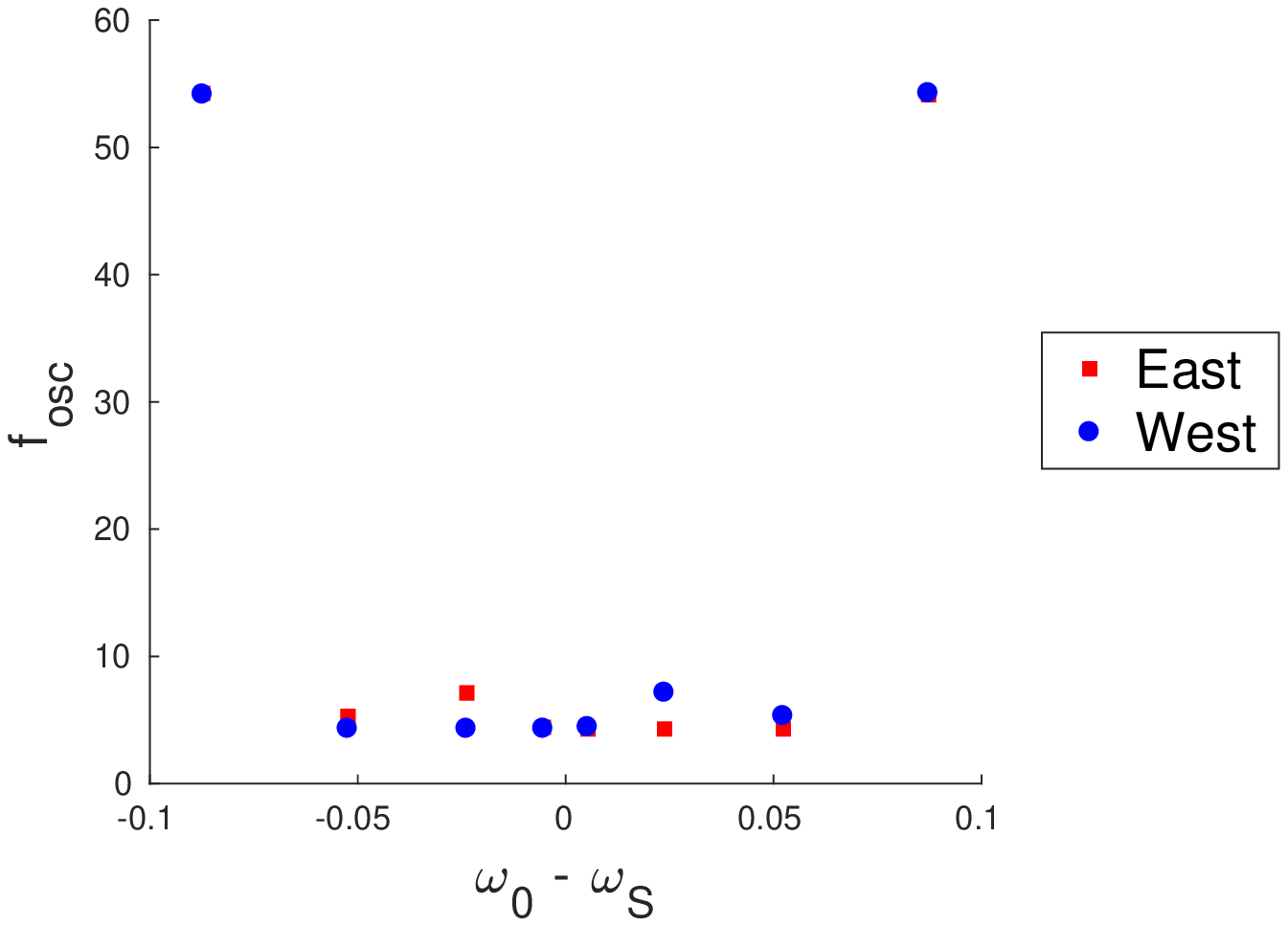}
				\caption{$f_{osc}(t)$}
			\end{subfigure}
			\\
			\begin{subfigure}{.5\textwidth}
				\centering
				\includegraphics[scale=0.5]{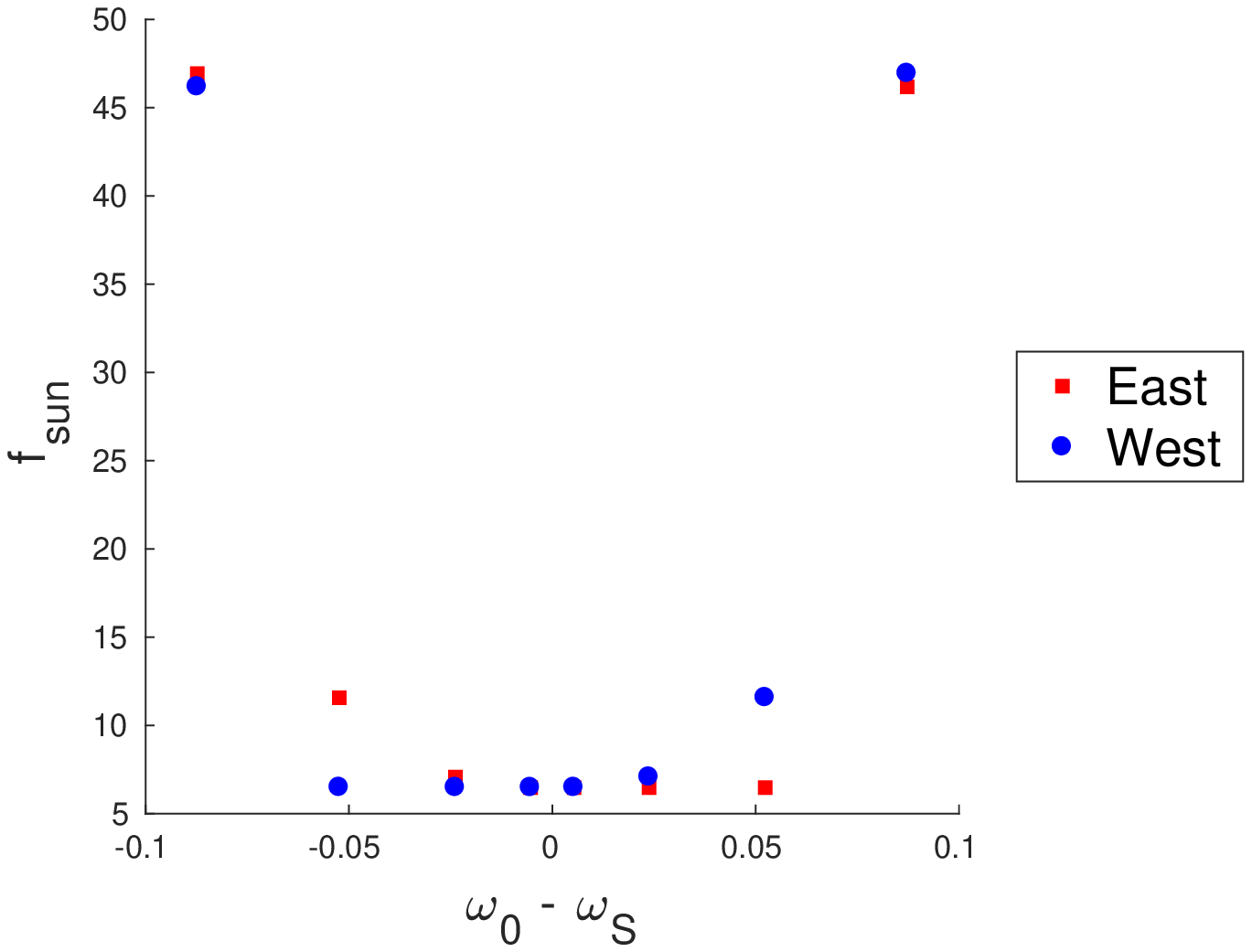}
				\caption{$f_{sun}(t)$}
			\end{subfigure}
			\begin{subfigure}{.5\textwidth}
				\centering
				\includegraphics[scale=0.5]{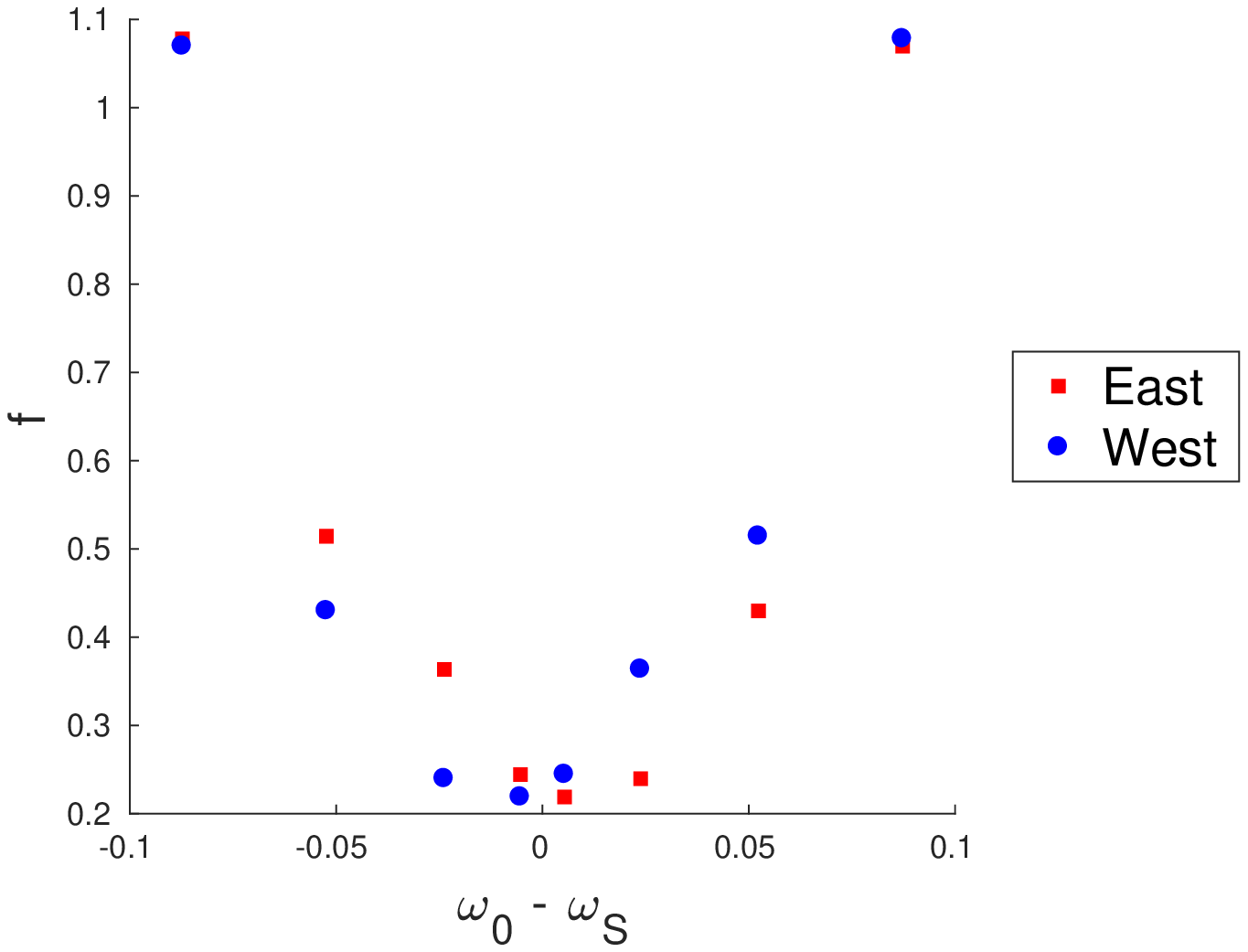}
				\caption{$f(t)$}
			\end{subfigure}
			\caption{\textit{Recovery via Ergodic Problem}: Jet lag recovery costs as a function of $\omega_0$.}
			\label{fig:Changing_omega_costs}
		\end{figure}
		
To understand why it takes so much longer to recover for the extremal values of $\omega_0-\omega_S$, Figure \ref{fig:z_t_omega} shows the paths $z_p(t)$ for $p= \pm 9 \omega_S$ for different values of $\omega_0$. For values of $\omega_0$ closer to $\omega_S$, as in Figure \ref{fig:z_t_omega}(a) and Figure \ref{fig:z_t_omega}(b), the oscillators phase advance after traveling east and phase delay after traveling west. If $\omega_0$ is much smaller than $\omega_S$, as in Figure \ref{fig:z_t_omega}(c) and Figure \ref{fig:z_t_omega}(d), the oscillators recover by phase delaying for both east and west travels. Clearly by symmetry, if $\omega_0$ is much larger than $\omega_S$, then the oscillators recover by phase advancing for both east and west travels. It is interesting to note that in Figure \ref{fig:z_t_omega}(d) the path for recovery is not direct as in the other cases, which is why it takes so much longer to recover.
		
		\begin{figure}
			\begin{subfigure}{.5\textwidth}
				\centering
				\includegraphics[scale=0.5]{Reference_z_t.eps}
				\caption{$\omega_0=2\pi/24.5$}
			\end{subfigure}
			\begin{subfigure}{.5\textwidth}
				\centering
				\includegraphics[scale=0.5]{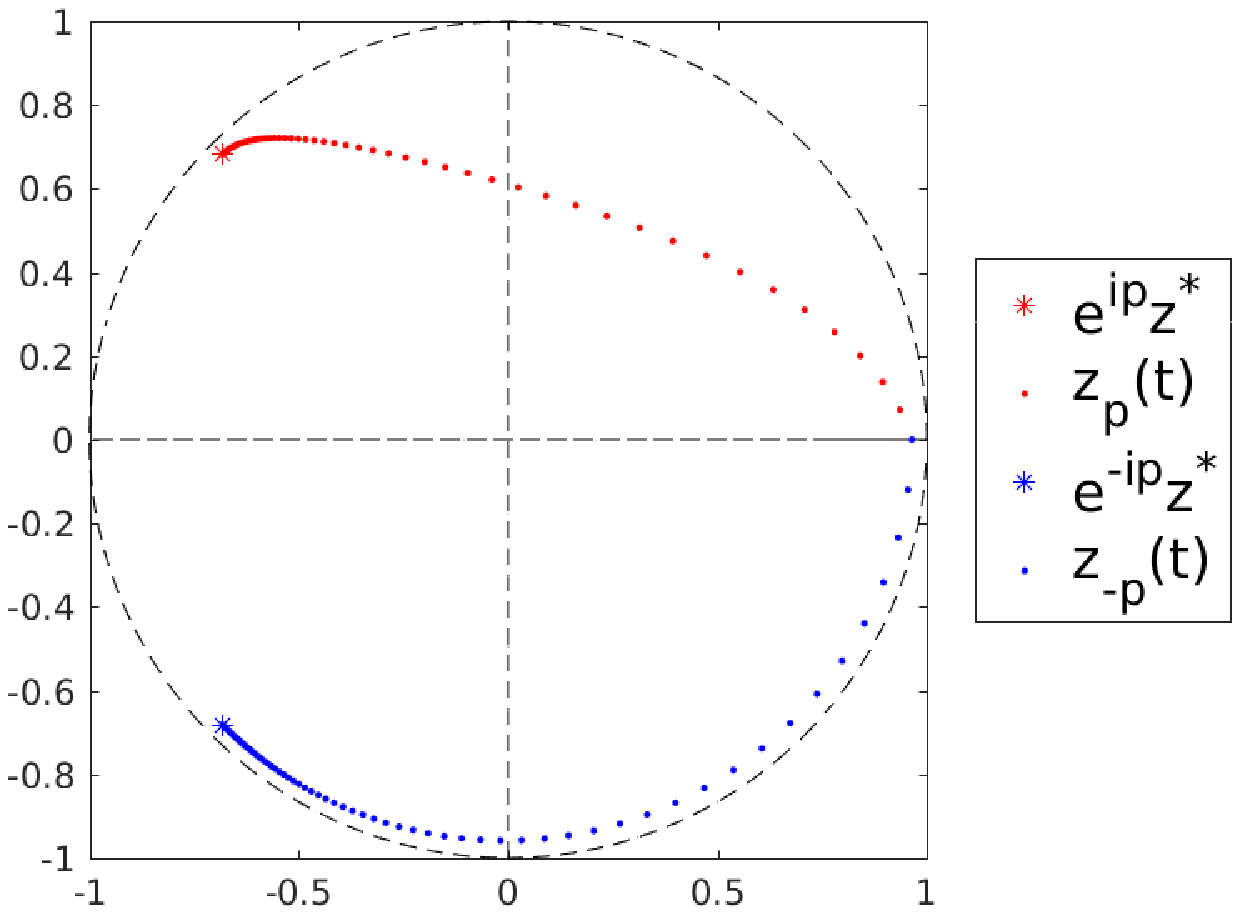}
				\caption{$\omega_0=2\pi/26.4$}
			\end{subfigure}
			\\
			\begin{subfigure}{.5\textwidth}
				\centering
				\includegraphics[scale=0.5]{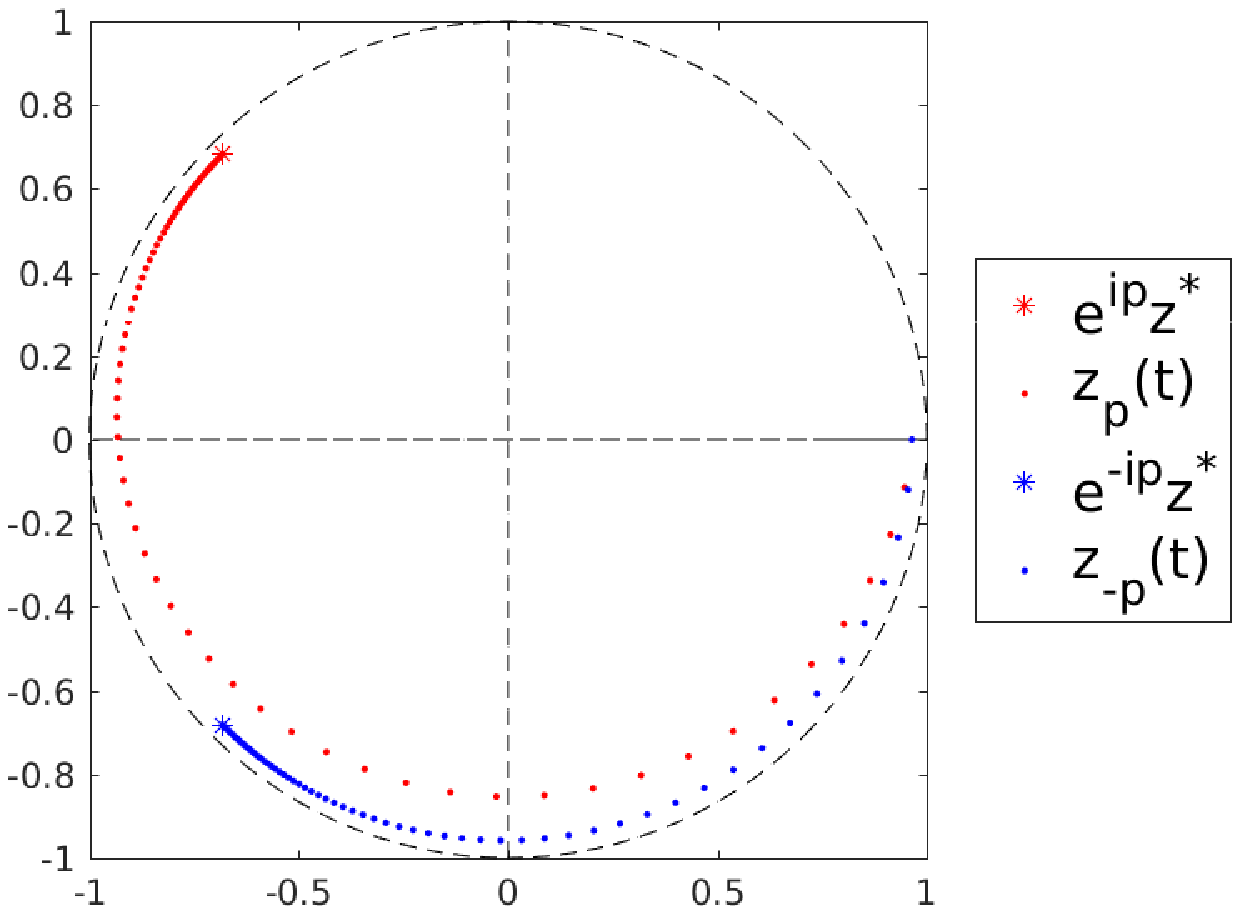}
				\caption{$\omega_0=2\pi/30$}
			\end{subfigure}
			\begin{subfigure}{.5\textwidth}
				\centering
				\includegraphics[scale=0.5]{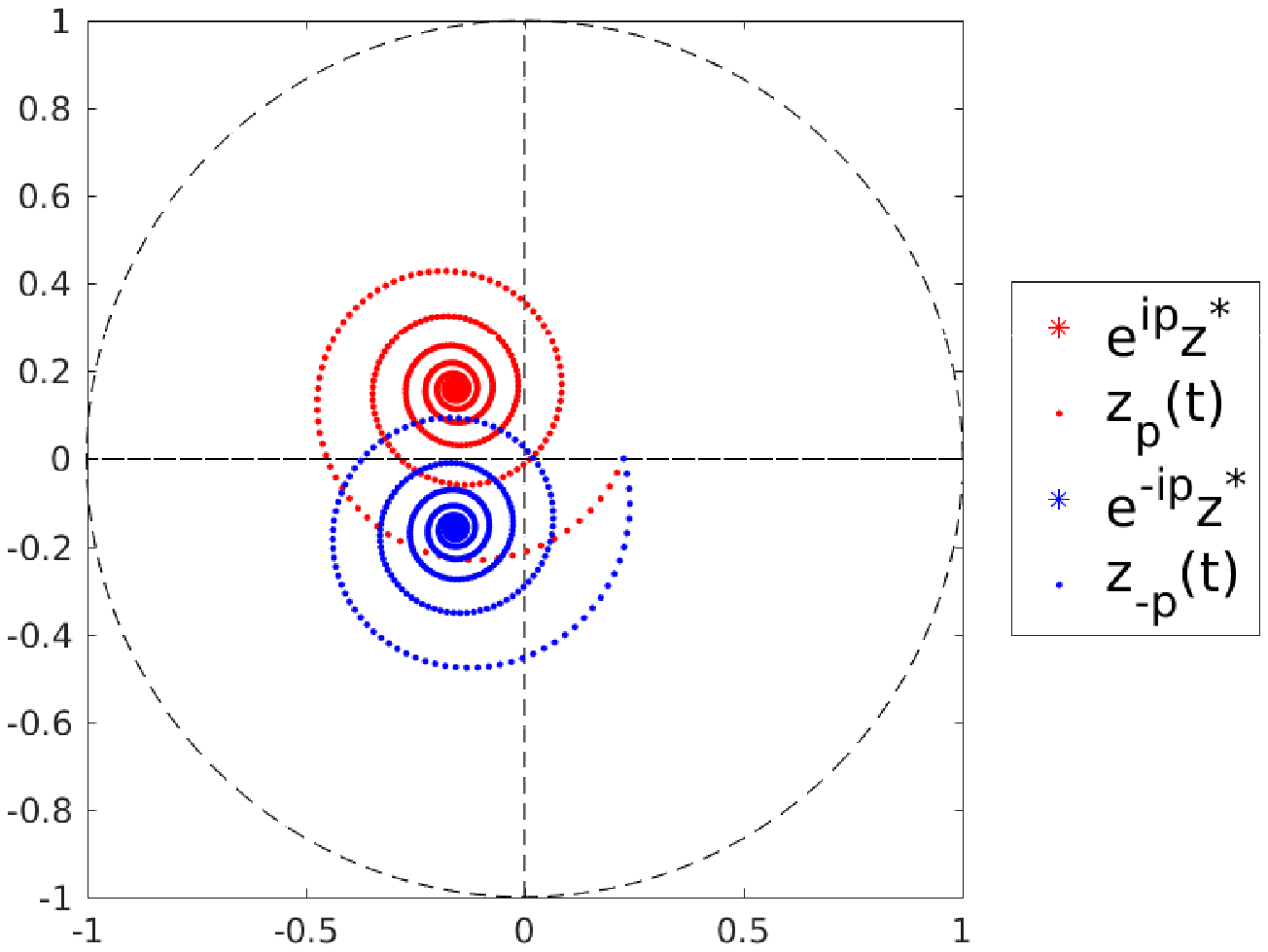}
				\caption{$\omega_0=2\pi/36$}
			\end{subfigure}
			\caption{\textit{Recovery via Ergodic Problem}: Path $z_p(t)$ while recovering from jet lag after traveling $9$ time zones east (red) and west (blue) for various values of $\omega_0$. A point is plotted every hour.}
			\label{fig:z_t_omega}
		\end{figure}

The results of changing $\sigma$ are shown in Figures \ref{fig:Changing_sigma} and \ref{fig:Changing_sigma_costs}. At first glance, it is surprising that the recovery times decrease as $\sigma$ increases, as one might think that it would be harder to recover from jet lag when there is more noise. However, the solution to the \textit{Ergodic Mean Field Game Problem} as $\sigma$ increases becomes closer and closer towards a uniform distribution, and thus $\mu^*(\phi)$ and $\mu^*(\phi-p)$ become closer to each other. In fact, when $|z^*|$ is close enough to $0$, then $|z_p(0)-e^{ip}z^*|=|z^*-e^{ip}z^*| <\epsilon^z$ and $\tau_p^z=0$. This is the case when $\sigma=0.5$ and $\sigma=1$ as shown in Figure \ref{fig:Changing_sigma}.

Since $\mu^*(\phi)$ becomes closer to a uniform distribution with larger $\sigma$, it also makes sense that the cost for synchronization with the other oscillators, $f_{osc}$, and the cost for synchronization with the natural 24 hour cycle, $f_{sun}$, both increase with $\sigma$. The reason for the increase then decrease of $f_{\alpha}$ is less clear.
		
		\begin{figure}
			\centering
			\includegraphics[scale=0.5]{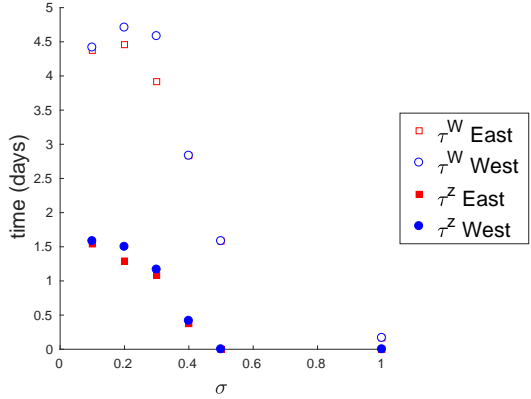}
			\caption{\textit{Recovery via Ergodic Problem}: Jet lag recovery times as a function of $\sigma$.}
			\label{fig:Changing_sigma}
		\end{figure}
		
		\begin{figure}
			\begin{subfigure}{.5\textwidth}
				\centering
				\includegraphics[scale=0.5]{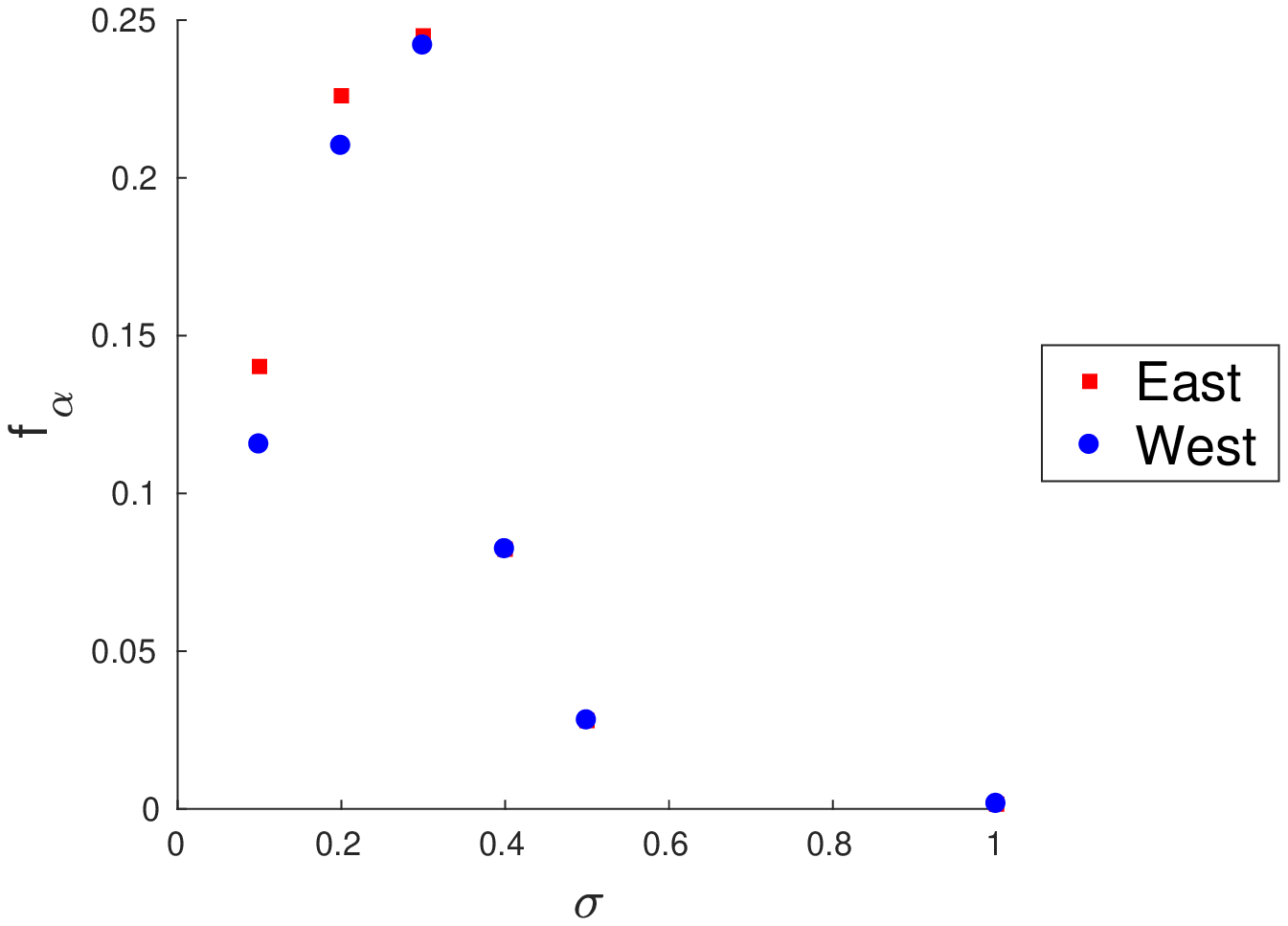}
				\caption{$f_{\alpha}(t)$}
			\end{subfigure}
			\begin{subfigure}{.5\textwidth}
				\centering
				\includegraphics[scale=0.5]{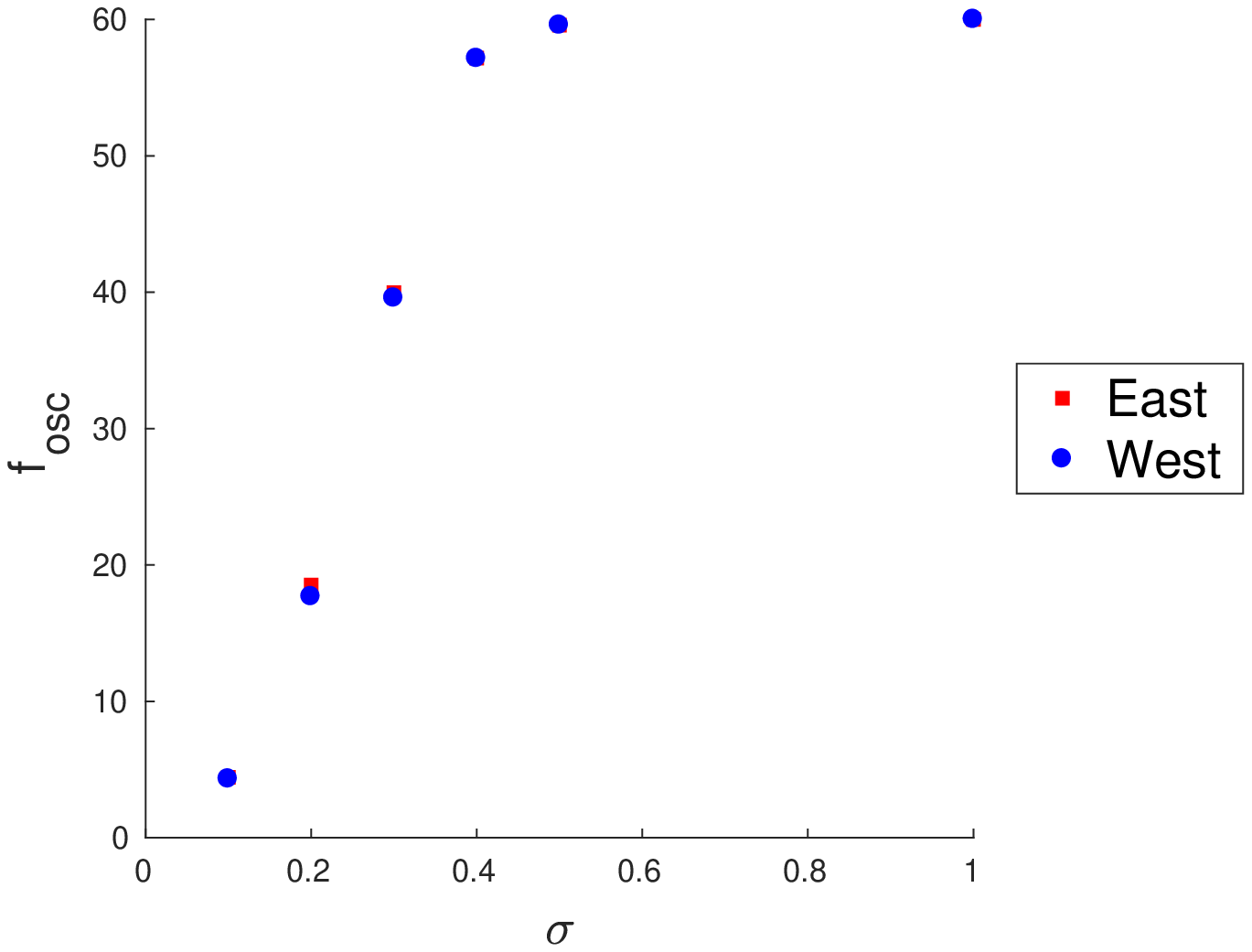}
				\caption{$f_{osc}(t)$}
			\end{subfigure}
			\\
			\begin{subfigure}{.5\textwidth}
				\centering
				\includegraphics[scale=0.5]{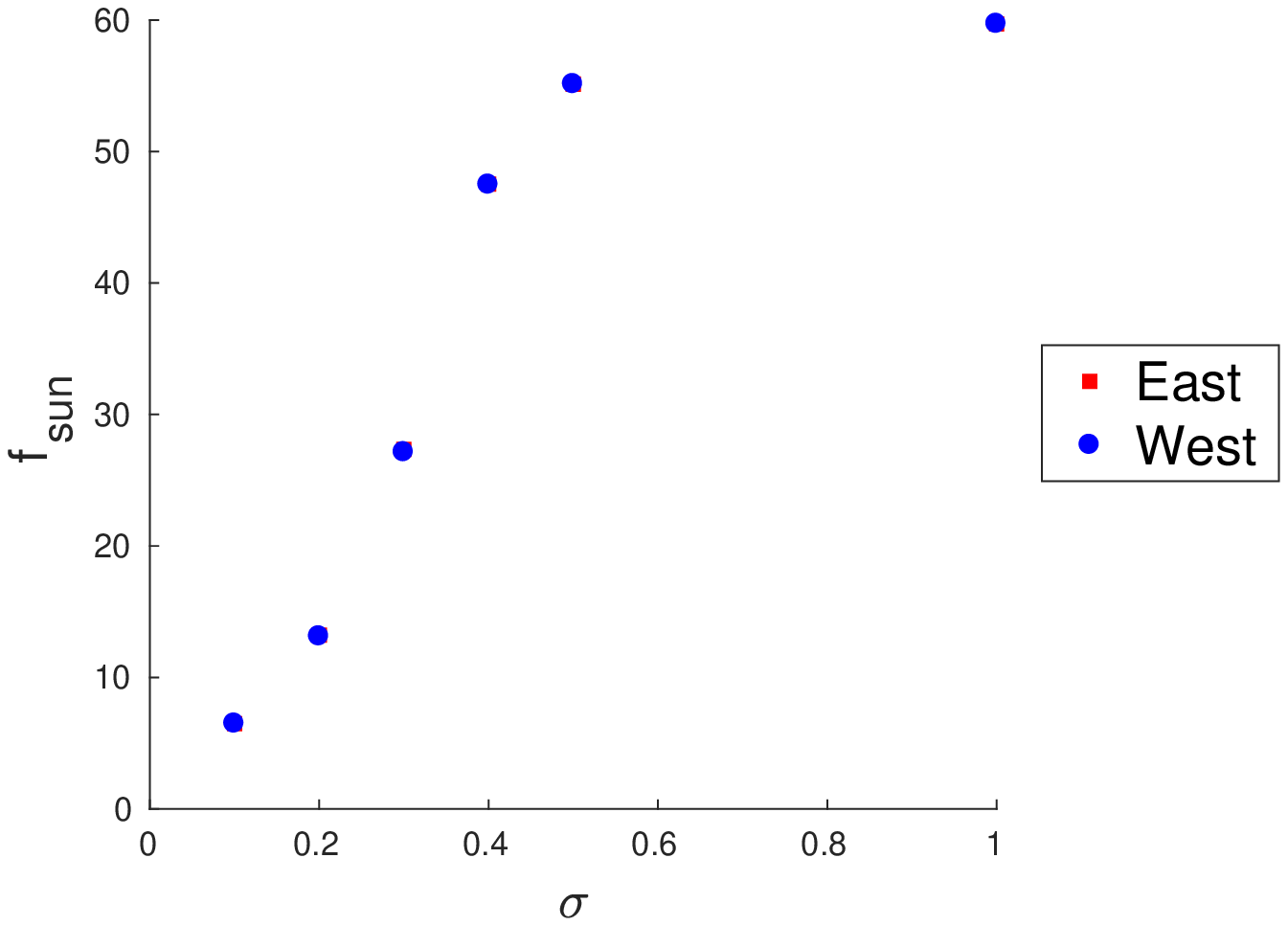}
				\caption{$f_{sun}(t)$}
			\end{subfigure}
			\begin{subfigure}{.5\textwidth}
				\centering
				\includegraphics[scale=0.5]{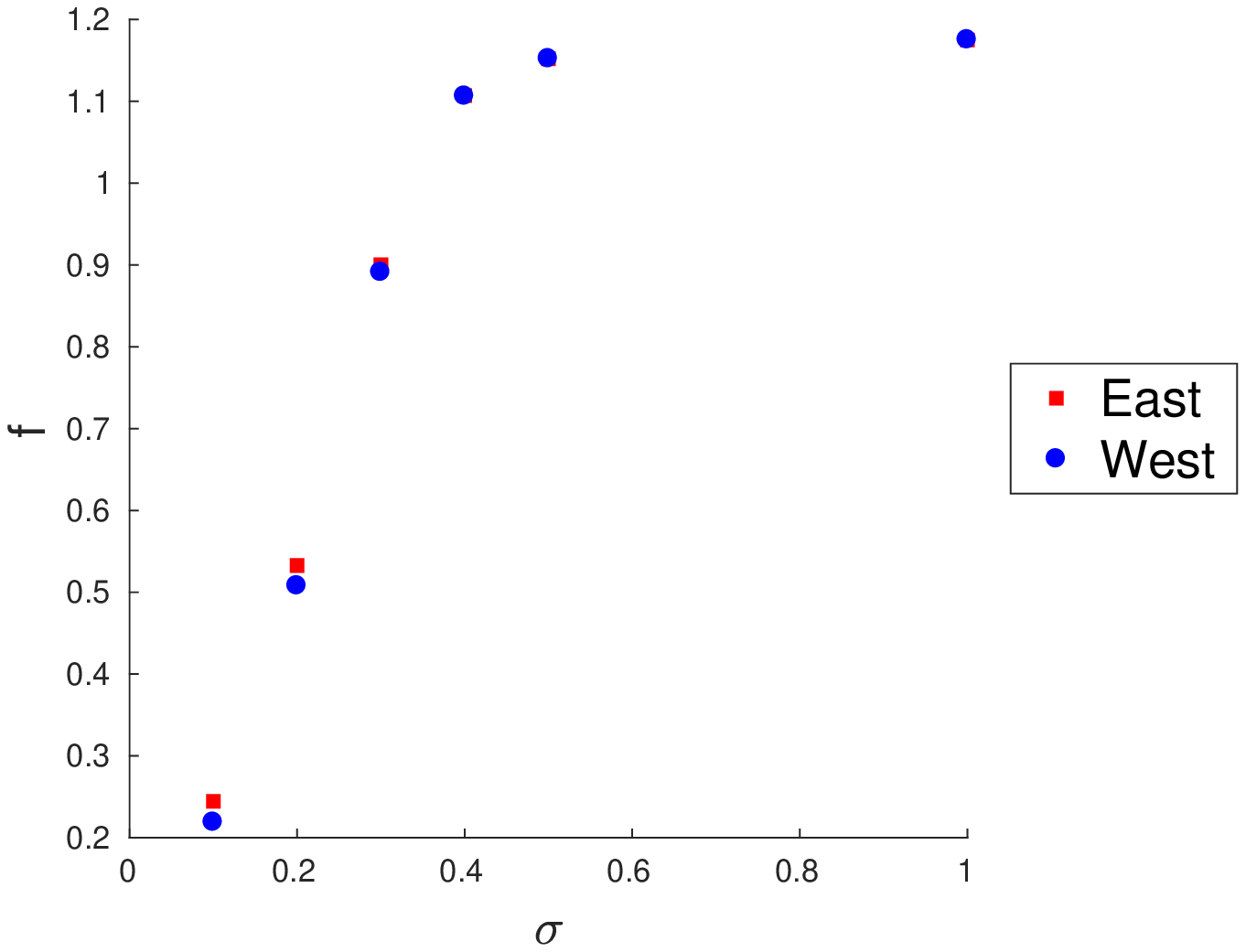}
				\caption{$f(t)$}
			\end{subfigure}
			\caption{\textit{Recovery via Ergodic Problem}: Jet lag recovery costs as a function of $\sigma$.}
			\label{fig:Changing_sigma_costs}
		\end{figure}
		
Figures \ref{fig:Changing_K} and \ref{fig:Changing_K_costs} show the results when changing $K$. The recovery times decrease with $K$. Since a larger $K$ puts more weight on synchronization of the oscillators with each other, it is unsurprising that the $f_{osc}$ and $f_{sun}$ decrease with $K$. Since more synchronization requires more effort, it is also unsurprising that $f_{\alpha}$ increases with $K$.
		
		\begin{figure}
			\centering
			\includegraphics[scale=0.5]{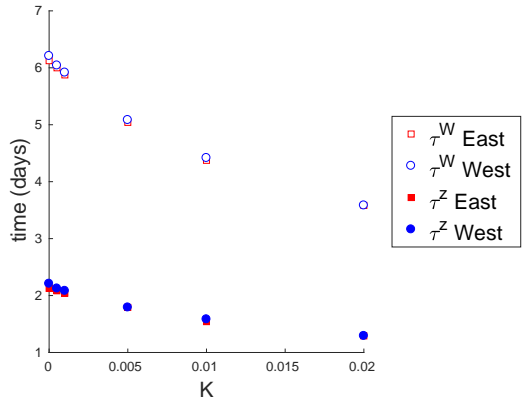}
			\caption{\textit{Recovery via Ergodic Problem}: Jet lag recovery times as a function of $K$.}
			\label{fig:Changing_K}
		\end{figure}
		
		\begin{figure}
			\begin{subfigure}{.5\textwidth}
				\centering
				\includegraphics[scale=0.5]{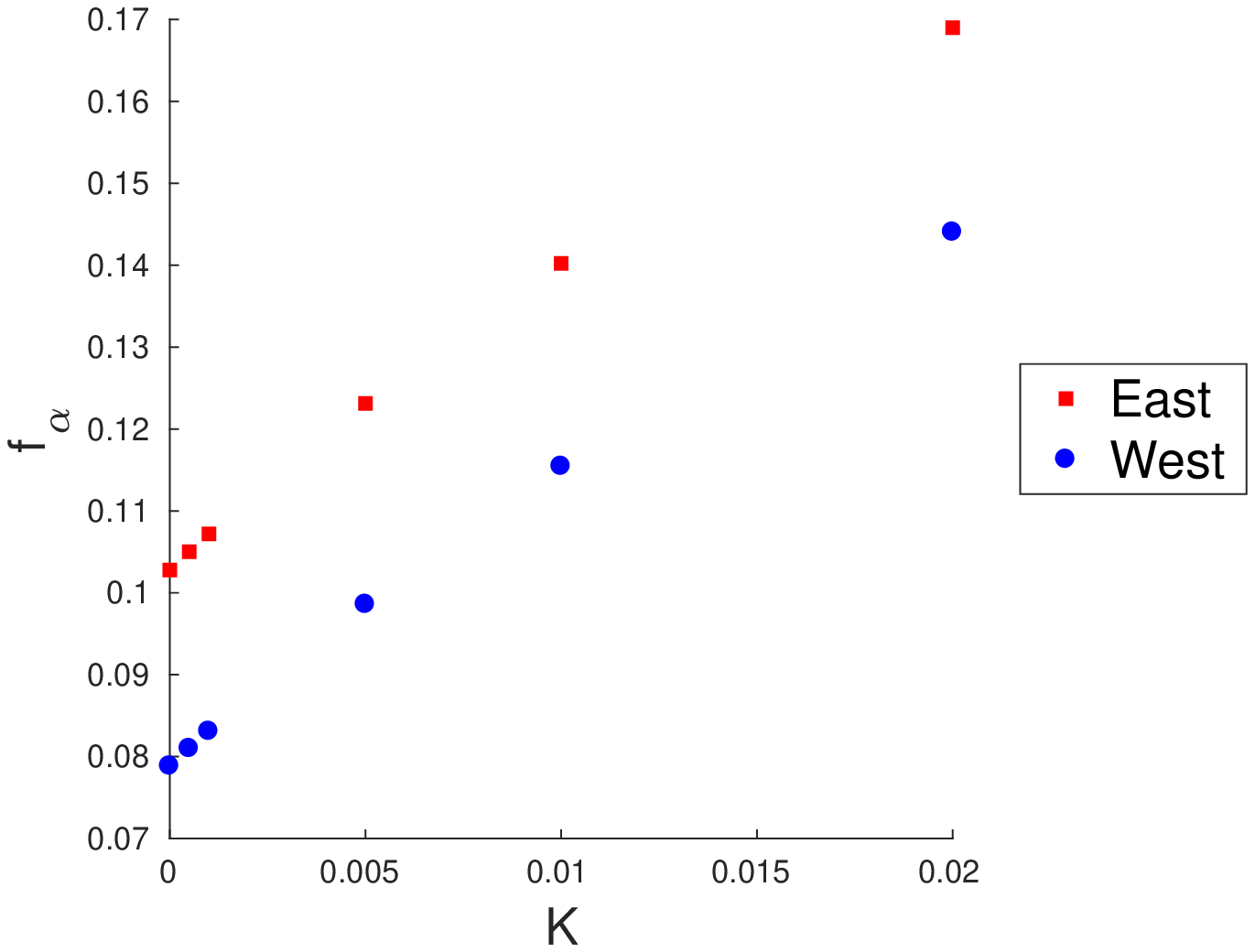}
				\caption{$f_{\alpha}(t)$}
			\end{subfigure}
			\begin{subfigure}{.5\textwidth}
				\centering
				\includegraphics[scale=0.5]{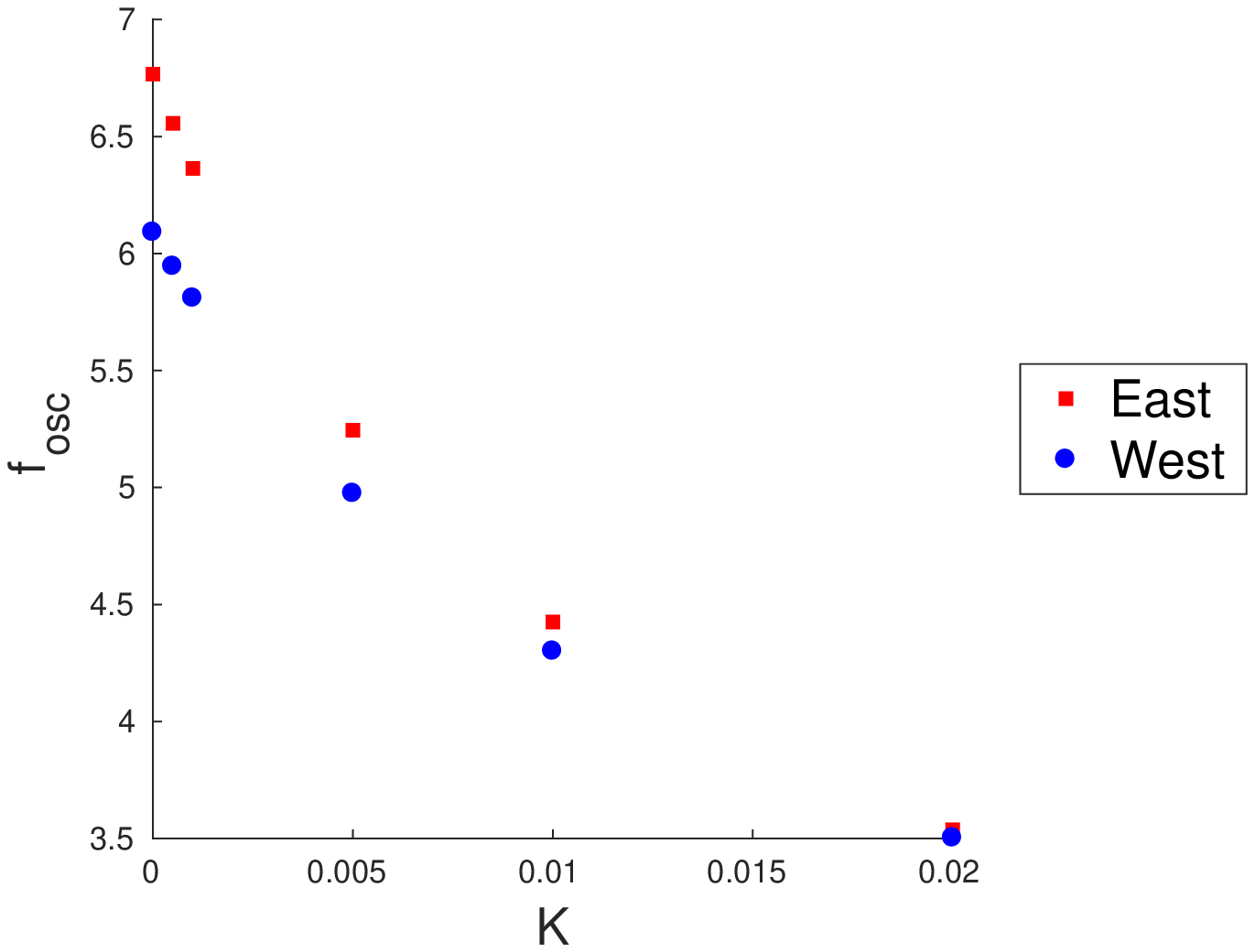}
				\caption{$f_{osc}(t)$}
			\end{subfigure}
			\\
			\begin{subfigure}{.5\textwidth}
				\centering
				\includegraphics[scale=0.5]{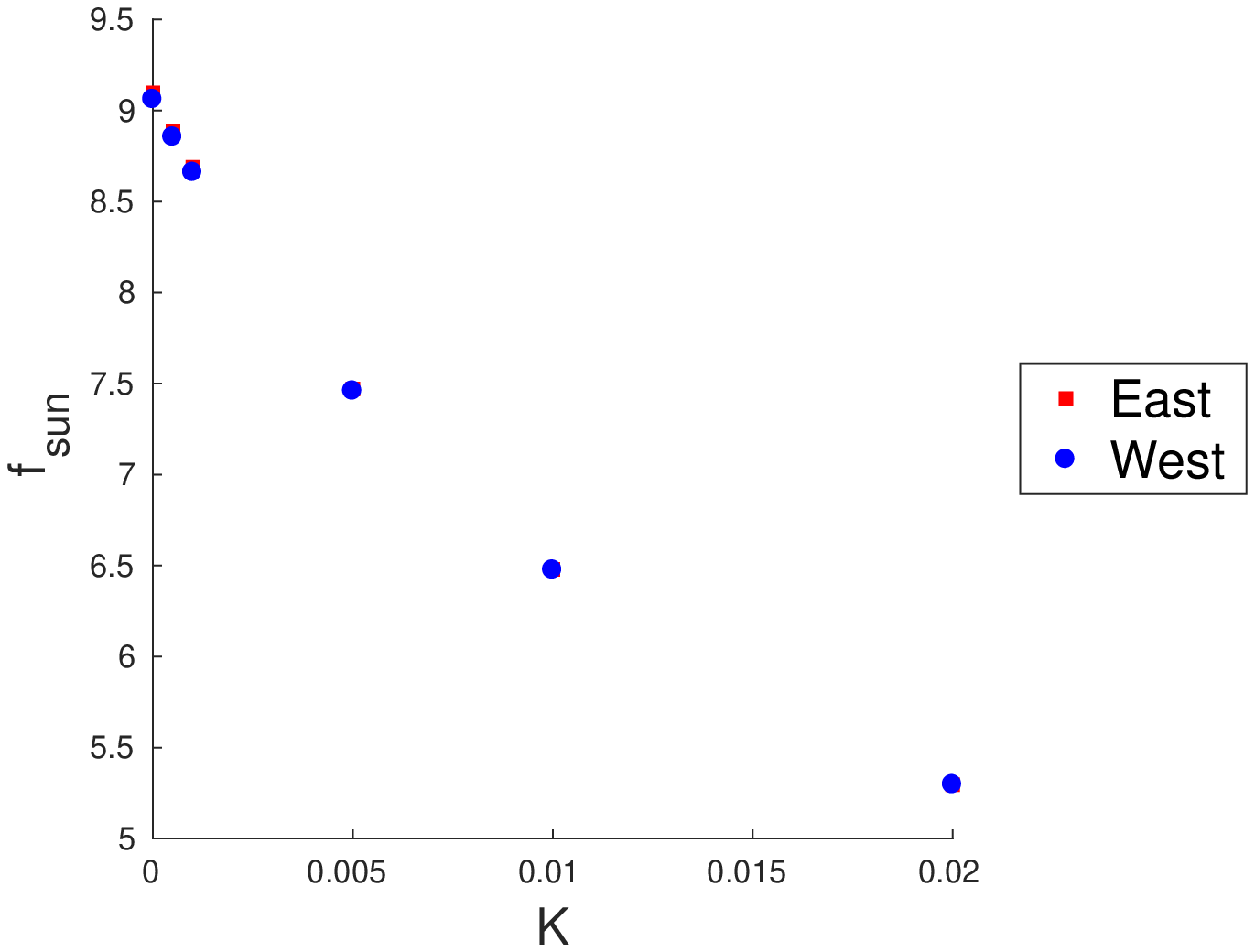}
				\caption{$f_{sun}(t)$}
			\end{subfigure}
			\begin{subfigure}{.5\textwidth}
				\centering
				\includegraphics[scale=0.5]{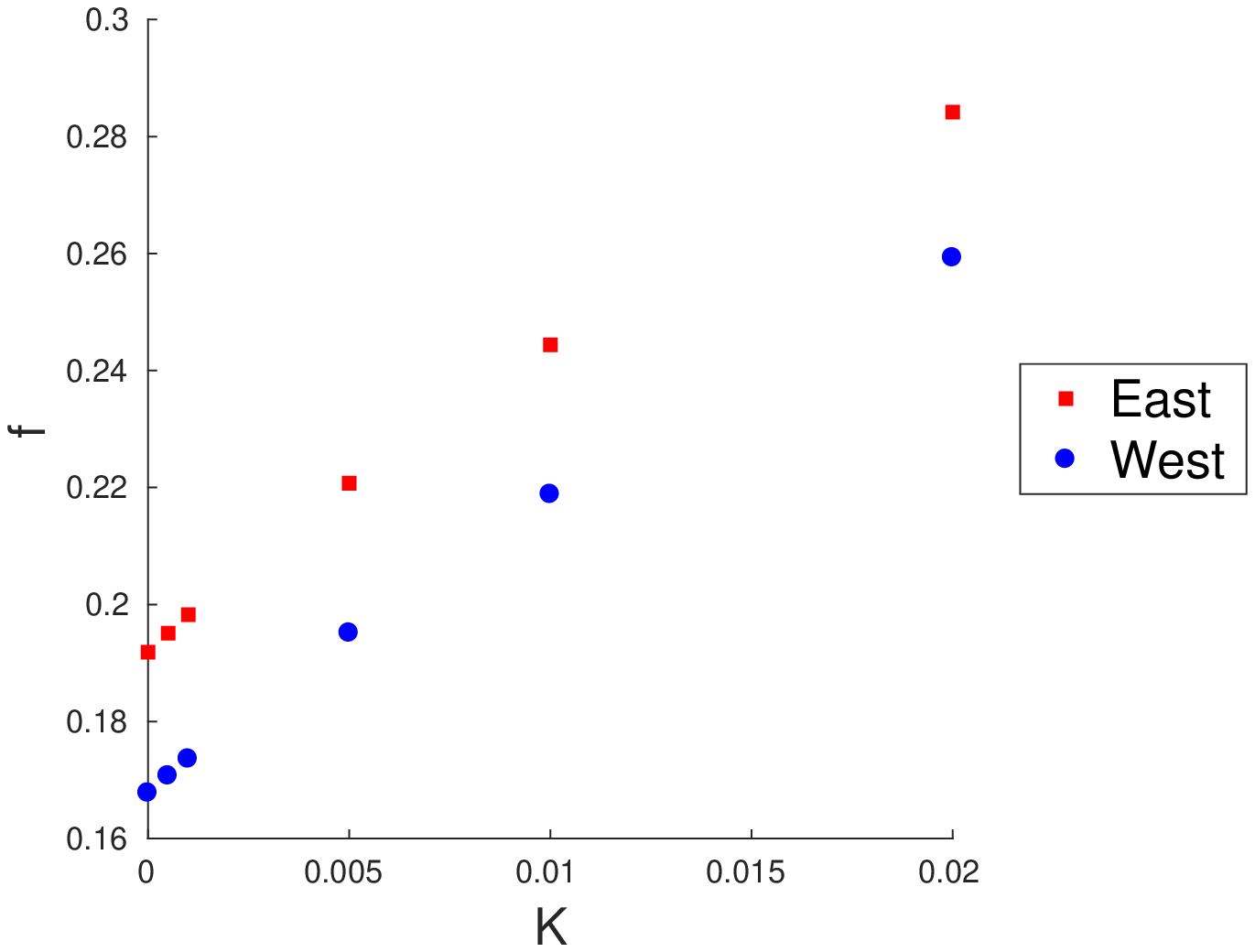}
				\caption{$f(t)$}
			\end{subfigure}
			\caption{\textit{Recovery via Ergodic Problem}: Jet lag recovery costs as a function of $K$.}
			\label{fig:Changing_K_costs}
		\end{figure}
		
The results for changing $F$ are shown in Figures \ref{fig:Changing_F} and \ref{fig:Changing_F_costs}. The recovery times decrease as $F$ increases, which is intuitive. The recovery costs are qualitatively the same as for changing $K$, with similar justifications. As we increase $F$, we put more weight on synchronization with the 24 hour light/dark cycle, which will decrease $f_{sun}$. As a result, the oscillators will be more synchronized with each other as well, which will decrease $f_{osc}$. To achieve a larger degree of synchronization will require a larger control, so $f_{\alpha}$ increases.
		
		\begin{figure}
			\centering
			\includegraphics[scale=0.5]{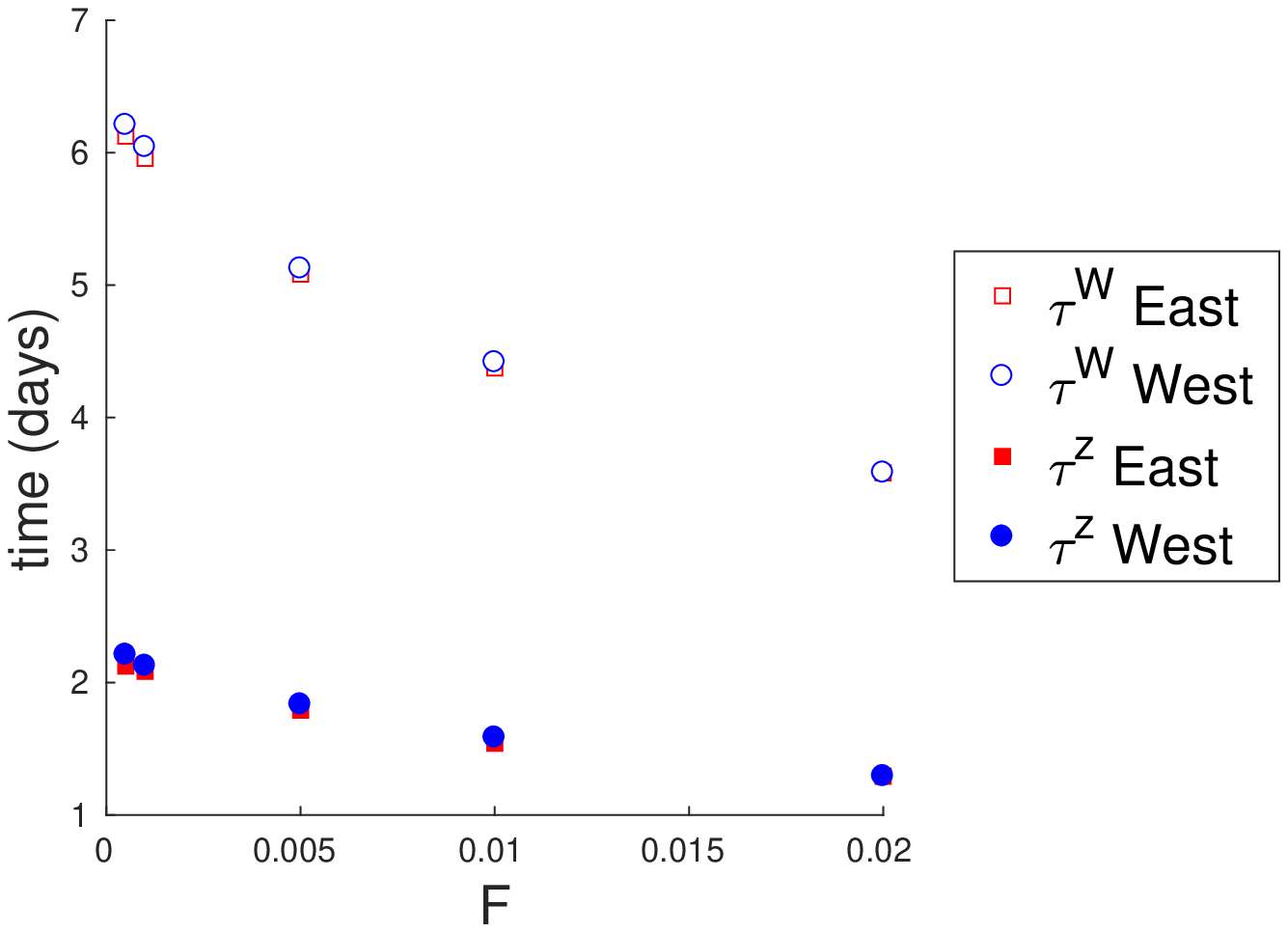}
			\caption{\textit{Recovery via Ergodic Problem}: Jet lag recovery times as a function of $F$.}
			\label{fig:Changing_F}
		\end{figure}
		
		\begin{figure}
			\begin{subfigure}{.5\textwidth}
				\centering
				\includegraphics[scale=0.5]{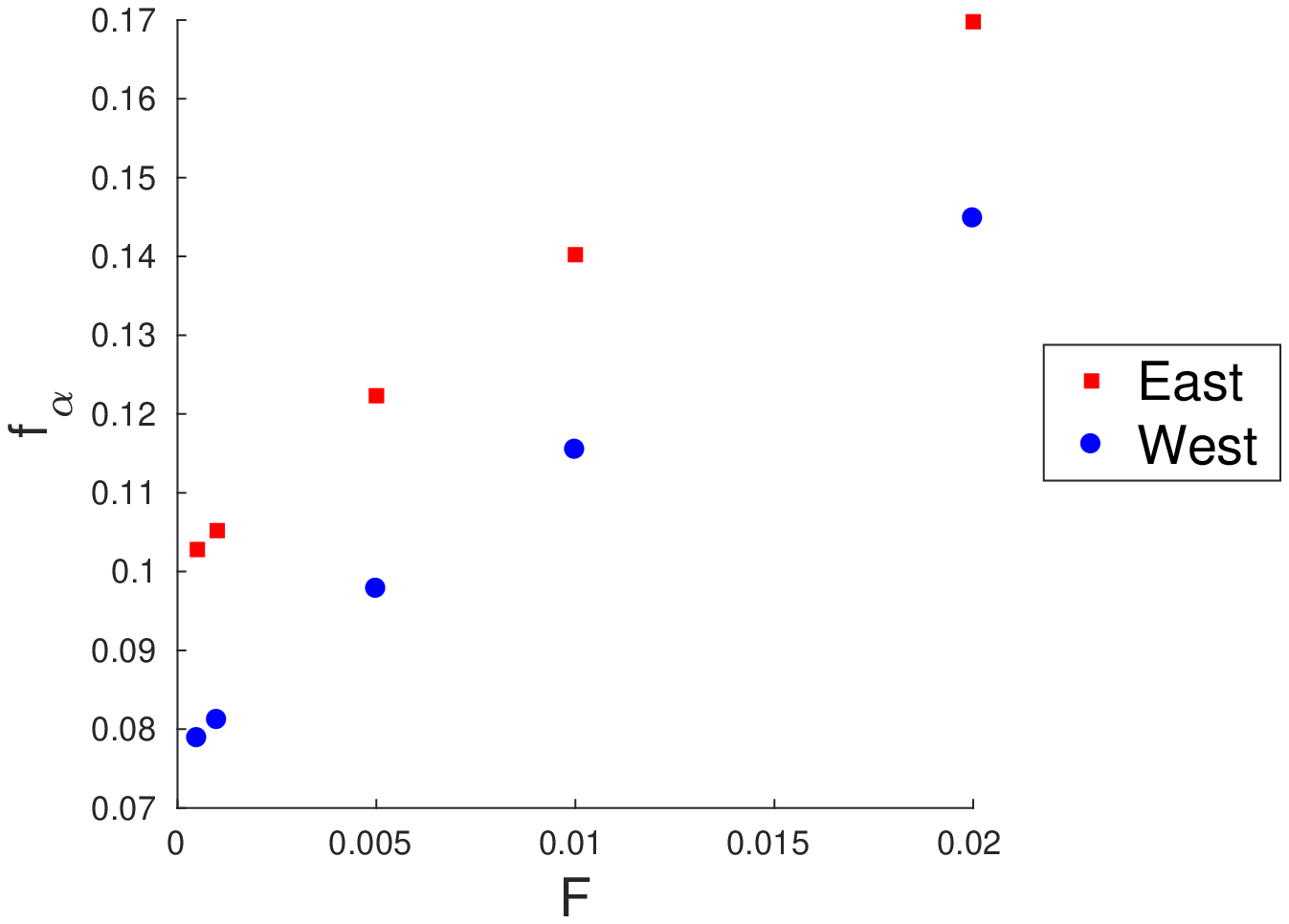}
				\caption{$f_{\alpha}(t)$}
			\end{subfigure}
			\begin{subfigure}{.5\textwidth}
				\centering
				\includegraphics[scale=0.5]{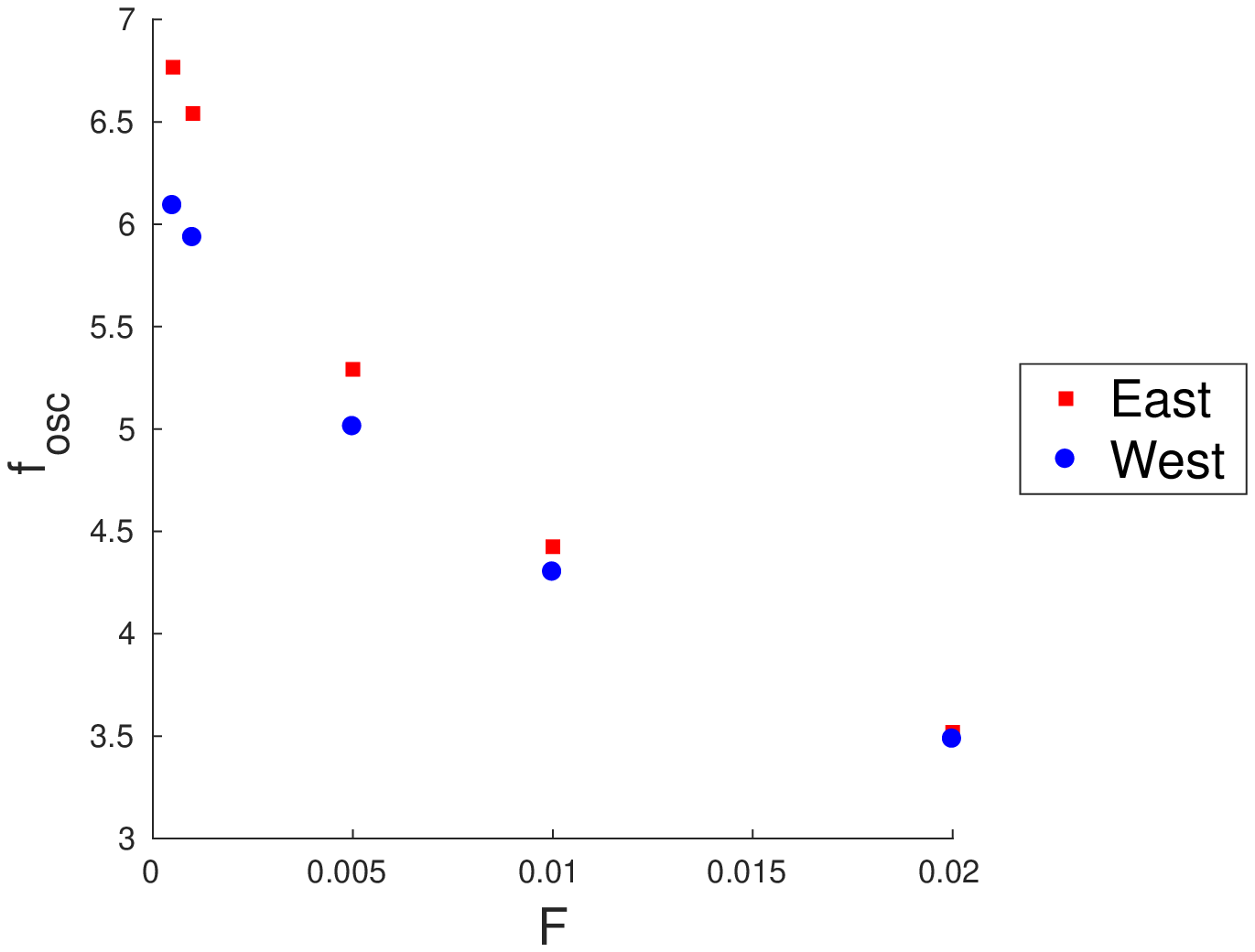}
				\caption{$f_{osc}(t)$}
			\end{subfigure}
			\\
			\begin{subfigure}{.5\textwidth}
				\centering
				\includegraphics[scale=0.5]{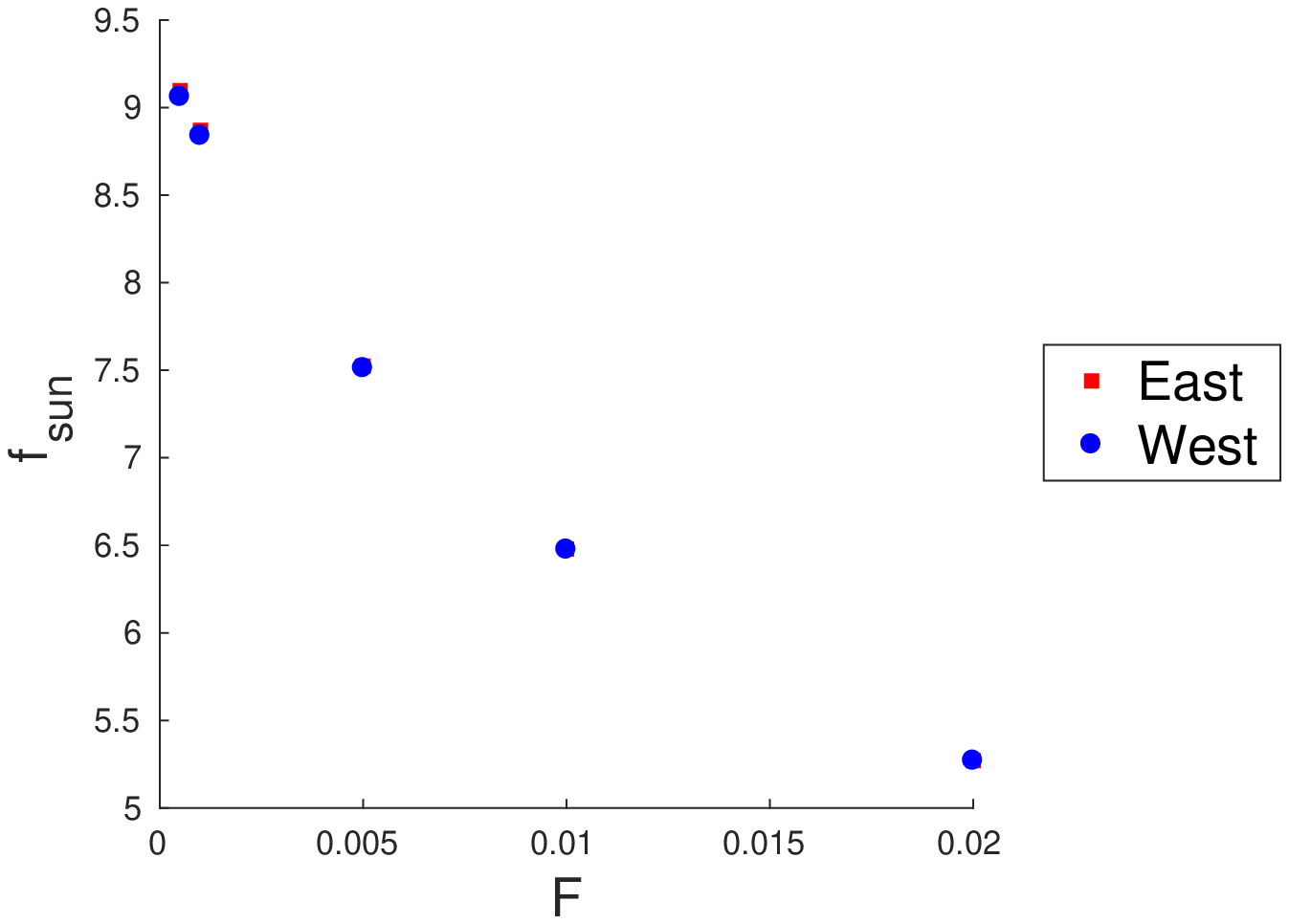}
				\caption{$f_{sun}(t)$}
			\end{subfigure}
			\begin{subfigure}{.5\textwidth}
				\centering
				\includegraphics[scale=0.5]{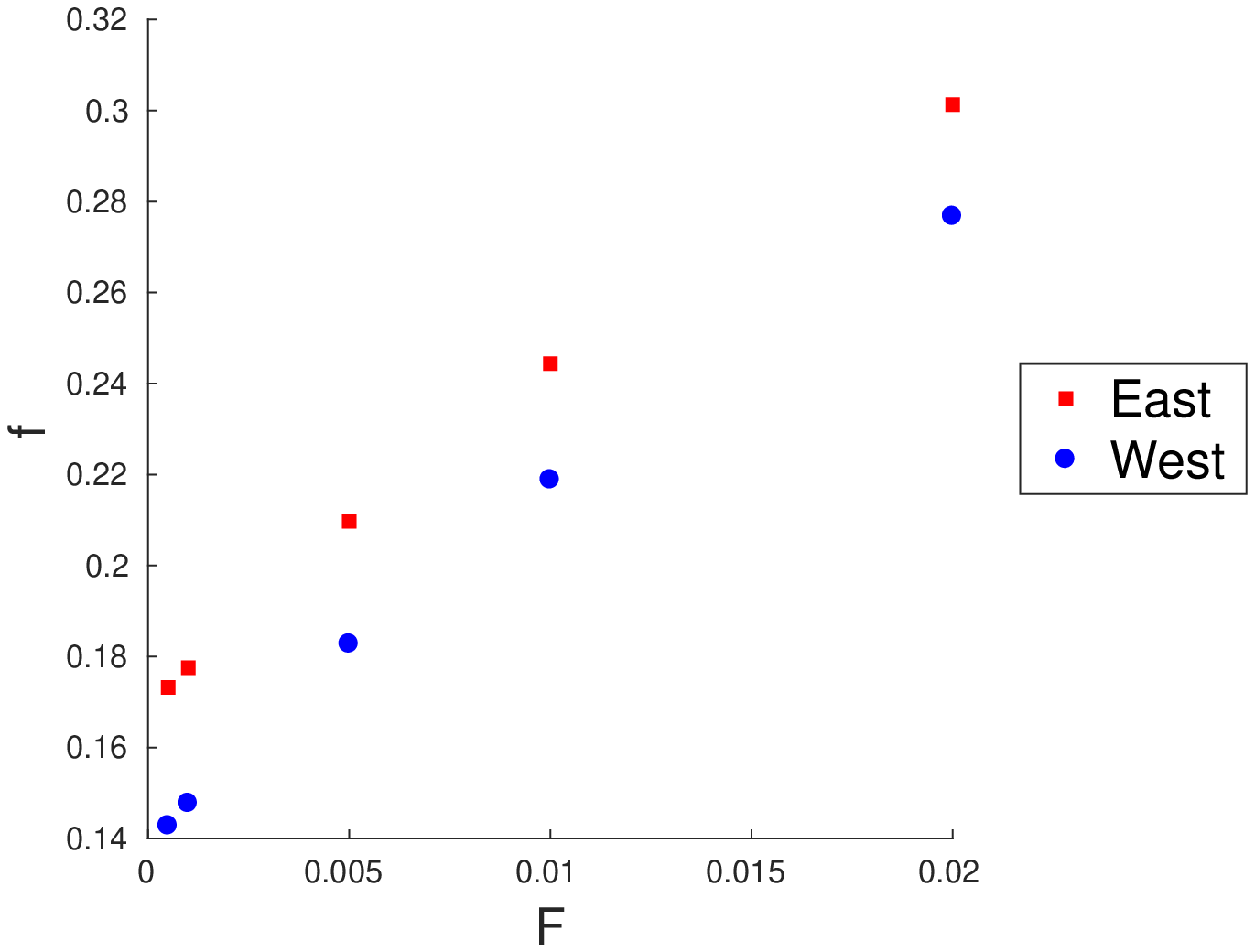}
				\caption{$f(t)$}
			\end{subfigure}
			\caption{\textit{Recovery via Ergodic Problem}: Jet lag recovery costs as a function of $F$.}
			\label{fig:Changing_F_costs}
		\end{figure}
		
In summary, Figures \ref{fig:Changing_p}, \ref{fig:Changing_omega}, \ref{fig:Changing_sigma}, \ref{fig:Changing_K}, and \ref{fig:Changing_F}  show that the recovery time is about the same for east and west travel. Figures \ref{fig:Changing_p_costs}, \ref{fig:Changing_omega_costs}, \ref{fig:Changing_sigma_costs}, \ref{fig:Changing_K_costs}, and \ref{fig:Changing_F_costs} show that the recovery costs are larger for east than west travel when $\omega_0<\omega_S$. The recovery times and recovery costs both increase as $|p|$ or $|\omega_0-\omega_S|$ increase. On the other hand, the recovery times decrease at the expense of increasing recovery costs as $\sigma$, $K$, or $F$ increase.

\subsection{Recovery Mean Field Game Problem Numerical Solution and Parameter Sensitivity}	
\hspace{5mm} Now we present the numerical results for the \textit{Recovery Mean Field Game Problem}. We set the finite time horizon to the reference value of $T=100$ days. For the reference set of the parameters, we find that the recovery times are $\tilde{\tau}^W_{9\omega_S}=\tilde{\tau}^W_{-9\omega_S}=6.17$ days and $\tilde{\tau}^z_{9\omega_S}=\tilde{\tau}^z_{-9\omega_S}=2.17$ days, where $\epsilon^W=0.01$ and $\epsilon^z=0.2$, as before. Thus, the recovery times are the same for east and west travels. Note that $\tilde{\tau}^W_{\pm 9\omega_S}>\tau^W_{\pm9\omega_S}$ and $\tilde{\tau}^z_{\pm 9\omega_S}>\tau^z_{\pm9\omega_S}$. Thus, the recovery times are larger when the oscillators solve a new mean field game problem for the recovery time period, instead of adapting the optimal control from the \textit{Ergodic Mean Field Game Problem}, as in the \textit{Recovery via Ergodic Problem}.

As in the the previous model, we find that the total recovery cost is larger for eastward travels. When comparing the recovery cost for the two models of jet lag recovery, the mean field game model has a smaller recovery cost. To summarize, for both models, the recovery times are about the same for eastward and westward travel with a larger recovery cost for eastward travel. When the oscillators re-optimize over a finite time horizon, it takes longer for them to recover from jet lag, but they accrue a smaller recovery cost in the process.

The analogous plots of Figures \ref{fig:Changing_p}-\ref{fig:Changing_omega_costs} and Figures \ref{fig:Changing_sigma}-\ref{fig:Changing_F_costs} are quite similar. One may wonder how much these results depend on the choice of the finite time horizon $T=100$ days. For the reference set of parameters, we verify that the recovery times and costs are the same for $T=50$, $100$, $150$, or $200$ days.

The main deviation between the two recovery models is what happens when $|\omega_0-\omega_S|$ is large. For the previous model, we notice a large jump in the recovery time for $\omega_0=2\pi/36$, for example. For the \textit{Recovery Mean Field Game Problem}, however, we find that the oscillators never recover. It is also interesting to note that the numerical scheme converges slowly in the large $|\omega_0-\omega_S|$ regime, requiring many more iterations to reach convergence than the other numerical results.

		\begin{figure}
			\begin{subfigure}{.5\textwidth}
				\centering
				\includegraphics[scale=0.5]{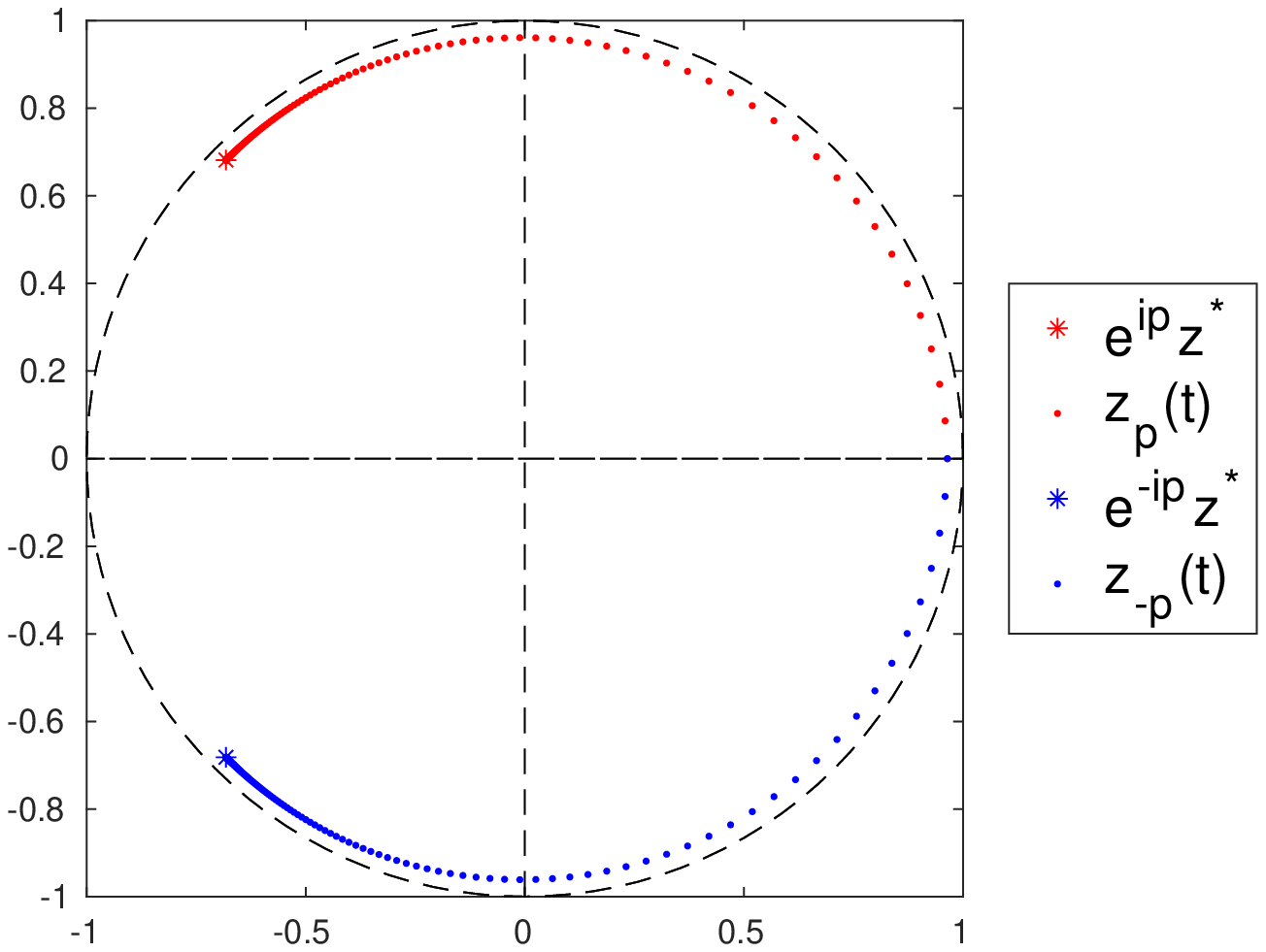}
				\caption{$\omega_0=2\pi/24.5$}
			\end{subfigure}
			\begin{subfigure}{.5\textwidth}
				\centering
				\includegraphics[scale=0.5]{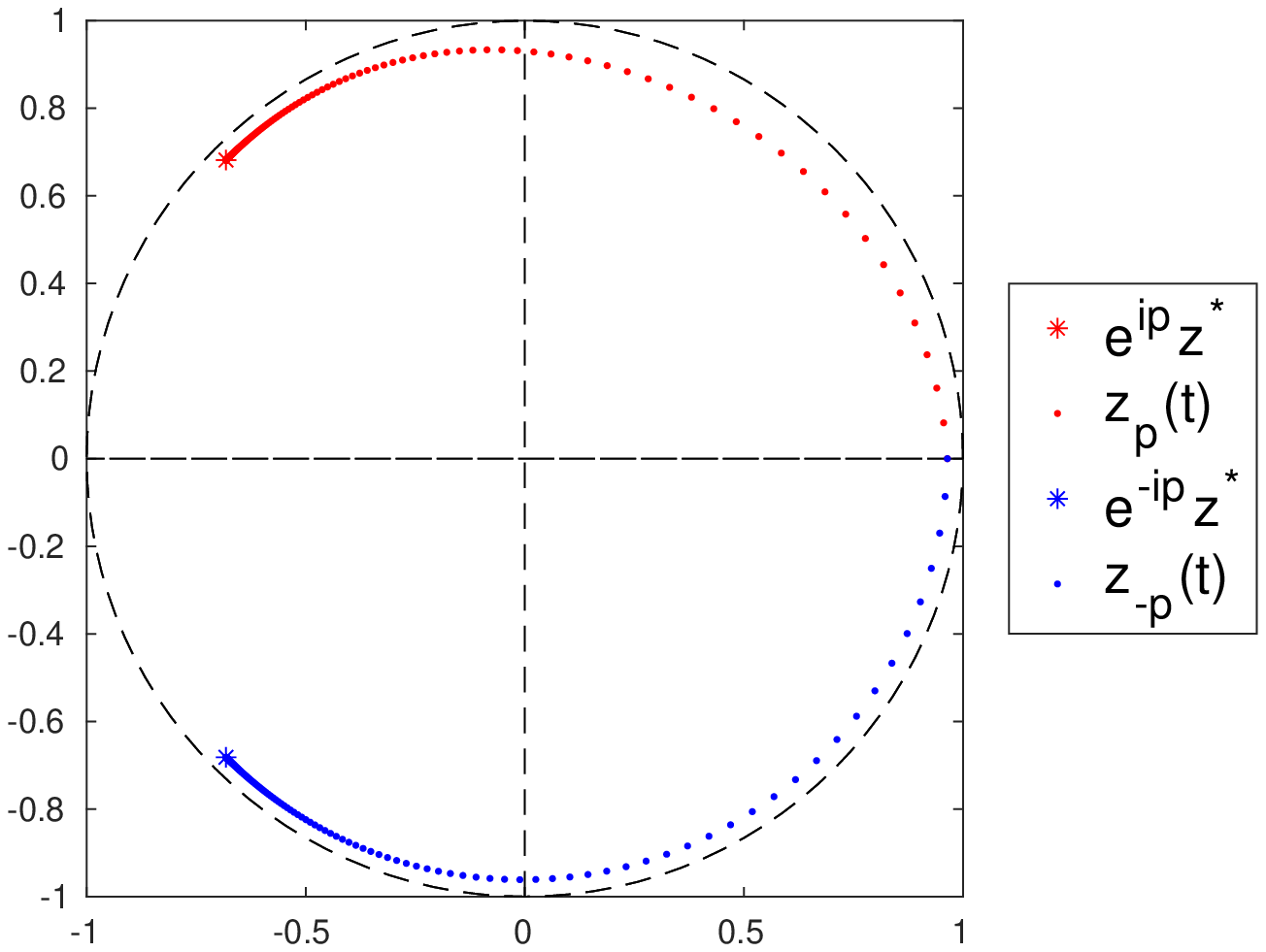}
				\caption{$\omega_0=2\pi/26.4$}
			\end{subfigure}
			\\
			\begin{subfigure}{.5\textwidth}
				\centering
				\includegraphics[scale=0.5]{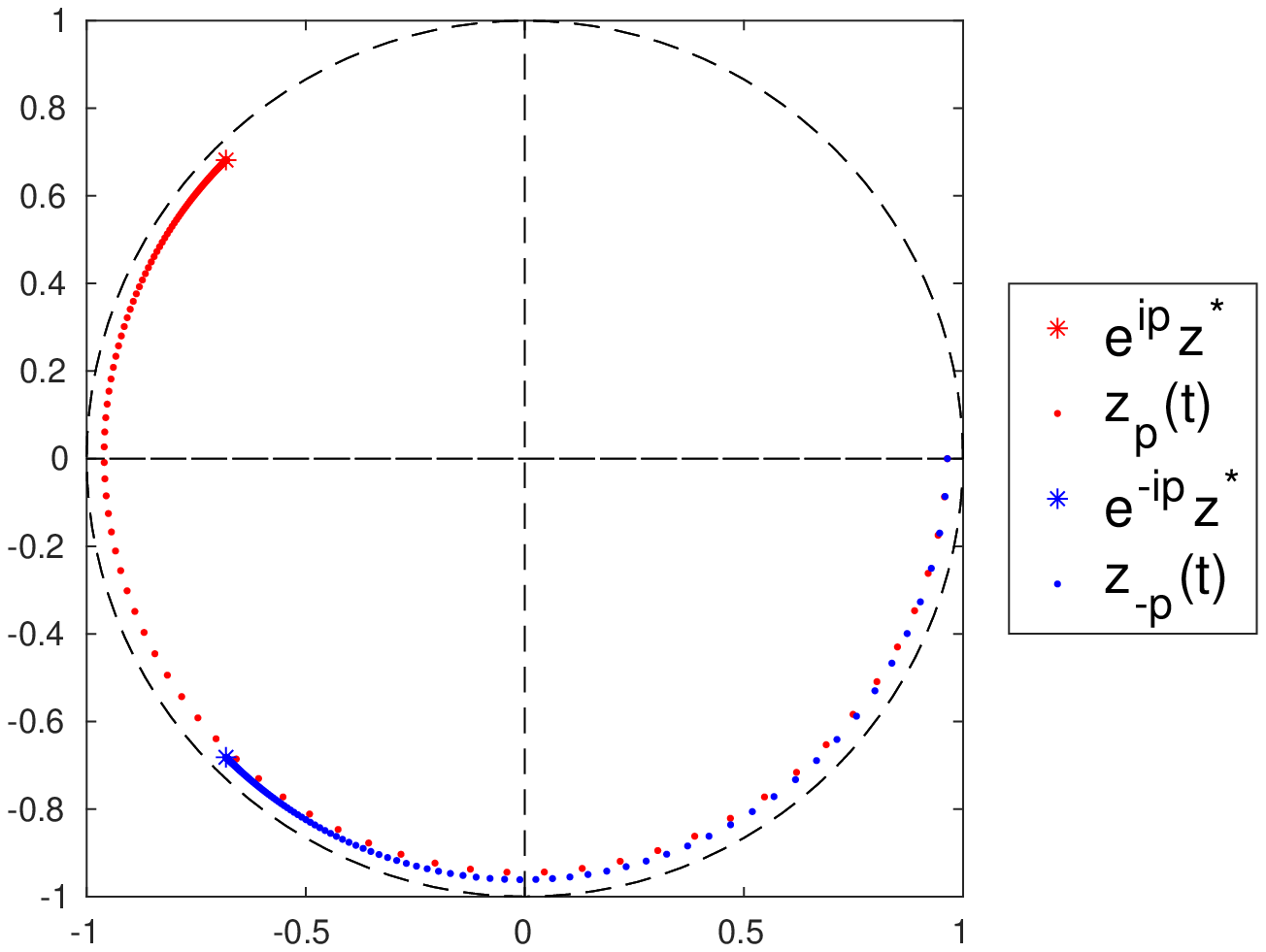}
				\caption{$\omega_0=2\pi/30$}
			\end{subfigure}
			\begin{subfigure}{.5\textwidth}
				\centering
				\includegraphics[scale=0.5]{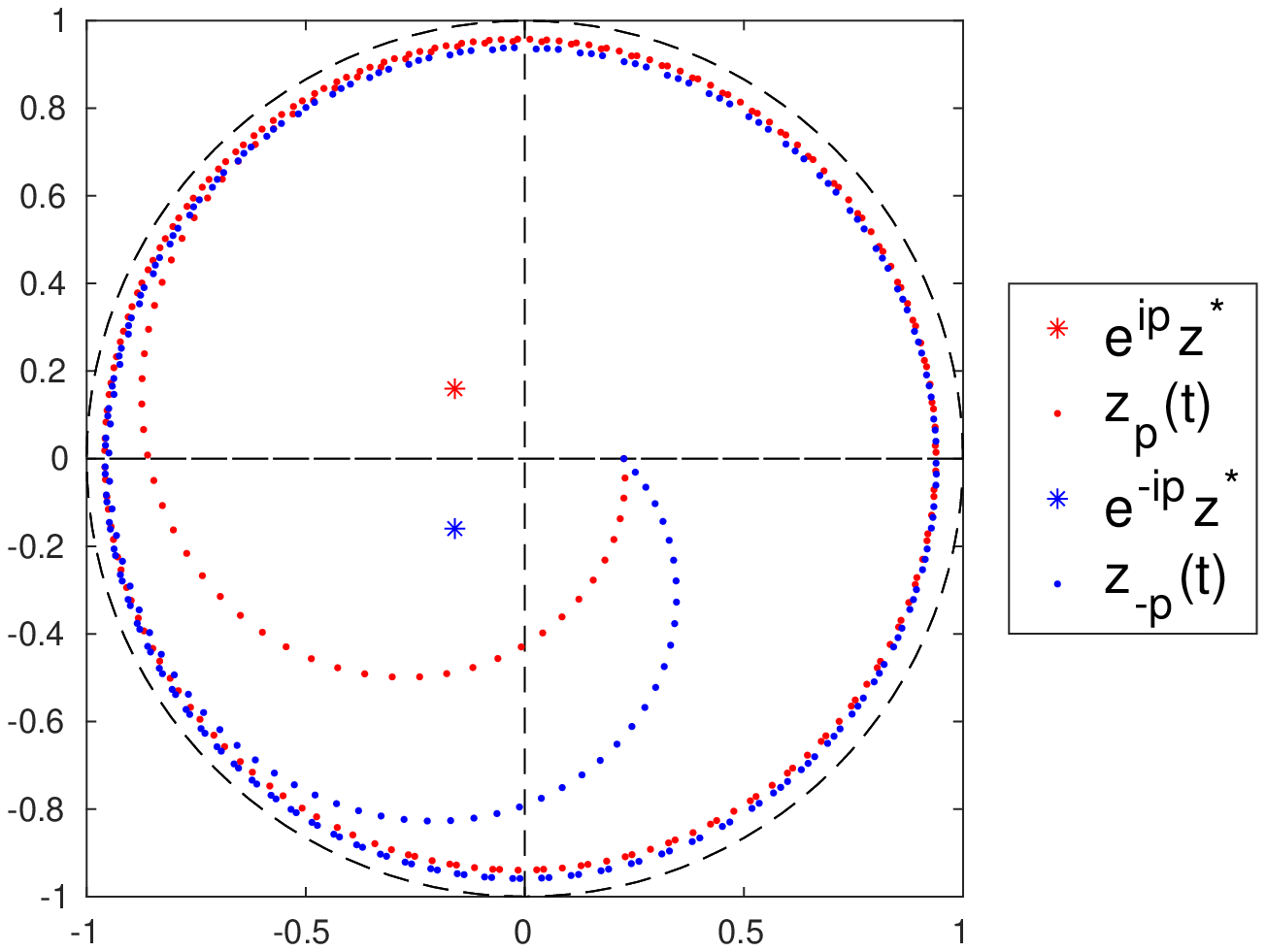}
				\caption{$\omega_0=2\pi/36$}
			\end{subfigure}
			\caption{\textit{Recovery Mean Field Game Problem}: Path $\tilde{z}_p(t)$ while recovering from jet lag after traveling $9$ time zones east (red) and west (blue) for various values of $\omega_0$. A point is plotted every hour. Note that for $\omega_0=2\pi/36$, the oscillators never recover.}
			\label{fig:z_t_tilde_omega}
		\end{figure}
Analogous plots of Figure \ref{fig:z_t_omega} are shown in Figure \ref{fig:z_t_tilde_omega} for the \textit{Recovery Mean Field Game Problem}. Note that in Figure \ref{fig:z_t_tilde_omega}(d), $|\tilde{z}_p(t)|$ increases, meaning that the oscillators are becoming more synchronized with each other. But since the angle of $\tilde{z}_p(t)$ continues to rotate, the oscillators are not able to synchronize with the natural 24 hour cycle. This result leads us to conjecture that if we keep $\omega_0=2\pi/36$ and decrease $K$ (which weighs synchronization of the oscillators with each other) and/or increase $F$ (which weighs synchronization with the natural 24 hour cycle), then the oscillators will be able to recover. We tested this conjecture by decreasing $K$ to $0$ and increasing $F$ to $0.05$. Results are shown in Figure \ref{fig:z_tilde_mixed}. For both east and west trips and both adjustments to the parameters, the oscillators are able to recover from jet lag, which confirms the conjecture.

		\begin{figure}
			\begin{subfigure}{.5\textwidth}
				\centering
				\includegraphics[scale=0.5]{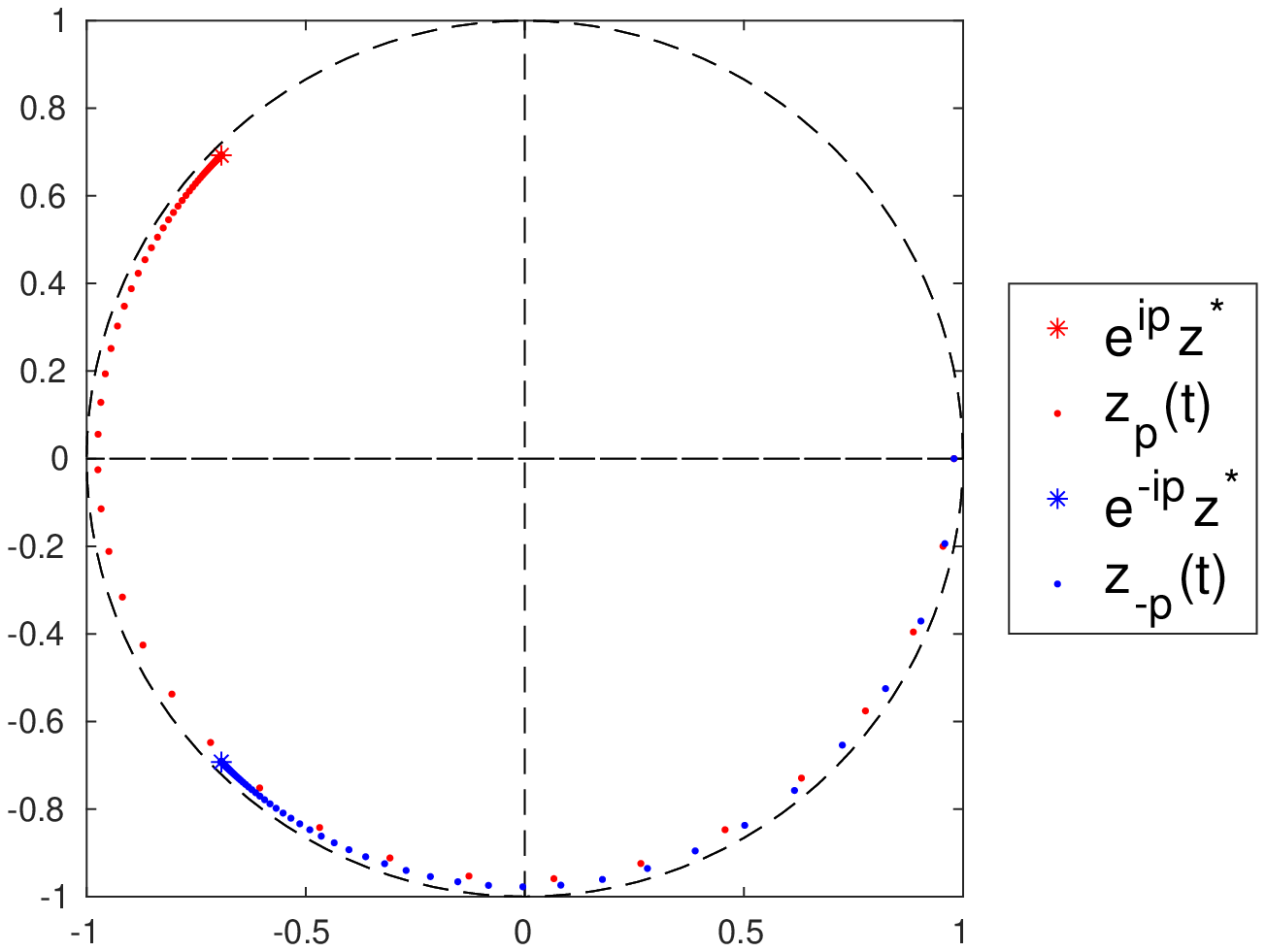}
				\caption{$\omega_0=2\pi/36$, $F=0.05$}
			\end{subfigure}
			\begin{subfigure}{.5\textwidth}
				\centering
				\includegraphics[scale=0.5]{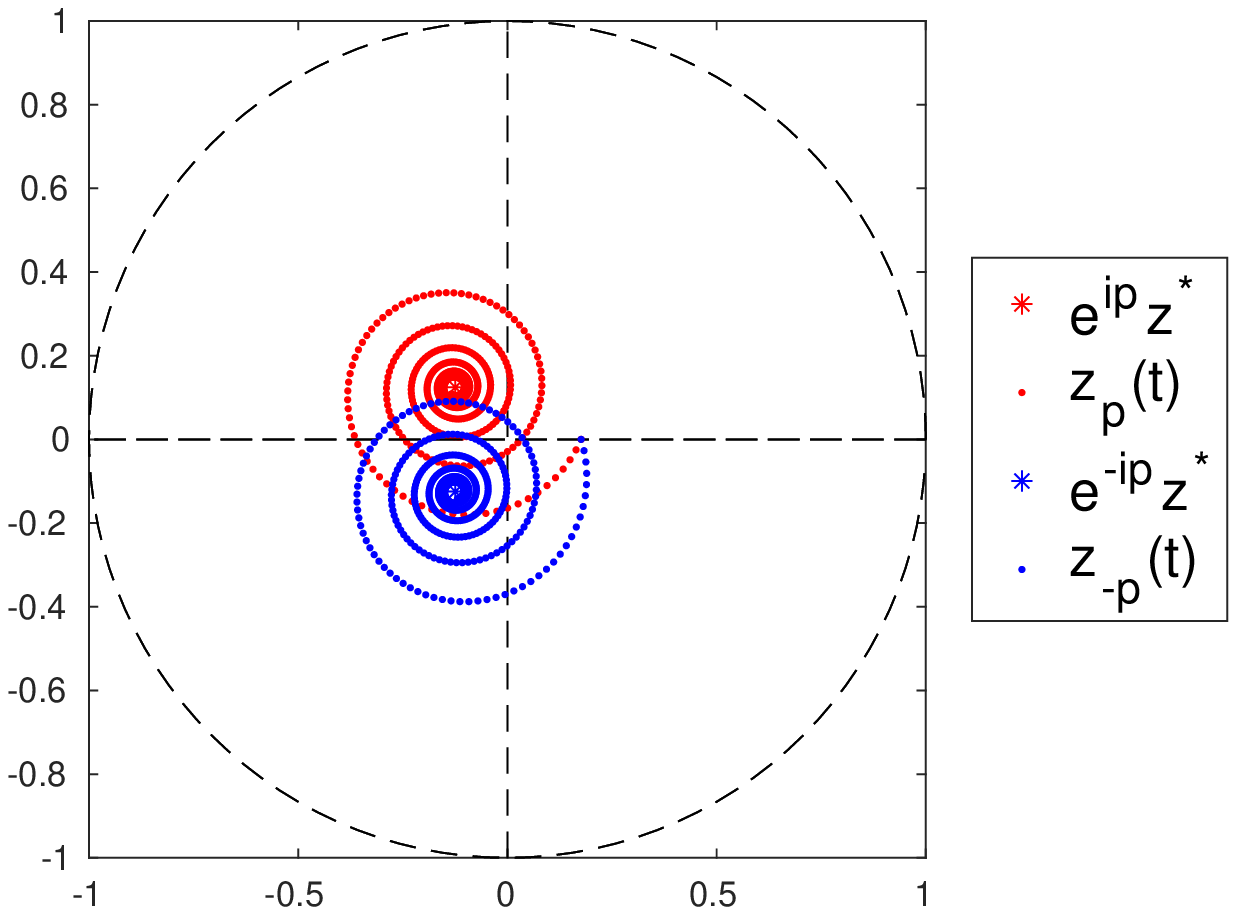}
				\caption{$\omega_0=2\pi/36$, $K=0$}
			\end{subfigure}		
			\caption{\textit{Recovery Mean Field Game Problem}: Path $\tilde{z}_p(t)$ while recovering from jet lag after traveling $9$ time zones when $\omega_0=2\pi/36$. These results confirm our conjecture that increasing $F$ and/or decreasing $K$ will allow the oscillators to recover.}
			\label{fig:z_tilde_mixed}
		\end{figure}
		
Next, we consider if this model formulation for jet lag recovery is consistent with our model formulation for the long time behavior of oscillators within a time zone, as given by the \textit{Ergodic Mean Field Game Problem}. In particular, we should expect that an individual who is entrained to their time zone but decides to solve one of the two recovery problems with $p=0$ (i.e. they do not travel to a different time zone), then they should remain synchronized to their time zone (i.e. $\tilde{\mu}_0(t,\phi)=\mu_0(t,\phi)=\mu^*(\phi),\ \forall t$). Clearly $\mu_0(t,\phi)=\mu^*(\phi),\ \forall t$ for the \textit{Recovery via Ergodic Problem}, since they will continue to use the optimal control $\alpha^*(\phi)$ which will keep them synchronized. For the reference set of parameters, we find that the oscillators also remain synchronized when using the \textit{Recovery Mean Field Game Problem}. For large values of $|\omega_0-\omega_S|$, however, the oscillators become unsynchronized when solving the \textit{Recovery Mean Field Game Problem} with $p=0$. In fact, for $\omega_0=2\pi/36$ we find that $\mathcal{W}_2(\tilde{\mu}_0(t,\phi), \mu^*(\phi))>\epsilon^W$ for all $t \in [T/4,3T/4]$. Figure \ref{fig:z_tilde_p_0} shows the path of $\tilde{z}_0(t)$ when $\omega_0=2\pi/36$. This is seemingly an inconsistency in the modeling of the \textit{Recovery Mean Field Game Problem} when $|\omega_0-\omega_S|$ is large. However, as mentioned earlier, $|\omega_0-\omega_S|$ is small for the application at hand of jet lag recovery in humans, so the model is still feasible for the parameters of interest.
		\begin{figure}
			\centering
			\includegraphics[scale=0.5]{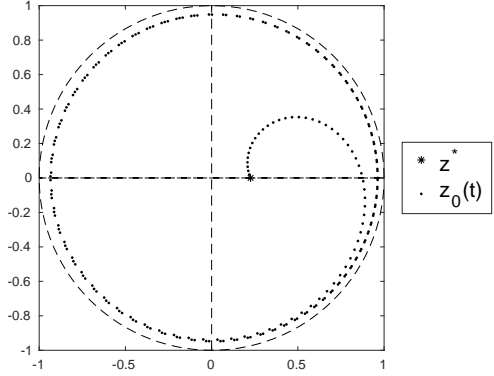}
			\caption{Path $\tilde{z}_0(t)$ after solving the \textit{Recovery Mean Field Game Problem} with $p=0$ when $\omega_0=2\pi/36$. The oscillators become unsynchronized, despite staying in the same time zone.}
			\label{fig:z_tilde_p_0}
		\end{figure}

Finally, one many wonder if the choice of the finite time horizon, $T=100$ days, affects whether the oscillators become unsynchronized when $p=0$ and $|\omega_0-\omega_S|$ is large. Surprisingly, we still find that $\mathcal{W}_2(\tilde{\mu}_0(t,\phi), \mu^*(\phi))>\epsilon^W$ for all $t \in [T/4,3T/4]$ for $T=50$, $T=100$, $T=150$, and $T=200$ days when $p=0$ and $\omega_0=2\pi/36$. This is surprising because one might expect that as $T \rightarrow \infty$, we should recover the solution to the \textit{Ergodic Mean Field Game Problem} in some sense. This is the focus of the papers of Cardaliaguet, Lasry, Lions, and Porretta \cite{cardaliaguet2012long}\cite{cardaliaguet2013long}. Note that our problem does not fit into the framework in \cite{cardaliaguet2013long} for two reasons: 1) we have an extra constant ($\omega_0-\omega_S$) in the drift, and 2) the running cost does not satisfy the monotonicity assumption. The monotonicity assumption is crucial in the proof of Theorem 3.1 in \cite{cardaliaguet2013long} and the numerical results violate the statement of Theorem 3.1 in the regime of large $|\omega_0-\omega_S|$.

\section{Conjectures}\label{sec:Conjectures}
\hspace{5mm} From the numerical results presented in the last section, we pose the following conjecture for the \textit{Ergodic Mean Field Game Problem}:
\begin{enumerate}
	\item For the reference set of parameters, there exists $l>0$ and $r>0$ such that $\omega_0-\omega_S+\alpha^*(\phi)>0$ for $\phi \in [-l,0)$, and $\omega_0-\omega_S+\alpha^*(\phi)<0$ for $\phi \in (0,r]$, and $\omega_0-\omega_S+\alpha^*(0)=0$.
\end{enumerate}
We pose the following conjectures for the \textit{Recovery via Ergodic Problem}:
\begin{enumerate}
	\setcounter{enumi}{1}
	\item If $\omega_0 < \omega_S$, then $f_{|p|} \geq f_{-|p|}$. In other words, the recovery cost is larger for the eastward trip. Similarly, if $\omega_0 > \omega_S$, then $f_{|p|} \leq f_{-|p|}$.
\end{enumerate}
When the rest of the parameters remain at the reference values, we conjecture the following behavior as we change one parameter at a time for the \textit{Recovery via Ergodic Problem}:
\begin{enumerate}
	\setcounter{enumi}{2}
	\item Changing $p$:
	\begin{enumerate}
		\item $f_p$ increases with $|p|$.
		\item For $p/\omega_S\in(0,12)$, $f_p>f_{-p}$.
	\end{enumerate}
	\item Changing $\omega_0$:
	\begin{enumerate}
		\item There is a threshold $\Omega^*$ such that if $|\omega_0-\omega_S|<\Omega^*$, oscillators will phase advance after traveling east and phase delay after traveling west. If $|\omega_0-\omega_S|>\Omega^*$, the oscillators will phase delay if $\omega_0<\omega_S$ and phase advance if $\omega_0>\omega_S$.
		\item $f_p$ increases with $|\omega_0-\omega_S|$.
	\end{enumerate}
	\item Changing $\sigma$:
	\begin{enumerate}
		\item $\tau^W_p \rightarrow 0$ and  $\tau^z_p \rightarrow 0$ as $\sigma \rightarrow \infty$.
		\item $f_p$ increases with $\sigma$.
	\end{enumerate}
	\item Changing $K$:
	\begin{enumerate}
		\item $\tau^W_p$ and $\tau^z_p$ decrease with $K$.
		\item $f_p$ increases with $K$.
	\end{enumerate}
	\item Changing $F$:
	\begin{enumerate}
		\item $\tau^W_p$ and $\tau^z_p$ decrease with $F$.
		\item $f_p$ increases with $F$.
	\end{enumerate}
\end{enumerate}
For the \textit{Recovery Mean Field Game Problem} we pose the following conjectures:
\begin{enumerate}
	\setcounter{enumi}{7}
	\item For a given $p \in [-\pi,\pi]$ and $\epsilon^W>0$, there are regimes of the parameter space where the oscillators will $\epsilon^W$-recover after traveling to a new time zone angle $p$ for a sufficiently large $T>0$, and regimes where the oscillators will not $\epsilon^W$-recover after traveling to a new time zone angle $p$ for any $T>0$. The regimes where the oscillators do recover are characterized by small $|\omega_0-\omega_S|$, small $K$, large $F$, and large $\sigma$.
	\item $\tilde{\tau}^W_p=\tilde{\tau}^W_{-p}$ and $\tilde{\tau}^z_p=\tilde{\tau}^z_{-p}$. In other words, the recovery times are the same for eastward and westward travels.
	\item For a small $\epsilon^W>0$, the oscillators do not $\epsilon^W$-recover for sufficiently large $|\omega_0-\omega_S|$.
	\item There exists $T^*>0$ such that $\tilde{\tau}^W_p$, $\tilde{\tau}^z_p$, and $\tilde{f}_p$ are the same for any $T>T^*$.
\end{enumerate}

\section{Conclusion}\label{sec:Conclusion}
\hspace{5mm} We provided a mean field game formulation for the synchronization of SCN circadian oscillators. The long time behavior of the oscillators is described by the so called \textit{Ergodic Mean Field Game Problem}, in which the oscillators optimize over an infinite horizon. Assuming that travel is immediate and the oscillators are entrained to their time zone before travel, the transitional behavior of the oscillators while they recover from jet lag is described by either the \textit{Recovery via Ergodic Problem} or the \textit{Recovery Mean Field Game Problem}. In the \textit{Recovery via Ergodic Problem}, the oscillators adapt the control they have already learned from the \textit{Ergodic Mean Field Game Problem} to resynchronize to a new time zone. In the \textit{Recovery Mean Field Game Problem}, on the other hand, the oscillator re-optimize to find a mean field game equilibrium over a finite time horizon. A finite differences approach was implemented to solve the above problems numerically. The numerics suggest that the time to recover from jet lag is about the same for eastward and westward travels. However, the cost accrued while recovering from jet lag is larger for eastward travels. This is consistent with the experience of frequent travelers who claim that it is harder to recover from jet lag after traveling east.

\end{document}